\documentclass[10pt]{article}

\usepackage[english]{babel}
\usepackage{graphicx,epstopdf,epsfig}
\usepackage{amsfonts,epsfig,fancyhdr,graphics, hyperref,amsmath,amssymb, amsthm}

\usepackage{bbm}
\usepackage{tikz}
\usepackage{xcolor}
\usepackage{enumerate}

\providecommand{\keywords}[1]{{\textit{Keywords and phrases:}} #1}
\providecommand{\subjclass}[1]{{\textit{AMS Subject [2020]:}} #1}
\date{\today}

\newtheorem{theorem}{Theorem}[section] 
\newtheorem{lemma}[theorem]{Lemma}
\newtheorem{prop}[theorem]{Proposition}

\newtheorem{corollary}[theorem]{Corollary}
\newtheorem{remark}[theorem]{Remark}
\newtheorem{example}[theorem]{Example}

\begin{document}
\bibliographystyle{plain}
\setcounter{page}{1}
\thispagestyle{empty}

\title{Kemeny's constant and Braess cliques in graphs}

\author{Jane Breen\thanks{Faculty of Science, Ontario Tech University, Oshawa, ON L1K1J9, Canada ({jane.breen@ontariotechu.ca}).
J. Breen is supported by the Natural Sciences and Engineering Research Council of Canada (NSERC) Discovery Grant RGPIN--2021--03775.} \and Emma deBlieck\thanks{Department of Mathematics, Redeemer University, Ancaster, ON L9A 3V8, Canada ({emma.deblieck@gmail.com, kvanderm@redeemer.ca}). Supported in part by an NSERC USRA (EdB) and NSERC Discovery Grant RGPIN--2022--05137 (KVM). 
} \and Kevin N. Vander Meulen\footnotemark[2]
 }

\maketitle

\begin{abstract}
    Kemeny's constant is used as a measure of the average travel time on a graph. Braess' paradox for graphs is the observation that in some graphs, when an edge is added, Kemeny's constant increases. We introduce the notion of a Braess clique $K_\ell$, a clique that when inserted into a graph on an independent set of $\ell$ vertices, will create an increase in Kemeny's constant. In this context, a Braess edge is a Braess $K_2$. We provide examples of graphs that have a Braess $K_\ell$ for $\ell\geq 3$. We observe that almost every connected planar labelled graph has a Braess $K_\ell$ for each $\ell\geq 3$. We also explore the relationship between Braess edges and Braess cliques in graphs. 
\end{abstract}

\medskip
\subjclass{15B51, 05C50, 05C81}

\noindent
\keywords{
Accessibility index, Equitable partitions, Kemeny's constant, Random walks on graphs, Braess paradox.}

\newcommand{\K}{\mathcal{K}}

\section{Introduction}

Kemeny's constant is an increasingly-popular spectral invariant in graph theory which is used as a measure of the overall connectivity of a graph. 
Originating from Markov chain theory, Kemeny's constant captures the behaviour of a random walk on the vertices of a graph, and gives a numerical measure $\mathcal{K}(G)$ of the expected length of a random trip between two randomly-chosen vertices of the graph $G$, where the starting and ending vertices are chosen with probability proportional to their degrees. Kemeny's constant is well known to behave somewhat unexpectedly or against intuition at times. An example of this is the exhibition of \emph{Braess' paradox}. This paradox originated with regard to road network traffic in the 1920s, when it was observed that the addition of a new road to an existing road network can have an overall negative effect on the flow of traffic through the network; i.e., slowing it down. Analogously, there are instances where adding an edge to a graph $G$ can produce a new graph $\widehat{G}$ for which $\mathcal{K}(\widehat{G}) \geq \mathcal{K}(G)$, meaning that the expected length of a random trip in the new graph is longer with the additional edge than previously, or the new graph is less `well-connected' than the old one. Such a non-edge in a graph is called a \emph{Braess edge} of the graph. 

The study of Braess edges began in \cite{kirkland2016kemeny}, where it was shown that almost every tree has a Braess edge, by showing that if a tree has two pendent twin vertices, adding an edge between them increases Kemeny's constant (see Fig.~\ref{fig:small_example}). In~\cite{ciardo2020braess}, it was shown that every graph with pendent twins has a Braess edge between those twin vertices. Other families of graphs with Braess edges have been studied; in \cite{hu2019complete}, complete multipartite graphs are studied, and conditions under which a non-edge between vertices in the same partite set is a Braess edge are given, as well as a characterization of such graphs where \emph{every} non-edge is a Braess edge (these graphs are called \emph{Braess graphs}). In \cite{kim2022families}, graphs with long pendent paths are considered, and it is considered whether the addition of the edge between the endpoints of these paths increases Kemeny's constant. In \cite{jang2025kemeny}, the authors count the number of Braess edges in certain families of trees, such as paths, brooms, and spider graphs. In \cite{faught20221}, the authors consider the presence of \emph{Braess sets} in a graph, which are sets of non-edges in a graph $G$ such that the addition of all edges in these sets cause an increase in Kemeny's constant. 

In this article, we focus on Braess sets that form a clique. 
We refer to such an instance as a \emph{Braess clique}: that is, an independent set of vertices in a graph $G$ such that adding all possible edges between these vertices causes an increase in Kemeny's constant.  In general, since a complete graph has the lowest Kemeny's constant of all graphs
on a fixed number of vertices, one would expect that inserting a clique into a graph
would significantly reduce Kemeny's constant. That there exists Braess cliques
is counter-intuitive in this context, but non-trivial Braess cliques were already observed in \cite{faught20221} in the context of pendent vertices.

The goal is to further our understanding of the structures in a graph that cause Kemeny's constant to increase. %be large. 
While the existence of Braess edges is initially counter-intuitive, it can be argued that adding an edge between pendent twin vertices creates a substructure in the graph in which a random walker spends more time,  
thus increasing the overall expected length of a random trip between vertices in a graph. If we add more edges, does this amplify the same behaviour, or does the increased number of edges mitigate it?

\begin{figure}
\begin{center}
\begin{tikzpicture}[vtx/.style={circle,fill,inner sep=1.6pt},
    bubble/.style={draw,very thick,circle,minimum size=1.5cm},
    ed/.style={line width=1pt}]
    
\begin{scope}
  \node[vtx] (s1) at (0,0) {};
  \node[vtx] (s2) at (0.707, 0.707) {};
  \node[vtx] (s3) at (0.707, -0.707) {};
  \node[vtx] (s4) at (-1, 0) {};
  \draw[ed] (s4)--(s1)--(s2); 
  \draw[ed] (s1)--(s3);
  
  \node (label1) at (-0.4, -0.9) {$G$};
\end{scope}

\begin{scope}[xshift=5cm]
  \node[vtx] (c1) at (0,0) {};
  \node[vtx] (c2) at (0.707, 0.707) {};
  \node[vtx] (c3) at (0.707, -0.707) {};
  \node[vtx] (c4) at (-1, 0) {};
  \draw[ed] (c4)--(c1)--(c2); 
  \draw[ed] (c1)--(c3);
  \draw[very thick, orange, dashed] (c3)--(c2);
  
  \node (label2) at (-0.4, -0.9) {$\widehat{G}$};
\end{scope}

\end{tikzpicture}
\end{center}
\label{fig:small_example}
\caption{An example of a graph with a Braess edge: $\mathcal{K}(G) = \frac{5}{2}$ and $\mathcal{K}(\widehat{G})=\frac{61}{24}.$}
\end{figure}

The remainder of this article is arranged as follows: In Section 2, we give necessary mathematical preliminaries with background on Markov chains, the accessibility index, and Kemeny's constant. In Section 3, we discuss equitable partitions in stochastic matrices, and determine an expression for the change in Kemeny's constant when adding a clique of order $k$ to an independent set of $k$ twin vertices. In Section 4, we apply these results to two families of graphs: graphs with $k$ pendent vertices attached to some specified vertex $v$, and complete bipartite graphs. In Section 5, we discuss the interplay of Braess edges and Braess cliques, investigating whether adding $k$ edges which are individually Braess will form a Braess clique, and whether every Braess clique must contain at least one Braess edge. We provide some constructions of graph families in this section to illustrate the observations.

\section{Mathematical preliminaries}

\subsection{Markov chains}
A finite, discrete-time, time-homogeneous Markov chain is a model of a stochastic dynamical system which transitions between states from a finite state space $\{s_1, s_2, \ldots, s_n\}$ in discrete time-steps, with the additional property that the state of the system in the next time-step depends 
on the current state of the system (this is known as the \emph{Markov property}). The behaviour of such a system is described  
using the probability transition matrix, an $n\times n$ row-stochastic matrix denoted $T=[t_{ij}]$ for which $t_{ij}$ 
is the probability of transitioning to $s_j$ in a single time-step, given that the system is currently in state $s_i$. Using the Markov property and properties of matrix multiplication, 
one can see that the $(i,j)$ entry of $T^k$ is the probability of the system being in state $s_j$ in exactly $k$ steps, given that it is currently in state $s_i$. 

If the transition matrix $T$ is primitive (i.e.~there exists some positive integer $m$ such that $T^m$ is a positive matrix) then due to the Perron-Frobenius theorem, $\lim_{k\to\infty} T^k$ exists and is equal to the rank one matrix $\mathbbm{1}w^\top$, where $\mathbbm{1}$ is the all-ones vector (a right eigenvector of $T$ corresponding to the Perron value $\rho(T)=1$), and $w^\top$ is the corresponding left Perron vector, normalized so that it sums to 1 and can be interpreted as a probability distribution. Indeed, since each row of $T^k$ converges to this distribution $w^\top$, this vector is interpreted as the long-term distribution of the Markov chain, and is named the \emph{stationary distribution vector} of the chain. That is, $w_i$ is interpreted as the long-term probability of the chain occupying state $s_i$. Note that in the case that $T$ is irreducible but not primitive, there is some periodic behaviour evident in the system, and while convergence to $\mathbbm{1}w^\top$ does not occur, the stationary distribution vector still exists and is unique, and instead may be interpreted entry-wise as the long-term proportion of time spent in each state. For more details on this distinction, see \cite{seneta}; note that in this article, when considering random walks on graphs, the only graphs on which a random walk exhibits periodic behaviour are bipartite graphs.

The short-term behaviour of a Markov chain is quantified using the \emph{mean first passage times}, denoted $m_{i,j}$ for $i,j\in\{ 1,\ldots, n\}$, representing the expected time it takes to reach state $s_j$ for the first time, given that the chain starts in state $s_i$. The matrix of mean first passage times, $M = [m_{i,j}]$, is the unique solution to the recursive-type expression
\[M = T(M-M_{dg}) + J,\]
where $A_{dg}$ denotes a diagonal matrix obtained from a matrix $A$ by zeroing out all off-diagonal entries, and $J=J_n$ denotes a square all-ones matrix of order $n$. Note that we take the convention that $m_{i,i}$ denotes the \emph{mean first return time} to state $s_i$, in which case it is known that 
$m_{i,i} = \frac{1}{w_i}$. 

While several formulas exist for the computation of mean first passage times, both individually and for the mean first passage matrix as a whole, in this article we will make use of the \emph{group inverse} of the singular matrix $I-T$. For any complex $n\times n$ singular matrix $A$ for which $0$ is a semisimple eigenvalue (i.e. the algebraic and geometric multiplicities coincide), the group inverse of $A$ is the unique matrix $X$ which satisfies the following:
\[AXA = A; \quad XAX = X; \quad AX=XA.\]
The group inverse of $A$ is denoted $A^\#$. In this article we consider $(I-T)^\#$, from which, as stated by Meyer in \cite{meyer1975role}, ``the answer to every important question about the [Markov] chain can be obtained''. For example, the mean first passage matrix may be obtained as 
\[M = (I-(I-T)^\# + J[(I-T)^\#]_{dg})W^{-1},\]
where $W$ is the diagonal matrix whose $i^{th}$ diagonal entry is $w_i$.

Let $T$ be an irreducible transition matrix, with stationary vector $w$ and mean first passage matrix $M=[m_{i,j}]$. Fix an index $j$, and consider the quantity
\[\alpha_T(j) = \sum_{\substack{i=1\\i\neq j}}^n w_im_{i,j}.\]
This is known as the \emph{accessibility index} for state $s_j$, as it can be interpreted as the expected length of a random trip in the Markov chain, starting from stationarity, that terminates at state $s_j$. This was first considered in \cite{kirkland2016randomwalk}, and it was noted that it is related to the concept of \emph{random walk centrality} (see \cite{noh2004random}).

Now fix an index $i$, and consider the analogous expression 
\[\kappa_T(i) = \sum_{\substack{j=1\\j\neq i}}^n w_jm_{i,j}.\]
This admits an interpretation in terms of the expected length of a random trip in the Markov chain with fixed starting state $s_i$, and destination state chosen randomly with respect to the stationary distribution. Remarkably, this was shown in \cite{kemenysnell} to be independent of $i$; that is, the value of $\kappa_T(i)$ is constant, $i=1, \ldots, n$. As such, it is named \emph{Kemeny's constant}, and denoted by $\mathcal{K}(T)$. Note that since $\sum_i w_i = 1$, one can manipulate the above expression and write 
\[\mathcal{K}(T) = \sum_{\substack{i, j=1 \\j\neq i}}^n w_i m_{i,j} w_j,\]
which allows the interpretation of Kemeny's constant in terms of the expected length of a random trip in the chain, where both starting state and destination state are chosen randomly (with respect to the stationary distribution). As such, Kemeny's constant can be seen as a measure of overall connectivity of a graph. It can be observed that Kemeny's constant is any entry of the vector $Mw - \mathbbm{1}$, and the vector of accessibility indices $\alpha^\top$ is given by $w^\top M - \mathbbm{1}^\top$. In \cite{kirkland2016randomwalk}, it was observed that this allows for the accessibility index vector to be seen as a ``partition'' of Kemeny's constant, since 
\begin{equation}\label{eq:alphaw}
\alpha^\top w = w^\top Mw - 1 = \mathcal{K}(T).
\end{equation}

The following theorem gives a spectral expression for Kemeny's constant.
\begin{theorem}[\cite{levene2002kemeny}]
Let $T$ be the transition matrix of an irreducible Markov chain with eigenvalues $1=\lambda_1,\lambda_2,\ldots, \lambda_n$, then
\begin{equation}\label{eq:Keig}
\mathcal{K}(T) = \sum_{j=2}^{n} \frac{1}{1-\lambda_j}.
\end{equation}
\end{theorem}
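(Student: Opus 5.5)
The approach is to express Kemeny's constant through the group inverse $Q=(I-T)^\#$ and then read off its trace spectrally. From the mean first passage formula
\[M=(I-Q+JQ_{dg})W^{-1},\]
Kemeny's constant is any entry of $Mw-\mathbbm{1}$. Since $W^{-1}w=\mathbbm{1}$, we get
\[Mw=\mathbbm{1}-Q\mathbbm{1}+JQ_{dg}\mathbbm{1}.\]
The group inverse commutes with $I-T$, and $(I-T)\mathbbm{1}=0$. Hence $Q\mathbbm{1}=Q(I-T)Q\mathbbm{1}=Q^2(I-T)\mathbbm{1}=0$. Therefore
\[Mw-\mathbbm{1}=JQ_{dg}\mathbbm{1}=\operatorname{tr}(Q)\,\mathbbm{1},\]
so $\mathcal{K}(T)=\operatorname{tr}\big((I-T)^\#\big)$. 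This also re-confirms that $\kappa_T(i)$ is independent of $i$.

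Next we compute the trace of $Q$ from its eigenvalues. Write the Jordan form $I-T=S\,\mathrm{diag}(0,B)\,S^{-1}$. Here $0$ is a simple eigenvalue, since $T$ is irreducible and Perron--Frobenius makes $1$ a simple eigenvalue of $T$. The block $B$ is invertible, with eigenvalues $1-\lambda_2,\ldots,1-\lambda_n$, all nonzero. One checks directly that $X=S\,\mathrm{diag}(0,B^{-1})\,S^{-1}$ satisfies the three defining equations $AXA=A$, $XAX=X$ and $AX=XA$. By uniqueness of the group inverse, $X=Q$. Taking traces gives
\[\operatorname{tr}Q=\sum_{j=2}^n\frac{1}{1-\lambda_j},\]
which is \eqref{eq:Keig}.

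The main point needing care is the derivation that the given formula for $M$ yields $Mw-\mathbbm{1}=\operatorname{tr}(Q)\mathbbm{1}$. This relies on $Q\mathbbm{1}=0$, which uses the commuting property and the fact that $\mathbbm{1}$ spans the null space of $I-T$. We also need the simplicity of the eigenvalue $1$, including in the periodic irreducible case. That case is still covered, because irreducibility alone guarantees $1$ is a simple eigenvalue, and no eigenvalue $\lambda_j$ with $j\ge2$ equals $1$. So every term $\frac{1}{1-\lambda_j}$ is well-defined, even when some $\lambda_j=-1$.
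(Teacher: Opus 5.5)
The paper does not prove this statement; it quotes it from Levene and Loizou and uses it as a black box, so there is no in-paper argument to compare with. Your proof is correct and complete. It relies only on facts the paper sets up in Section 2: Meyer's formula for $M$, the observation that $\mathcal{K}(T)$ is any entry of $Mw-\mathbbm{1}$, and uniqueness of the group inverse. The identity $(I-T)^\#\mathbbm{1}=0$ does follow from $XAX=X$ and $AX=XA$. Algebraic simplicity of the eigenvalue $0$ of $I-T$ then makes $S\,\mathrm{diag}(0,B^{-1})\,S^{-1}$ the group inverse, which gives the trace formula.

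A more classical route uses the Kemeny--Snell fundamental matrix $Z=(I-T+\mathbbm{1}w^\top)^{-1}$. From the analogous formula for $M$ one gets $\mathcal{K}(T)=\operatorname{tr}(Z)-1$, with $Z\mathbbm{1}=\mathbbm{1}$ playing the role of your $(I-T)^\#\mathbbm{1}=0$. Since $\mathbbm{1}w^\top$ is the spectral projector for the eigenvalue $1$, we have $I-T+\mathbbm{1}w^\top=S\,\mathrm{diag}(1,B)\,S^{-1}$, so $Z$ has eigenvalues $1$ and $(1-\lambda_j)^{-1}$. Because $Z=(I-T)^\#+\mathbbm{1}w^\top$, the two arguments differ by a rank-one matrix of trace one. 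The fundamental-matrix version needs only an ordinary inverse. Yours stays inside the group-inverse framework the paper uses later (Proposition~\ref{prop:acc_quotient}, Theorem~\ref{thm:braess_clique_diff_acc}) and shows directly why the zero eigenvalue is simply dropped.

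Your remark that only simplicity of the eigenvalue $1$ is needed is the right point for the irreducibility hypothesis. It means periodic chains, such as random walks on bipartite graphs, are covered.
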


\subsection{Graph theory and random walks on graphs}
A \emph{graph} $G$ is a pair $(V(G), E(G))$ referred to as the vertex set and edge set of $G$, respectively. The \emph{order} of a graph is $|V(G)|$ and the size is $|E(G)|$. In an undirected graph, an edge is an unordered set $\{u, v\}$, where $u, v \in V(G)$. If $\{u,v\} \in E(G)$, we say that $u$ is \emph{adjacent} to $v$ and we also say that $u$ is a \emph{neighbour} of $v$, and vice versa. 
The \emph{degree} of a vertex $v$, denoted by $\deg(v)$ or $\deg_G(v)$, is the number of neighbours of $v$ in $G$. Given  $S\subseteq V(G)$, then $S$ forms a \emph{clique} if $\{u, v\}\in E(G)$ for all $u,v\in S$, and  $S$ is  an \emph{independent set}, or forms a \emph{coclique}, if for all $u, v\in S$, $\{u,v\}\notin E(G)$. A \emph{complete bipartite graph}, denoted $K_{a, b}$, is a graph consisting of two cocliques $A$ and $B$ of order $a$ and $b$, respectively, such that for all $u\in A$, $v\in B$, $\{u,v\} \in E(K_{a, b})$.  A \emph{star} graph $S_n$ on $n$ vertices is a complete bipartite graph $K_{1,n-1}$ with the vertex of degree $n-1$ called the \emph{centre} vertex. Other basic concepts of graph theory can be found in \cite{CL}.

Given a connected graph $G$ with vertices labelled $v_1, \ldots, v_n$, a random walk on the graph $G$ is a Markov chain for which the states are the vertices of $G$, and the transition probabilities are 
\[t_{i,j} = \left\{\begin{array}{cc} \frac{1}{\deg(v_i)}, & \mbox{if } \{v_i, v_j\} \in E(G);\\
0, & \mbox{otherwise.}\end{array}\right.\]
That is, the transition matrix is given by $T = D^{-1}A$, where $A$ is the $(0,1)$ adjacency matrix of $G$, and $D$ is the diagonal matrix of vertex degrees. This has the interpretation of modelling the movement of a random walker who, at any given time, occupies a vertex $v_i$ of $G$, and in the next time-step chooses to move to a neighbour of $v_i$ uniformly at random.  In this context, the stationary vector $w_G$ is well-known to be proportional to the degree vector of $G$ (in particular, $w_i=\frac{\deg(v_i)}{2|E(G)|}$), the accessibility index $\alpha_G(v)$ is a measure of how easily the vertex $v$ is reached in a random walk starting from stationarity, and Kemeny's constant is a measure of the length of a random trip in a graph. Note that we denote Kemeny's constant for the graph as $\mathcal{K}(G) = \mathcal{K}(D^{-1}A)$.

With the combination of the useful interpretation of $\mathcal{K}(G)$ in terms of the expected length of a random trip in the graph and its expression as a spectral parameter, Kemeny's constant has become a subject of great interest and investigation in the field of spectral graph theory. 

The connection of Kemeny's constant with \emph{resistance distances} was first made in general in \cite{wang2017kemeny}, where it was shown that for a graph with adjacency matrix $A$, degree vector $d$ with $d_i=\deg(v_i)$, and number of edges $m$,
\[\mathcal{K}(D^{-1}A) = \frac{d^\top R d}{4m},\] 
where $R=[r_{i,j}]$ is a matrix of resistance distances. For a given vertex $v$ of a connected graph $G$, the \emph{moment} of $v$ in $G$ is 
\begin{equation}\label{eq:moment}
\mu_G(v) = \sum_{u\in V(G)} \deg(u) r_{u, v}.
\end{equation}

While the resistance matrix $R$ may be written in terms of the pseudoinverse of the Laplacian matrix of the graph, a more structural result relates $r_{i,j}$ to a number of spanning trees and forests of $G$ of a certain type.

\begin{prop}[\cite{kirkland2016kemeny}]
Let $G$ be a connected, undirected graph on $n$ vertices with degree vector $d$. Let $m$ be the number of edges of $G$, and let $\tau$ be the number of spanning trees of $G$. For each $i,j \in\{ 1, \ldots, n\}$ with $i\neq j$, let $f_{i,j}$ denote the number of spanning 2-forests separating $v_i$ and $v_j$; that is, the number of spanning forests of $G$ consisting of two trees, one containing $v_i$ and the other containing $v_j$. Set $f_{i,i} = 0$. Letting $F = [f_{i,j}]$, 
\[\mathcal{K}(G) = \frac{d^\top F d}{4m\tau}.\]
\end{prop}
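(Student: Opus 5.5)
Our plan is to start from the resistance formula of \cite{wang2017kemeny}, $\mathcal{K}(G)=\frac{d^\top R d}{4m}$, and rewrite each resistance distance combinatorially in terms of spanning trees and 2-forests. The key classical fact we will invoke is Kirchhoff's matrix-tree theorem in its all-minors form: for $i\neq j$,
\[
r_{i,j}=\frac{f_{i,j}}{\tau}.
\]
Here $\tau$ is the number of spanning trees of $G$, and $f_{i,j}$ is the number of spanning 2-forests with $v_i$ and $v_j$ in different components. Since $r_{i,i}=0=f_{i,i}$, this gives the matrix identity $R=\frac{1}{\tau}F$. Substituting it into the resistance formula yields $\mathcal{K}(G)=\frac{d^\top F d}{4m\tau}$ immediately.

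The work therefore lies in justifying $r_{i,j}=f_{i,j}/\tau$, and we will do this as follows.

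First, let $L=D-A$ be the Laplacian. Recall that $r_{i,j}=(e_i-e_j)^\top L^{+}(e_i-e_j)$. Equivalently, $r_{i,j}=\det L(\{i,j\})/\det L(\{i\})$, where $L(S)$ denotes $L$ with the rows and columns indexed by $S$ deleted.

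To see the equivalence, ground vertex $v_j$. The reduced Laplacian $L(\{j\})$ is then invertible, and $r_{i,j}$ equals the $(i,i)$ entry of $L(\{j\})^{-1}$. By Cramer's rule this entry is $\det L(\{i,j\})/\det L(\{j\})$.

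Next, apply the matrix-tree theorem to the denominator: $\det L(\{j\})=\tau$. Then apply the all-minors matrix-tree theorem to the numerator: the principal minor $\det L(\{i,j\})$ counts spanning forests with exactly two trees, one rooted at $v_i$ and one at $v_j$. That count is $f_{i,j}$.

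The main obstacle is this all-minors step. If we want to avoid citing it, we will argue directly by contraction. Let $G/\{v_i,v_j\}$ be the graph obtained by identifying $v_i$ and $v_j$. Its Laplacian, with the merged vertex deleted, is exactly $L(\{i,j\})$. Hence $\det L(\{i,j\})=\tau(G/\{v_i,v_j\})$.

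Spanning trees of the merged graph correspond bijectively to spanning 2-forests of $G$ separating $v_i$ from $v_j$. Loops never occur in a spanning tree, and parallel edges are counted with multiplicity, which matches edges of $G$. This completes the identity and hence the proposition.
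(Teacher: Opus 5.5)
Your proof is correct, and it follows the route the paper itself points to. The paper gives no proof, citing Kirkland--Zeng, but it places the proposition between the resistance formula $\mathcal{K}(G)=d^\top R d/(4m)$ of \cite{wang2017kemeny} and the remark from \cite{chebotarev2020hitting} that $r_{i,j}=f_{i,j}/\tau$; substituting $R=\tfrac{1}{\tau}F$ is exactly your argument. Your grounding and Cramer's-rule step, combined with the contraction identity $\det L(\{i,j\})=\tau(G/\{v_i,v_j\})$ and the forest bijection, gives a sound self-contained proof of that remark. Switching between $\det L(\{i\})$ and $\det L(\{j\})$ is harmless, since both equal $\tau$.
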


It was observed in \cite{chebotarev2020hitting} that $r_{i,j} = \frac{f_{i,j}}{\tau}$, and so $\mu_G(v)$ can also be written 
\begin{equation}\label{eq:momentTree}
\mu_G(v) = \sum_{u\in V(G)} \frac{\deg(u)}{\tau} f_{u, v}.
\end{equation} 
Both the resistance distance formula and the spanning 2-forest formula have been used extensively to investigate the value of Kemeny's constant, particularly in the context of highly-structured graphs such as trees (see \cite{ciardo2022kemeny, kirkland2016kemeny}) threshold graphs (see \cite{breen2025threshold}), and flower graphs (see \cite{faught2020resistance}), to name a few. These expressions, while complex, also lend themselves well to situations in which small changes are made to the graph, such as adding or removing an edge, and investigating the change in Kemeny's constant (see \cite{kirkland2016kemeny, ciardo2020braess}). Finally, their construction allows for a divide-and-conquer strategy when working with graphs which are easily decomposed, such as graphs with cut vertices or bridges (see \cite{faught20221, kim2022families, Breen2022Bridges}). We highlight one such result here. A \emph{1-separation} formula was developed in \cite{faught20221} for calculating Kemeny's constant if a graph has a certain type of cut-vertex. In particular, let $G_1 \oplus_v G_2$ be the graph obtained from two disjoint connected graphs $G_1$ and $G_2$ by identifying one vertex of each graph with the same label $v$.

\begin{theorem}[\cite{faught20221}]\label{thm:1sep}  If $G=G_1 \oplus_v G_2$ for some connected graphs $G_1$ and $G_2$ with $m_1$ and $m_2$ edges respectively, then
\begin{equation*} %\label{eq:1sepK}
\mathcal{K}(G)=\frac{m_1(\mathcal{K}(G_1)+\mu_{G_2}(v))+m_2(\mathcal{K}(G_2)+\mu_{G_1}(v))}{m_1+m_2}.\end{equation*}
\end{theorem}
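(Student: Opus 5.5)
We prove the 1-separation formula of Theorem~\ref{thm:1sep} from the resistance-distance expression $\mathcal{K}(G)=\frac{d^\top R d}{4m}$ with $m=m_1+m_2$. Two structural facts about $G=G_1\oplus_v G_2$ drive the argument. First, resistance distances inside each block are unchanged: for $x,y\in V(G_i)$ we have $r^G_{x,y}=r^{G_i}_{x,y}$, because no current can flow through the other block when it is attached only at $v$. Equivalently, every spanning tree (resp. 2-forest separating $x,y$) of $G$ is a spanning tree of $G_{3-i}$ together with a spanning tree (resp. separating 2-forest) of $G_i$, so the counts in $r_{x,y}=f_{x,y}/\tau$ factor and the factor from $G_{3-i}$ cancels. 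Second, for $x\in V(G_1)$ and $y\in V(G_2)$, resistances add in series through the cut vertex: $r_{x,y}=r_{x,v}+r_{v,y}$.

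The degrees split as follows. Write $d=d^{(1)}+d^{(2)}$, where $d^{(i)}$ is the degree vector of $G_i$ extended by zero to the other block. At $v$ the degrees add, which is exactly $\deg_G(v)=\deg_{G_1}(v)+\deg_{G_2}(v)$. Then
\[d^\top R d = d^{(1)\top}Rd^{(1)} + d^{(2)\top}Rd^{(2)} + 2\,d^{(1)\top}Rd^{(2)}.\]
By the first fact, $d^{(i)\top}Rd^{(i)}=d^{(i)\top}R_{G_i}d^{(i)}=4m_i\mathcal{K}(G_i)$.

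For the cross term, apply the series law. This is valid even when $x=v$ or $y=v$, since $r_{v,v}=0$. It gives
\[d^{(1)\top}Rd^{(2)}=\sum_{x\in G_1}\sum_{y\in G_2}\deg_{G_1}(x)\deg_{G_2}(y)\,(r_{x,v}+r_{v,y}).\]
Since $\sum_y\deg_{G_2}(y)=2m_2$ and $\sum_x\deg_{G_1}(x)=2m_1$, this equals $2m_2\,\mu_{G_1}(v)+2m_1\,\mu_{G_2}(v)$, by the definition \eqref{eq:moment} of the moment. Therefore
\[\mathcal{K}(G)=\frac{4m_1\mathcal{K}(G_1)+4m_2\mathcal{K}(G_2)+4m_2\mu_{G_1}(v)+4m_1\mu_{G_2}(v)}{4(m_1+m_2)},\]
which is exactly the stated formula.

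The main obstacle is justifying the two resistance facts rigorously. I would do this with the spanning-forest characterization $r_{x,y}=f_{x,y}/\tau$ quoted from \cite{chebotarev2020hitting}. Since every cycle lies inside a single block, $\tau(G)=\tau(G_1)\tau(G_2)$. A 2-forest of $G$ restricts to each block, and exactly one of the two restrictions is a 2-forest while the other is a spanning tree. This gives $f^G_{x,y}=f^{G_1}_{x,y}\tau(G_2)$ when $x,y\in G_1$. When $x\in G_1$ and $y\in G_2$, the separation happens either in $G_1$ between $x$ and $v$ or in $G_2$ between $v$ and $y$, and these cases are disjoint, so $f_{x,y}=f^{G_1}_{x,v}\tau_2+\tau_1 f^{G_2}_{v,y}$. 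Dividing by $\tau(G)$ yields both facts.
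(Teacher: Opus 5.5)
Your proof is correct. The paper does not prove this theorem; it quotes it from \cite{faught20221}, and your argument follows essentially the resistance-distance route of that source: write $\mathcal{K}(G)=d^\top Rd/(4m)$, split the degree vector at the cut vertex $v$, and use that resistances are inherited within each block and add in series through $v$. Your spanning-forest justification of those two resistance facts (via $r_{x,y}=f_{x,y}/\tau$, $\tau(G)=\tau(G_1)\tau(G_2)$, and the edge-count observation that a spanning 2-forest restricts to a spanning tree in one block and a 2-forest in the other) is sound, and it makes the argument self-contained using only tools already quoted in the paper.
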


It is shown in \cite{Breen2022Bridges} that for all $v\in V(G)$, 
\begin{equation}\label{eq:amu}\mu_G(v) = \alpha_G(v) + \mathcal{K}(G),\end{equation}
which allows the following restatement of the above theorem.

\begin{corollary}\label{cor:1sepA}
If $G=G_1 \oplus_v G_2$ for some connected graphs $G_1$ and $G_2$ with $m_1$ and $m_2$ edges respectively, then
\begin{equation*}%\label{eq:1sepA}
\mathcal{K}(G)= \mathcal{K}(G_1) + \mathcal{K}(G_2) + \frac{m_1\alpha_{G_2}(v)+m_2\alpha_{G_1}(v)}{m_1+m_2}.\end{equation*}
\end{corollary}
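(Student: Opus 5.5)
The corollary should follow from Theorem~\ref{thm:1sep} by substituting the identity \eqref{eq:amu} for each moment and simplifying.

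Write $m=m_1+m_2$. By \eqref{eq:amu} applied to $G_2$ at $v$, we have $\mu_{G_2}(v)=\alpha_{G_2}(v)+\mathcal{K}(G_2)$. Applied to $G_1$ at $v$, it gives $\mu_{G_1}(v)=\alpha_{G_1}(v)+\mathcal{K}(G_1)$. Here it matters that $v$ is a vertex of both $G_1$ and $G_2$, and that each moment, accessibility index and Kemeny's constant is computed in its own graph; this is exactly the setting in which \eqref{eq:amu} holds for an arbitrary connected graph.

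Substituting into Theorem~\ref{thm:1sep}, the numerator becomes
\[
m_1\bigl(\mathcal{K}(G_1)+\mathcal{K}(G_2)+\alpha_{G_2}(v)\bigr)+m_2\bigl(\mathcal{K}(G_2)+\mathcal{K}(G_1)+\alpha_{G_1}(v)\bigr).
\]
The terms involving Kemeny's constants group as $(m_1+m_2)\bigl(\mathcal{K}(G_1)+\mathcal{K}(G_2)\bigr)$. Dividing by $m_1+m_2$ leaves $\mathcal{K}(G_1)+\mathcal{K}(G_2)$ plus the weighted term $\frac{m_1\alpha_{G_2}(v)+m_2\alpha_{G_1}(v)}{m_1+m_2}$, which is the claimed formula.

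There is no real obstacle. The only point needing care is pairing the indices correctly: $m_1$ multiplies the moment computed in $G_2$, and $m_2$ multiplies the moment computed in $G_1$, as in the statement of Theorem~\ref{thm:1sep}. This pairing is why the accessibility indices appear "crossed" in the corollary.
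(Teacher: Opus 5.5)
Your proposal is correct and follows the paper's route: the paper obtains the corollary as a direct restatement of Theorem~\ref{thm:1sep}, substituting $\mu_{G_i}(v)=\alpha_{G_i}(v)+\mathcal{K}(G_i)$ from \eqref{eq:amu} for each moment, and your substitution and grouping of the Kemeny terms are accurate.
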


Further, the difference in Kemeny's constant when adding 
an edge to a graph with a 1-separation (described in \cite[Theorem 4.1]{jang2025kemeny}) can be formulated more simply in terms of accessibility index:

\begin{theorem}\cite{jang2025kemeny}
Let $G=G_1 \oplus_v G_2$ for some connected graphs $G_1$ and $G_2$ with $m_1$ and $m_2$ edges respectively, with $m=m_1+m_2$. If $\widetilde{G}$ (resp. $\widetilde{G}_1$) is the graph obtained from $G$ (resp. $G_1$) by inserting an edge in $G_1$, then 
\[K(\widetilde{G})-\K(G)=
\K(\widetilde{G}_1)-\K(G_1) + \frac{m_2}{m+1}\left[\alpha_{\widetilde{G}_1}(v) - \alpha_{G_1}(v)\right] + \frac{m_2}{m(m+1)} \left[\alpha_{G_1}(v) + \alpha_{G_2}(v)\right].\]
\end{theorem}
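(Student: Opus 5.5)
The plan is to apply Corollary~\ref{cor:1sepA} twice, once to $G$ and once to $\widetilde{G}$, and subtract. The structural point is that inserting an edge inside $G_1$ preserves the 1-separation. Since $v$ is a vertex of $G_1$, we have $\widetilde{G}=\widetilde{G}_1\oplus_v G_2$. Here $\widetilde{G}_1$ is connected with $m_1+1$ edges, and $G_2$ is unchanged with $m_2$ edges, so the total edge count is $m+1$.

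Write $a_1=\alpha_{G_1}(v)$, $\tilde a_1=\alpha_{\widetilde{G}_1}(v)$ and $a_2=\alpha_{G_2}(v)$. Corollary~\ref{cor:1sepA} then gives
\[
\K(G)=\K(G_1)+\K(G_2)+\frac{m_1a_2+m_2a_1}{m},
\qquad
\K(\widetilde{G})=\K(\widetilde{G}_1)+\K(G_2)+\frac{(m_1+1)a_2+m_2\tilde a_1}{m+1}.
\]
On subtracting, the $\K(G_2)$ terms cancel and the difference $\K(\widetilde{G}_1)-\K(G_1)$ appears directly. The remaining work is to regroup the accessibility-index terms.

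For the terms in $a_1$ and $\tilde a_1$, I would rewrite
\[
\frac{m_2\tilde a_1}{m+1}-\frac{m_2a_1}{m}
=\frac{m_2}{m+1}(\tilde a_1-a_1)+m_2a_1\left(\frac1{m+1}-\frac1m\right).
\]
This produces the bracket $\frac{m_2}{m+1}\bigl[\alpha_{\widetilde{G}_1}(v)-\alpha_{G_1}(v)\bigr]$ from the statement, together with a leftover term with coefficient $\frac{m_2}{m(m+1)}$ multiplying $a_1$.

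For the $a_2$ terms, the coefficient is
\[
\frac{m_1+1}{m+1}-\frac{m_1}{m}=\frac{m-m_1}{m(m+1)}=\frac{m_2}{m(m+1)}.
\]
Collecting both pieces gives a final term of the form $\frac{m_2}{m(m+1)}\bigl[\alpha_{G_1}(v)\pm\alpha_{G_2}(v)\bigr]$, which is what the statement asserts up to sign.

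The main obstacle is the sign of this last combination, so I would check it independently of the algebra. The leftover $a_1$ term carries the factor $\frac1{m+1}-\frac1m<0$, while the $a_2$ term carries a positive coefficient. The straightforward computation therefore yields $\frac{m_2}{m(m+1)}\bigl[\alpha_{G_2}(v)-\alpha_{G_1}(v)\bigr]$, which does not match the plus sign as printed. Before concluding, I would look for a convention that could reconcile the two: for example, a different labelling of which accessibility index is meant, or a reformulation of the 1-separation formula in terms of moments $\mu$ via \eqref{eq:amu}.

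I would also test a tiny case numerically. Take $G_1=P_3$ with $v$ an endpoint, add the edge closing the triangle, and take $G_2=K_2$. This small check decides whether the printed plus sign can be correct. If it cannot, then the argument above proves the statement with $\alpha_{G_2}(v)-\alpha_{G_1}(v)$ in the final bracket, and as printed the statement cannot be established.
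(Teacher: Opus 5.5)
Your derivation is correct and follows the natural route. The paper gives no proof of its own: it only cites \cite[Theorem~4.1]{jang2025kemeny}, recast through \eqref{eq:amu}, and that recasting is exactly your subtraction of Corollary~\ref{cor:1sepA} applied to $\widetilde{G}=\widetilde{G}_1\oplus_v G_2$ and to $G$.

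Your suspicion about the sign is justified, and no convention or rewriting in terms of moments can rescue the printed version. Its right-hand side exceeds yours by exactly $\frac{2m_2\,\alpha_{G_1}(v)}{m(m+1)}$. This is strictly positive as soon as $m_2\ge 1$, since $\alpha_{G_1}(v)\ge\frac12$ by Lemma~\ref{lem:alpha}.

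Your test case settles it. Take $G_1=P_3$ with $v$ an endpoint, $\widetilde{G}_1=K_3$ and $G_2=K_2$. Then $m=3$, $m_2=1$, $\K(\widetilde{G}_1)-\K(G_1)=\frac43-\frac32=-\frac16$, $\alpha_{G_1}(v)=\frac52$, $\alpha_{\widetilde{G}_1}(v)=\frac43$ and $\alpha_{G_2}(v)=\frac12$. Here $G=P_4$ and $\widetilde{G}$ is a triangle with a pendent vertex at $v$, so $\K(\widetilde{G})-\K(G)=\frac{61}{24}-\frac{19}{6}=-\frac58$. Your formula gives $-\frac{4}{24}-\frac{7}{24}-\frac{4}{24}=-\frac58$, while the printed one gives $-\frac{4}{24}-\frac{7}{24}+\frac{6}{24}=-\frac{5}{24}$.

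So the true statement, which your argument proves, is
\[
\K(\widetilde{G})-\K(G)=\K(\widetilde{G}_1)-\K(G_1)+\frac{m_2}{m+1}\bigl[\alpha_{\widetilde{G}_1}(v)-\alpha_{G_1}(v)\bigr]+\frac{m_2}{m(m+1)}\bigl[\alpha_{G_2}(v)-\alpha_{G_1}(v)\bigr].
\]
The misprint does not affect the rest of the paper, since this theorem is never invoked later.
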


\begin{lemma}\label{lem:alpha}
    For any vertex $v$ in any connected graph $G$, 
    $$\alpha_G(v) \geq \frac{1}{2}.$$ Equality is obtained if and only if $G$ is a star graph $K_{1,n-1}$ and deg($v)=1$.
\end{lemma}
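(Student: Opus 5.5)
The plan is to bound each term of $\alpha_G(v)=\sum_{u\neq v} w_u m_{u,v}$ from below using only the definitions, with no spectral or resistance machinery, and then read off the equality case. Throughout, write $m=|E(G)|$ and $d_u=\deg(u)$, so that $w_u=d_u/(2m)$.

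\emph{Step 1 (trivial bound on passage times).} For every $u\neq v$ we have $m_{u,v}\ge 1$, because at least one step is needed to reach $v$ from $u$. Hence
\[\alpha_G(v)\;\ge\;\sum_{u\neq v} w_u \;=\; 1-w_v \;=\; 1-\frac{d_v}{2m}.\]

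\emph{Step 2 (degree bound).} Each edge contributes at most once to $d_v$, so $d_v\le m$. Therefore $1-\frac{d_v}{2m}\ge \frac12$, which gives $\alpha_G(v)\ge \frac12$.

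\emph{Step 3 (equality).} Equality forces equality in both steps.
\begin{enumerate}[(i)]
\item Equality in Step 2 means $d_v=m$, so every edge of $G$ is incident to $v$. Since $G$ is connected, $G$ is the star $K_{1,n-1}$ with $v$ adjacent to all other vertices, and every vertex $u\neq v$ is pendent, i.e.\ $\deg(u)=1$.
\item In that star each $u\neq v$ has $v$ as its only neighbour, so $m_{u,v}=1$ and Step 1 is then automatically tight.
\end{enumerate}
Conversely, in $K_{1,n-1}$ the computation $\alpha=\sum_{u\ne v}\frac{1}{2(n-1)}\cdot 1=\frac12$ shows that equality is attained.

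This is where I expect the main obstacle: matching the equality case to the wording of the statement. The argument above shows that equality holds exactly when $G$ is a star and the vertices other than $v$ have degree $1$. The degree-one condition therefore attaches to the vertices $u\neq v$, and this is the reading I would adopt for ``$\deg(v)=1$'' in the statement. If ``$\deg(v)=1$'' is taken literally as a condition on the distinguished vertex $v$, it is compatible with equality only when $n=2$, where $K_{1,1}$ has both vertices of degree $1=n-1$. For $n\ge3$ and $v$ a leaf of the star, $m_{c,v}>1$ for the centre $c$ and $w_c=\frac12$, so $\alpha_G(v)>\frac12$. I would state the characterization in the precise form ``$G=K_{1,n-1}$ and every vertex other than $v$ has degree $1$'' and add this remark so that the equality case is correct.
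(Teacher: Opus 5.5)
Your proof is correct and is essentially the paper's argument. The paper also uses $m_{u,v}\ge 1$ to get $\alpha_G(v)\ge 1-\frac{d_v}{2m}$, then passes through $d_v\le n-1\le m$ where you use $d_v\le m$ directly. Your diagnosis of the equality case is also right: the paper's own proof concludes equality holds iff $m=n-1$ and $d_v=n-1$, i.e.\ $v$ is the centre of the star, so the ``$\deg(v)=1$'' in the statement is a typo for $\deg(v)=n-1$ (for $n\ge 3$ a leaf of $K_{1,n-1}$ has $\alpha=\frac{4n-7}{2}>\frac12$).
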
 

\begin{proof} Let $G$ be a connected graph with $m$ edges and $v\in V(G).$
Since $m_{ij} \geq 1$ for all $i \neq j$, 
$$\alpha_G(v) \geq \frac{1}{2m} \sum_{i \neq v} d_i = \frac{1}{2m} \left( \sum_{i=1}^n d_i - d_v \right)$$
Since $d_v \leq n-1$, 
$$\alpha_G(v) \geq \frac{1}{2m} \left( 2m - (n-1) \right) = 1 - \frac{n-1}{2m} $$
Now, since $m \geq n-1$, 
$$\frac{n-1}{2m} \leq \frac{1}{2}$$
Therefore,
$$\alpha_G(v) \geq 1 - \frac{1}{2} = \frac{1}{2}.$$
Note that equality holds throughout if and only if $m=n-1$ and $d_v=n-1$ (and $m_{iv}=1$ for all $i\neq v$). 
\end{proof}

\section{Twin vertices, equitable partitions, and the accessibility index}

In the main result of this section, Theorem~\ref{thm:braess_clique_diff_acc}, we give an expression for the change in Kemeny's constant when adding a clique of order $k$ to an independent set of $k$ twin vertices in a graph. We also discuss more general results on how $\mathcal{K}(G)$ may be computed in graphs with twin vertices. To do this, we use the concept of \emph{equitable partitions} in a matrix. 

Recall that a set of vertices $\{v_1, v_2, \ldots, v_k\}$ is a set of twin vertices if pairwise they share the same neighbour set. Furthermore, 
 in a matrix representing a graph with a set of twin vertices, one can use  equitable partitions in order to produce a quotient matrix of smaller order whose eigenvalues are among those of the original matrix. In the next lemma, we describe the general result regarding equitable partitions; this may be more familiar to the reader in the context of the adjacency matrix of a graph (see \cite[Section 2.3]{brouwer2012spectra}).
\begin{lemma}\label{lem:equitable_partitions}
Let $A$ be a square block-partitioned matrix 
\[A = \left[\begin{array}{c|c|c|c}
A_{11} & A_{12} & \cdots & A_{1m}\\\hline
A_{21} & A_{22} & \cdots & A_{2m} \\\hline
\vdots & \vdots & \ddots & \vdots \\ \hline
A_{m1} & A_{m2} & \cdots & A_{mm}\end{array}\right],\]
with the property that each block $A_{ij}$ has constant row sums $q_{ij}$. That is, $A_{ij}\mathbbm{1}_{k_j} = q_{ij}\mathbbm{1}_{k_i}$, where $k_i$ is the order of the $i^{th}$ diagonal block of $A$. Then if $\lambda\in \mathbb{C}$ is an eigenvalue of the quotient matrix $Q = [q_{ij}]$, $\lambda$ is also an eigenvalue of $A$. Furthermore, if $\mathbf{x} \in \mathbb{C}^m$ is an eigenvector of $Q$ corresponding to $\lambda$, the vector 
\[\hat{\mathbf{x}}^\top = \left[\begin{array}{c|c|c|c} x_1 \mathbbm{1}_{k_1}^\top & x_2 \mathbbm{1}_{k_2}^\top & \cdots & x_m \mathbbm{1}_{k_m}^\top\end{array}\right]\] 
is an eigenvector of $A$ corresponding to $\lambda$.
\end{lemma}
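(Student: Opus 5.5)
The plan is a direct verification that $A\hat{\mathbf{x}} = \lambda \hat{\mathbf{x}}$, computed block by block. Let $\mathbf{x}=(x_1,\ldots,x_m)^\top$ be an eigenvector of $Q$ for $\lambda$, so that $Q\mathbf{x}=\lambda \mathbf{x}$, and form $\hat{\mathbf{x}}$ as in the statement. Its $j^{th}$ block is $x_j\mathbbm{1}_{k_j}$.

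First, compute the $i^{th}$ block of $A\hat{\mathbf{x}}$. It equals
\[
\sum_{j=1}^m A_{ij}\,(x_j \mathbbm{1}_{k_j}) = \sum_{j=1}^m x_j\, q_{ij}\mathbbm{1}_{k_i} = \Big(\sum_{j=1}^m q_{ij}x_j\Big)\mathbbm{1}_{k_i} = (Q\mathbf{x})_i\,\mathbbm{1}_{k_i} = \lambda x_i \mathbbm{1}_{k_i},
\]
using the constant row sum hypothesis $A_{ij}\mathbbm{1}_{k_j}=q_{ij}\mathbbm{1}_{k_i}$. Stacking these blocks gives $A\hat{\mathbf{x}}=\lambda\hat{\mathbf{x}}$.

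Second, check that $\hat{\mathbf{x}}$ is a genuine eigenvector, i.e. nonzero. Since $\mathbf{x}\neq 0$, some $x_i\neq 0$. Every $k_i\geq 1$, so the corresponding block $x_i\mathbbm{1}_{k_i}$ is nonzero. Hence $\hat{\mathbf{x}}\neq 0$, and $\lambda$ is an eigenvalue of $A$ with eigenvector $\hat{\mathbf{x}}$. This proves both claims at once.

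An equivalent compact formulation, which I might present instead, uses the $n\times m$ characteristic matrix $P$ whose $j^{th}$ column is the indicator of block $j$. The hypothesis reads $AP=PQ$, so $AP\mathbf{x}=PQ\mathbf{x}=\lambda P\mathbf{x}$ with $\hat{\mathbf{x}}=P\mathbf{x}$. Moreover $P$ has full column rank, so $P\mathbf{x}\neq0$. There is no real obstacle here; the only point needing care is the nonvanishing of $\hat{\mathbf{x}}$, which follows from the blocks being nonempty.
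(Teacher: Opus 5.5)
Your proof is correct and is the standard argument. The paper gives no proof of its own and defers to Brouwer--Haemers (Section 2.3), which proves the result via the characteristic-matrix identity $AP=PQ$ from your compact formulation; your explicit check that $\hat{\mathbf{x}}\neq 0$, via nonempty blocks or the full column rank of $P$, is the one detail that needs stating, and you handle it.
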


Let $G$ be a graph of order $n$, and suppose that the vertices are ordered such that the first $k$ vertices are an independent set $U$ of twin vertices, that the common neighbourhood of $v_1, v_2, \ldots, v_k$ appear next in the ordering, followed by the remaining vertices in $G$. Suppose that the degree of each vertex in $U=\{v_1, v_2, \ldots, v_k\}$ is $r$. Then the 
transition matrix for the random walk on $G$ is 
\begin{equation}\label{eq:T_ind_set}
\arraycolsep=2.4pt\def\arraystretch{1.2}
T = \left[\begin{array}{c|c|c} O_{k, k} & \frac{1}{r}J_{k,r} & O_{k, n-r-k} \\[2pt]\hline
\mathbf{x}\mathbbm{1}_k^\top & A & B \\\hline
O_{n-r-k, k} & C & D \end{array}\right]
\end{equation}
where $\mathbf{x}$ is some vector, and $A, B, C, D$  matrices representing transitions between vertices in $\{v_{k+1}, \ldots, v_n\}$. 

Consider the equitable partition of this $n\times n$ matrix given by $\{\{v_1, \ldots v_k\}, \{v_{k+1}\}, \ldots \{v_{n}\}\}$ (note that the partition lines in the matrix above are not intended to represent an equitable partition, but rather the sets of vertices according to the order prescribed above). The quotient matrix is the $(n-k+1)\times(n-k+1)$ matrix 
\begin{equation}\label{eq:Q_ind_set}
Q = \left[\begin{array}{c|c|c} 0 & \frac{1}{r}\mathbbm{1}_r^\top & 0_{n-r-k}^\top \\[2pt]\hline
k\mathbf{x} & A & B \\\hline
0_{n-r-k} & C & D \end{array}\right].
\end{equation}
In the graph setting, we refer to this operation as the \emph{coalescence} of the set $U$.

In our next result, we consider the relationship between the values of Kemeny's constant for such a transition matrix and its quotient.

\begin{lemma}\label{lem:kemeny_quotient}
Let $G$ be a graph of order $n$ with an independent set of twin vertices of order $k$, with common degree $r$. Let $T$ be the transition matrix for the random walk on $G$, written in the form \eqref{eq:T_ind_set}, and let $Q$ be its quotient matrix. Let $\tilde{T}$ be the transition matrix obtained when adding all possible edges between the twin vertices, and $\tilde{Q}$ the quotient matrix of $\tilde{T}$. Then 
$\mathcal{K}(T) = \mathcal{K}(Q) + k-1 $
and
\[\mathcal{K}(\tilde{T}) = \mathcal{K}(\tilde{Q}) + \frac{(k-1)(r+k-1)}{r+k}.\]
\end{lemma}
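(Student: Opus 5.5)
The plan is to use the spectral formula \eqref{eq:Keig} and show that the spectrum of $T$ (resp.\ $\tilde T$) is the spectrum of $Q$ (resp.\ $\tilde Q$) together with $k-1$ further eigenvalues that we can compute explicitly. By Lemma~\ref{lem:equitable_partitions}, every eigenvalue of $Q$ is an eigenvalue of $T$, with eigenvectors obtained by repeating the first entry $k$ times. Since $Q$ has order $n-k+1$, the task reduces to finding $k-1$ further eigenvectors of $T$. We then need to know that these eigenvectors, together with the lifted ones, account for the full spectrum with multiplicity.

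For the remaining eigenvalues we use vectors supported on the twin set $U$. Take $\mathbf{y}=[\mathbf{z}^\top \mid 0^\top]^\top$ with $\mathbf{z}\in\mathbb{C}^k$ and $\mathbbm{1}_k^\top\mathbf{z}=0$; such $\mathbf{z}$ form a space of dimension $k-1$.
\begin{itemize}
\item For $T$ in the form \eqref{eq:T_ind_set}, the first block row gives $O\mathbf{z}=0$, and the second gives $\mathbf{x}\mathbbm{1}_k^\top\mathbf{z}=0$. Hence $T\mathbf{y}=0$, so $0$ is an eigenvalue with multiplicity at least $k-1$.
\item For $\tilde T$, the vertices in $U$ now have degree $r+k-1$. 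The top-left block becomes $\frac{1}{r+k-1}(J_k-I_k)$, and the second block column becomes $\frac{1}{r+k-1}J_{k,r}$. Since the twins remain twins, the rows of the common neighbourhood still have the form $\tilde{\mathbf{x}}\mathbbm{1}_k^\top$. Then $\tilde T\mathbf{y}=-\frac{1}{r+k-1}\mathbf{y}$, giving the eigenvalue $-\frac{1}{r+k-1}$ with multiplicity at least $k-1$.
\end{itemize}

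Next we must argue that these $k-1$ eigenvalues, together with the $n-k+1$ eigenvalues of $Q$ counted with multiplicity, form exactly the spectrum of $T$. This is the step needing care, since eigenvalues could coincide and multiplicities could overlap. The cleanest route is a similarity. Let $S$ be an invertible matrix acting on the first $k$ coordinates, whose first column is $\mathbbm{1}_k$ and whose remaining columns form a basis of $\mathbbm{1}_k^\perp$ (for instance $e_1-e_i$), padded by an identity on the other coordinates. Conjugating $T$ by $S$ gives a block triangular matrix with diagonal blocks $Q$ and $\lambda I_{k-1}$, where $\lambda=0$ for $T$ and $\lambda=-\frac{1}{r+k-1}$ for $\tilde T$. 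This holds because the span of the lifted vectors is invariant, and so is the span of the $\mathbf{y}$'s. The characteristic polynomial then factors as $\det(tI-Q)(t-\lambda)^{k-1}$, which settles the multiplicities. Since $Q$ is itself stochastic and irreducible, its eigenvalue $1$ is the Perron value and appears once, so the $j\ge2$ sums match.

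Finally we apply \eqref{eq:Keig}. For $T$ each extra eigenvalue contributes $\frac{1}{1-0}=1$, giving $\mathcal{K}(T)=\mathcal{K}(Q)+k-1$. For $\tilde T$ each extra eigenvalue contributes
\[\frac{1}{1+\frac{1}{r+k-1}}=\frac{r+k-1}{r+k},\]
giving $\mathcal{K}(\tilde T)=\mathcal{K}(\tilde Q)+\frac{(k-1)(r+k-1)}{r+k}$.

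The main obstacle is the multiplicity bookkeeping in the similarity step. I would verify it by writing the conjugated matrix explicitly and checking that its lower-left block vanishes, which follows from the constant row sums on $U$.
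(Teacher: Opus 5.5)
Your proposal is correct and follows essentially the same route as the paper. Both arguments use the $k-1$ zero-sum vectors supported on the twin set (the paper takes $e_1-e_j$, $2\le j\le k$), which give eigenvalue $0$ for $T$ and $-\frac{1}{r+k-1}$ for $\tilde T$, take the remaining eigenvalues from $Q$ (resp.\ $\tilde Q$) via the equitable partition, and finish with \eqref{eq:Keig}. Your explicit change of basis makes $T$ similar to a block matrix with diagonal blocks $Q$ and $\lambda I_{k-1}$; it is in fact block diagonal, since both subspaces are invariant. This settles the algebraic multiplicities more rigorously than the paper's brief appeal to orthogonality of the two families of eigenvectors.
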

\begin{proof}
Let $G$ be a graph of order $n$, and let $T$ be the transition matrix for the random walk on $G$, written in the form \eqref{eq:T_ind_set}, and $Q$ its quotient matrix as described in \eqref{eq:Q_ind_set}.

When adding a clique to the set of vertices $\{v_1, v_2, \ldots, v_k\}$, we achieve the transition matrix 
\begin{equation*}%\label{eq:T_clique}
\arraycolsep=2.4pt\def\arraystretch{1.4}
\tilde{T} = \left[\begin{array}{c|c|c} \frac{1}{r+k-1}(J_{k,k} - I_k) & \frac{1}{r+k-1}J_{k,r} & O_{k, n-r-k} \\[2pt]\hline
\mathbf{x}\mathbbm{1}_k^\top & A & B \\\hline
O_{n-r-k, k} & C & D \end{array}\right]
\end{equation*}
with corresponding quotient matrix
\begin{equation*}%\label{eq:Q_clique}
\tilde{Q} = \left[\begin{array}{c|c|c} \frac{k-1}{r+k-1}& \frac{1}{r+k-1}\mathbbm{1}_r^\top & 0_{n-r-k}^\top \\[2pt]\hline
k\mathbf{x} & A & B \\\hline
0_{n-r-k} & C & D \end{array}\right].
\end{equation*}

Let $e_1,e_2,\ldots, e_n$ be the columns of an order $n$ identity matrix. The $(k-1)$ linearly independent vectors
$(e_1-e_j)$ for $2\leq j\leq k$ 
are eigenvectors of $T$ corresponding to the eigenvalue $0$, and eigenvectors of $\tilde{T}$ corresponding to the eigenvalue $\frac{-1}{r+k-1}$. Furthermore, these eigenvectors must be orthogonal to any eigenvector of $T$ (or $\tilde{T}$, respectively) obtained from the quotient matrix $Q$ (respectively, $\tilde{Q}$), since by Lemma \ref{lem:equitable_partitions} those eigenvectors are constant on the first $k$ entries. Hence, the conclusions follow by 
\eqref{eq:Keig}.
\end{proof}

Next, we consider how the accessibility index of each state in the quotient Markov chain is related to the accessibility indices in the original Markov chain.

\begin{prop}\label{prop:acc_quotient}
Let $G$ be a graph of order $n$ and size $m$ with a set $U=\{u_1, u_2,\ldots, u_k\}$ of independent twin vertices with common degree $r$. Let $T$ be the transition matrix for the random walk on $G$, written in the form \eqref{eq:T_ind_set}, and $Q$ its quotient matrix as described in \eqref{eq:Q_ind_set}. Let $v_0$ be the first state of $Q$ corresponding to the coalescence of $U$. Then 
\begin{enumerate}[(a)]
\item $\alpha_T(u_1) = \alpha_T(u_2) = \cdots = \alpha_T(u_k)$.
\item For all $v\in V(G)\backslash U$, 
\[\alpha_Q(v) = \alpha_T(v).\]
\item For any $u\in U$, 
\[\alpha_Q(v_0) = \alpha_T(u) -\frac{2m(k-1)}{kr}.\]
\end{enumerate}
\end{prop}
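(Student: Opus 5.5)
The plan is to interpret $Q$ as the transition matrix of the lumped chain, in which the twins in $U$ are merged into one state $v_0$. We then compare first passage times in the two chains directly, using the symmetry of twin vertices throughout. First note that $Q$ is stochastic and irreducible. Its stationary vector is obtained from $w$ by summing the entries over $U$. Thus $w_Q(v_0)=\frac{kr}{2m}$, and $w_Q(v)=w_v=\frac{\deg(v)}{2m}$ for $v\notin U$. This can be checked directly from $w^\top T=w^\top$ together with the block structure in \eqref{eq:T_ind_set} and \eqref{eq:Q_ind_set}.

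For (a), any permutation of $U$ fixing all other vertices is a graph automorphism, since the vertices of $U$ are independent twins. Such a permutation preserves $T$ and $w$, so it preserves all mean first passage times up to relabelling, and the $\alpha_T(u_j)$ coincide. The same symmetry gives $m_{u_i,v}=m_{u_j,v}$ for $v\notin U$, and $m_{u_j,u_1}=:c$ for all $j\ne 1$.

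For (b), fix $v\notin U$. The vector of mean first passage times to $v$ (over starting states other than $v$) is the unique solution of $(I-T_{(v)})\mathbf{m}=\mathbbm{1}$, where $T_{(v)}$ deletes row and column $v$. Because $\mathbf{m}$ is constant on $U$, collapsing those $k$ equal entries to a single entry yields a solution of the corresponding system for $Q$. By uniqueness, $m^Q_{v_0,v}=m_{u,v}$ and $m^Q_{i,v}=m_{i,v}$ for $i\notin U$. The return times also agree: $m^Q_{v,v}=1/w_Q(v)=1/w_v=m_{v,v}$. Summing with the stationary weights, and using $w_Q(v_0)=\sum_{u\in U}w_u$, gives $\alpha_Q(v)=\alpha_T(v)$.

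For (c), let $h_i=m^Q_{i,v_0}$. By the same uniqueness argument, $h_i$ is the expected hitting time of the set $U$ from $i\notin U$ in $T$. Starting at $i\notin U$, the first vertex of $U$ reached is uniformly distributed over $U$, by the twin symmetry applied to paths avoiding $U$. Hence
\[ m_{i,u_1}=h_i+\tfrac{k-1}{k}\,c, \qquad i\notin U. \]
The main obstacle is determining $c$. We do this by comparing return times via a first-step analysis from $u_1$, which moves uniformly to its $r$ neighbours $N$. In $T$ this gives $\frac{2m}{r}=m_{u_1,u_1}=1+\frac1r\sum_{j\in N}m_{j,u_1}$. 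In $Q$ it gives $\frac{2m}{kr}=m^Q_{v_0,v_0}=1+\frac1r\sum_{j\in N}h_j$. Subtracting, and using the displayed relation, yields $\frac{k-1}{k}c=\frac{2m}{r}-\frac{2m}{kr}$, so $c=\frac{2m}{r}$ (for $k\ge2$; the case $k=1$ is trivial). Finally we expand
\[ \alpha_T(u_1)=\sum_{i\notin U}w_i\Bigl(h_i+\tfrac{k-1}{k}c\Bigr)+(k-1)\tfrac{r}{2m}\,c \]
and compare it with $\alpha_Q(v_0)=\sum_{i\notin U}w_ih_i$. Using $\sum_{i\notin U}w_i=1-\frac{kr}{2m}$, the difference is $(k-1)\bigl[\frac{2m}{kr}-1+1\bigr]=\frac{2m(k-1)}{kr}$, as claimed.
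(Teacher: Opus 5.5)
Your proof is correct, and it takes a genuinely different route from the paper.

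For (b), the paper uses the stochastic complement machinery of Appendix~\ref{app:stoch_comp}. It checks that the complements $S_2$ of $T$ and $Q$ coincide, deduces $M_{22}^{(T)}=M_{22}^{(Q)}$ from $M_{22}=\gamma_2M_{S_2}+V_2$, and then compares $r\mathbbm{1}_k^\top M_{12}^{(T)}$ with $krM_{12}^{(Q)}$. For (c), the paper computes no hitting times of $U$ at all. It combines the spectral identity $\mathcal{K}(T)=\mathcal{K}(Q)+k-1$ from Lemma~\ref{lem:kemeny_quotient} with $\mathcal{K}=\alpha^\top w$ from \eqref{eq:alphaw} and parts (a), (b), so that everything cancels except the contribution of $U$.

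Your lumping argument, via uniqueness of the solution of $(I-T_{(v)})\mathbf{m}=\mathbbm{1}$, replaces the stochastic complement computation with a few lines. Your proof of (c) is self-contained: it uses neither (b) nor the eigenvalue count. It also proves more, namely the entrywise relations $m_{i,u}=m^Q_{i,v_0}+\frac{2m(k-1)}{kr}$ for $i\notin U$ and $m_{u_j,u_1}=\frac{2m}{r}$. Feeding your (b) and (c) back into $\mathcal{K}=\alpha^\top w$ even gives an eigenvalue-free proof of $\mathcal{K}(T)=\mathcal{K}(Q)+k-1$.

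The paper's route is shorter given the tools it has already set up. It also ties the shift $\frac{2m(k-1)}{kr}$ directly to the $k-1$ eigenvalues equal to $0$ that Lemma~\ref{lem:kemeny_quotient} splits off.

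The step you flag as the main obstacle is easier than you make it. The rows of $T$ indexed by $u_j$ and $u_1$ are identical, and both have a zero entry in column $u_1$. So first-step analysis gives $c=m_{u_j,u_1}=1+\frac1r\sum_{j'\in N}m_{j',u_1}=m_{u_1,u_1}=\frac{2m}{r}$ directly, with no comparison of return times in $Q$.
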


\begin{proof}

\begin{enumerate}[(a)]
\item This follows from the definition of twin vertices.

\item The proof of this part is involved, and uses stochastic complements (see Appendix~\ref{app:stoch_comp} for necessary notation). Partition $T$ as in \eqref{eq:T_ind_set}, with 
\[
T = \left[ \begin{array}{c|c}
T_{11}&T_{12}\\ \hline 
T_{21}&T_{22} \end{array} 
\right]= 
\arraycolsep=2.4pt\def\arraystretch{1.2}
\left[\begin{array}{c|cc} O_{k, k} & \frac{1}{r}J_{k,r} & O_{k, n-r-k} \\[2pt]\hline
\mathbf{x}\mathbbm{1}_k^\top & A & B \\
O_{n-r-k, k} & C & D \end{array}\right].
\]
The quotient matrix $Q$ is partitioned similarly, with 
\[Q = \left[ \begin{array}{c|c}
Q_{11}&Q_{12}\\ \hline 
Q_{21}&Q_{22} \end{array} 
\right]= \left[\begin{array}{c|cc} 0 & \frac{1}{r}\mathbbm{1}_{r}^\top & 0_{n-r-k}^\top \\[2pt]\hline
k\mathbf{x} & A & B \\
0_{n-r-k} & C & D \end{array}\right].
\]
Note that the stationary vectors of $T$ and of $Q$, partitioned conformally, are 
\begin{eqnarray*}
w_T^\top & = & \frac{1}{2m}\left[\begin{array}{c|c} r\mathbbm{1}_k^\top & z^\top\end{array}\right] %\\
\qquad {\text{\ and \ }}\qquad 
w_Q^\top  =  \frac{1}{2m}\left[\begin{array}{c|c} kr & z^\top\end{array}\right],
\end{eqnarray*}
where $z$ is the vector of degrees of vertices in $V(G)\backslash U$.

We let $M^{(T)}$ and $M^{(Q)}$ denote the mean first passage matrices of $T$ and of $Q$, respectively, and consider certain entries of $w_T^\top M^{(T)}$ and $w_Q^\top M^{(Q)}$ to show that they are equal, in order to show that $\alpha_Q(v) = \alpha_T(v)$ for $v\in V(G)\backslash U$. In partitioned form,   
\begin{eqnarray*}
2m w_T^\top M_T & = & \left[\begin{array}{c|c} r\mathbbm{1}_k^\top & z^\top\end{array}\right] \left[\begin{array}{c|c} M_{11}^{(T)} & M_{12}^{(T)} \\ \hline M_{21}^{(T)} & M_{22}^{(T)}\end{array}\right]\\
& = & \left[\begin{array}{c|c} r\mathbbm{1}_k^\top M_{11}^{(T)} + z^\top M_{21}^{(T)} & r\mathbbm{1}_k^\top M_{12}^{(T)} + z^\top M_{22}^{(T)}\end{array}\right] \\
2m w_Q^\top M_Q & = & \left[\begin{array}{c|c} kr & z^\top\end{array}\right] \left[\begin{array}{c|c} m_{11}^{(Q)} & M_{12}^{(Q)} \\ \hline M_{21}^{(Q)} & M_{22}^{(Q)}\end{array}\right]\\
& = & \left[\begin{array}{c|c} kr m_{11}^{(Q)} + z^\top M_{21}^{(Q)} & kd M_{12}^{(Q)} + z^\top M_{22}^{(Q)}\end{array}\right]
\end{eqnarray*}
We want to show that 
\[r\mathbbm{1}_k^\top M_{12}^{(T)} + z^\top M_{22}^{(T)} = kr M_{12}^{(Q)} + z^\top M_{22}^{(Q)}.\]

The stochastic complements of $T_{22}$ and $Q_{22}$ agree, since 
\begin{eqnarray*}
S_2^{(T)} & = & T_{22} + T_{21}T_{12} 
=\begin{bmatrix} A & B \\ C & D\end{bmatrix} + \left[\begin{array}{c} \mathbf{x}\mathbbm{1}_k^\top \\ \hline 0 \end{array} \right]\left[\begin{array}{c|c} \frac{1}{r}\mathbbm{1}_k\mathbbm{1}_r^\top & 0 \end{array}\right]\\
& = & \begin{bmatrix} A + \frac{k}{r}\mathbf{x}\mathbbm{1}_r^\top & B \\ C & D\end{bmatrix}
=S_2^{(Q)}.
\end{eqnarray*}
Hence $M_{S_2}^{(T)} = M_{S_2}^{(Q)}.$

Next, consider that $M_{22}^{(T)} = \gamma_2M_{S_2}^{(T)} + V_2^{(T)}$ as described in Appendix~\ref{app:stoch_comp}, and an analogous expression holds for $M_{22}^{(Q)}$. Without directly computing, we can show that these submatrices are equal. Note that
\begin{eqnarray*}
V_2^{(T)} & = & (I-S_2^{(T)})^\# \begin{bmatrix} \mathbf{x}\mathbbm{1}_k^\top \\ 0 \end{bmatrix}\mathbbm{1}_k\mathbbm{1}_{n-k}^\top - \left((I-S_2^{(T)})^\# \begin{bmatrix} \mathbf{x}\mathbbm{1}_k^\top \\ 0 \end{bmatrix}\mathbbm{1}_k\mathbbm{1}_{n-k}^\top\right)^\top\\
& = & (I-S_2^{(T)})^\# \begin{bmatrix} k\mathbf{x} \\ 0 \end{bmatrix}\mathbbm{1}_{n-k}^\top - \left((I-S_2^{(T)})^\# \begin{bmatrix} k\mathbf{x} \\ 0 \end{bmatrix}\mathbbm{1}_{n-k}^\top\right)^\top.
\end{eqnarray*}
But $S_2^{(T)} = S_2^{(Q)}$, and $\begin{bmatrix} k\mathbf{x} \\ 0 \end{bmatrix} = Q_{21}$, so this is in fact the expression for $V_2^{(Q)}$ also. Hence $M_{22}^{(T)} = M_{22}^{(Q)}$ (since  $\gamma_2^{(Q)} = \gamma_2^{(T)} = \frac{1}{\mathbbm{1}^\top w_2}$).
Thus $w_2^\top M_{22}^{(T)} = w_2^\top M_{22}^{(Q)}$. It remains to show that \[r\mathbbm{1}_k^\top M_{12}^{(T)} = kr M_{12}^{(Q)}.\]
Note that
\begin{eqnarray*}
M_{12}^{(T)} & = & (I-T_{11})^{-1}T_{12}\left( M_{22}^{(T)} - (M_{22}^{(T)})_{dg}\right) + (I-T_{11})^{-1}J_{k,n-k}\\
& = & \tfrac{1}{r}\mathbbm{1}_k \begin{bmatrix} \mathbbm{1}_r^\top \mid 0^\top \end{bmatrix} \left( M_{22}^{(T)} - (M_{22}^{(T)})_{dg}\right) + \mathbbm{1}_k\mathbbm{1}_{n-k}^\top,
\end{eqnarray*}
and hence
\begin{eqnarray*}
r\mathbbm{1}_k^\top M_{12}^{(T)} & = & \mathbbm{1}_k^\top\mathbbm{1}_k \begin{bmatrix} \mathbbm{1}_r^\top \mid 0^\top \end{bmatrix} \left( M_{22}^{(T)} - (M_{22}^{(T)})_{dg}\right) + r\mathbbm{1}_k^\top\mathbbm{1}_k\mathbbm{1}_{n-k}^\top\\
& = & k\begin{bmatrix} \mathbbm{1}_r^\top \mid 0^\top \end{bmatrix} \left( M_{22}^{(T)} - (M_{22}^{(T)})_{dg}\right) + rk\mathbbm{1}_{n-k}^\top.
\end{eqnarray*}
But %$M_{12}^{(Q)}$ 
by definition, % is 
\begin{eqnarray*}
M_{12}^{(Q)} & = & (I-Q_{11})^{-1}Q_{12}\left( M_{22}^{(Q)} - (M_{22}^{(Q)})_{dg}\right) + (I-Q_{11})^{-1}\mathbbm{1}_{n-k}^\top\\
& = & \tfrac{1}{r}\begin{bmatrix} \mathbbm{1}_r^\top \mid 0^\top \end{bmatrix} \left( M_{22}^{(Q)} - (M_{22}^{(Q)})_{dg}\right) + \mathbbm{1}_{n-k}^\top,
\end{eqnarray*}
and since $M_{22}^{(Q)} = M_{22}^{(T)}$,  \quad 
$r\mathbbm{1}_k^\top M_{12}^{(T)} = kr M_{12}^{(Q)}.$

Therefore, in a graph $G$ with an independent set $U$ of twin vertices,  for all $u\in V(G)\backslash U$, \quad 
$\alpha_T(u) = \alpha_Q(u).$

\item  By Equation~(\ref{eq:alphaw}), $\mathcal{K}(T) = w_T^\top \alpha_T$, $\mathcal{K}(Q) = w_Q^\top \alpha_Q$, and further 
\[w_T^\top = \frac{1}{2m}\begin{bmatrix} r \mathbbm{1}_k^\top \mid z^\top \end{bmatrix}, \quad w_Q\top = \frac{1}{2m}\begin{bmatrix} kr \mid z^\top \end{bmatrix}.\]
By Lemma~\ref{lem:kemeny_quotient},  $\mathcal{K}(T) = \mathcal{K}(Q)+(k-1).$
Using the substitutions above for $\mathcal{K}(T)$ and $\mathcal{K}(Q)$ in this equation, along with using the conclusions of parts (a) and (b) of this proposition to reduce the resulting equation,  for any $u\in U$, 
\begin{eqnarray*}
\frac{kr}{2m}\alpha_T(u) & = & \frac{kr}{2m}\alpha_Q(v_0) + (k-1)\\
\alpha_T(u) & = & \alpha_Q(v_0) + \frac{2m(k-1)}{kr}.
\end{eqnarray*}
\end{enumerate}
\end{proof}

When a graph has a set of twin vertices, the quotient matrix obtained from the equitable partition of the transition matrix will correspond to a random walk on a weighted graph, where the set of twin vertices has been coalesced to a single vertex. As such, the modification to add all edges between vertices in the independent set of twins corresponds to adding a weighted loop to the single coalesced vertex. In the quotient transition matrix, this corresponds to a rank-one update. 

\begin{theorem}\cite{hu2019complete}\label{thm:rkone}
Let $T$ be an irreducible transition matrix, and let $\tilde{T} = T+e_iu^\top$ be a perturbation of the $i^{th}$ row of $T$, such that $\tilde{T}$ is irreducible and stochastic (i.e. $u^\top\mathbbm{1} = 0$). Fix $k\neq i$, and let 
\[\theta_j = \frac{u^\top(I-T)^\#e_j}{1-u^\top(I-T)^\#e_i}.\]
If $T$ has stationary vector $w$ and mean first passage matrix $M$, then 
\[\mathcal{K}(\tilde{T}) - \mathcal{K}(T) = w_i\sum_j \theta_j (m_{k,i} - m_{j,i}) + \theta_i.\]
\end{theorem}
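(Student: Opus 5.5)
The approach is to use the spectral formula \eqref{eq:Keig} in its group-inverse form, $\mathcal{K}(T)=\operatorname{tr}\big((I-T)^\#\big)$. This holds because the eigenvalues of $(I-T)^\#$ are $0$ and $1/(1-\lambda_j)$ for $j\geq 2$. Write $A=(I-T)^\#$, so that $(I-T)A=A(I-T)=I-\mathbbm{1}w^\top$, $A\mathbbm{1}=0$ and $w^\top A=0$. The plan has three parts: find $(I-\tilde T)^\#$ explicitly as a rank-one-type correction of $A$, take traces, and convert the resulting entries of $A$ into mean first passage times using $m_{j,i}=(a_{ii}-a_{ji})/w_i$ for $j\neq i$, which is the entrywise form of $M=(I-A+JA_{dg})W^{-1}$.

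\emph{Step 1: the new stationary vector.} Put $\theta^\top=u^\top A/(1-u^\top Ae_i)$, so that $\theta_j$ is exactly the quantity in the statement. Since $u^\top\mathbbm{1}=0$ we have $u^\top A(I-T)=u^\top$. A direct check then shows that $\tilde w^\top=w^\top+w_i\theta^\top$ satisfies $\tilde w^\top(I-T-e_iu^\top)=0$: the $i$th entry of $\tilde w$ equals $w_i/(1-u^\top Ae_i)$, which is exactly what is needed to cancel the $u^\top$ term. Also $\theta^\top\mathbbm{1}=0$, because $A\mathbbm{1}=0$. So $\tilde w^\top\mathbbm{1}=1$, and $\tilde w$ is the stationary vector of $\tilde T$.

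\emph{Step 2: the new group inverse.} I would propose the candidate
\[X=(I-\mathbbm{1}\tilde w^\top)\Big(A+\tfrac{1}{1-u^\top Ae_i}Ae_iu^\top A\Big)(I-\mathbbm{1}\tilde w^\top)\]
and verify $(I-\tilde T)X=X(I-\tilde T)=I-\mathbbm{1}\tilde w^\top$ and $X\mathbbm{1}=0$. These conditions characterize the group inverse of $I-\tilde T$ for an irreducible $\tilde T$. The verification is a Sherman--Morrison-type computation using $(I-T)A=I-\mathbbm{1}w^\top$. I expect this to be the main bookkeeping obstacle: the cross terms involving $\mathbbm{1}w^\top$ versus $\mathbbm{1}\tilde w^\top$ must cancel, and they do so precisely because of Step 1. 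If the direct verification becomes messy, my fallback is to avoid $X$ and use $\operatorname{tr}(I-\tilde T)^\#=\sum_{j}\tilde w_j\tilde m_{k,j}-1$ with a first-passage argument instead. I would try the group-inverse route first.

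\emph{Step 3: take traces.} Use $\operatorname{tr}(I-\mathbbm{1}\tilde w^\top)Y(I-\mathbbm{1}\tilde w^\top)=\operatorname{tr}(Y)-\tilde w^\top Y\mathbbm{1}$ when $Y\mathbbm{1}=0$, which holds here since $A\mathbbm{1}=0$. This gives
\[\operatorname{tr}X=\operatorname{tr}A+\theta^\top Ae_i-\tilde w^\top\Big(A+\tfrac{1}{1-u^\top Ae_i}Ae_iu^\top A\Big)\mathbbm{1}.\]
The last term vanishes because $A\mathbbm{1}=0$. However, the projection $(I-\mathbbm{1}\tilde w^\top)$ acting on the left also contributes $-\tilde w^\top A\cdots$ terms. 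Since $w^\top A=0$, these reduce to $w_i\theta^\top A(\cdots)$, and after simplification with $u^\top Ae_i$ they should leave exactly the extra $\theta_i$.

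For the main term, $\theta^\top\mathbbm{1}=0$ lets us subtract $a_{ki}$ for free:
\[\theta^\top Ae_i=\sum_j\theta_j(a_{ji}-a_{ki})=w_i\sum_j\theta_j(m_{k,i}-m_{j,i}).\]
This also explains why the index $k\neq i$ is arbitrary. Combining this with $\operatorname{tr}A=\mathcal{K}(T)$ yields the stated formula.
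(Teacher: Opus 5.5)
The paper does not prove this theorem; it quotes it from \cite{hu2019complete}. Your Steps 1 and 2 are correct. With $B=A+Ae_i\theta^\top$ and $P=I-\mathbbm{1}\tilde w^\top$, one checks $(I-\tilde T)B=P$ and $B(I-\tilde T)=I-\mathbbm{1}w^\top$. Hence $X=PBP$ satisfies $(I-\tilde T)X=X(I-\tilde T)=P$ and $X\mathbbm{1}=0$, which forces $X=(I-\tilde T)^\#$. So the fallback is unnecessary.

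Step 3, however, takes the $\theta_i$ from the wrong place, and two of its claims are false.

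\emph{The trace.} Since $P^2=P$, cyclicity gives $\operatorname{tr}(PBP)=\operatorname{tr}(BP)=\operatorname{tr}B-\tilde w^\top B\mathbbm{1}$. This already accounts for both projections; the left factor contributes nothing further. The term you point to is $\tilde w^\top B\mathbbm{1}=w_i\theta^\top A(I+e_i\theta^\top)\mathbbm{1}$, which is $0$ rather than $\theta_i$, because $B\mathbbm{1}=A\mathbbm{1}+Ae_i(\theta^\top\mathbbm{1})=0$. So exactly
\[
\mathcal{K}(\tilde T)-\mathcal{K}(T)=\theta^\top Ae_i=\frac{u^\top A^2e_i}{1-u^\top Ae_i}.
\]

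\emph{The conversion to passage times.} Your identity $\sum_j\theta_j(a_{ji}-a_{ki})=w_i\sum_j\theta_j(m_{k,i}-m_{j,i})$ uses the off-diagonal relation $w_im_{j,i}=a_{ii}-a_{ji}$ also at $j=i$. In the statement, as throughout the paper, $m_{i,i}=1/w_i$ is the mean first return time, so $M=(I-A+JA_{dg})W^{-1}$ gives $w_im_{i,i}=1$. The $j=i$ term is therefore $w_i\theta_i(m_{k,i}-m_{i,i})=\theta_i(a_{ii}-a_{ki}-1)$, and hence
\[
\theta^\top Ae_i=\sum_j\theta_j(a_{ji}-a_{ki})=w_i\sum_j\theta_j(m_{k,i}-m_{j,i})+\theta_i .
\]
This diagonal term is the true source of the $+\theta_i$.

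In your write-up the two errors cancel. Carried out faithfully, though, your plan finds that the projection terms vanish and then yields the formula without $\theta_i$, which is false under this convention. A two-state chain already shows the discrepancy. The paper's own use of the theorem, in the proof of Theorem~\ref{thm:braess_clique_diff_acc}, reflects this bookkeeping: there $\theta_1$ is cancelled by $-w_1\theta_1m_{1,1}$ with $m_{1,1}=1/w_1$.

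To repair the proof, replace the left-projection claim by the exact identity $\operatorname{tr}X=\operatorname{tr}A+\theta^\top Ae_i$, and treat the $j=i$ term as above. Then use $\theta^\top\mathbbm{1}=0$ to insert $m_{k,i}$; this still explains why $k$ is arbitrary.
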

We are now ready to state and prove the following result determining the difference in Kemeny's constant when adding a clique to an independent set of twin vertices.

\begin{theorem}\label{thm:braess_clique_diff_acc}
Let $G$ be a graph of order $n$ and size $m$ with an independent set of twin vertices of order $k$, and let $r$ be the common degree of these vertices. Let $T$ be the transition matrix for the random walk on $G$. Let $\alpha_1$ be the accessibility index of one of the twin vertices. Let $\tilde{T}$ be the transition matrix obtained when inserting all possible edges between the twin vertices. 
Then 
\[\mathcal{K}(\tilde{T}) - \mathcal{K}(T) = \frac{k(k-1)}{2m+k(k-1)}\left(\alpha_1 - \frac{2m(k-1)}{kr}\right) - \frac{k-1}{r+k}.\]
\end{theorem}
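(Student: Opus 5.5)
The plan is to pass to the quotient chains and reduce the problem to a rank-one update of a single row. By Lemma~\ref{lem:kemeny_quotient},
\[
\mathcal{K}(\tilde{T}) - \mathcal{K}(T) = \mathcal{K}(\tilde{Q}) - \mathcal{K}(Q) + \frac{(k-1)(r+k-1)}{r+k} - (k-1) = \mathcal{K}(\tilde{Q}) - \mathcal{K}(Q) - \frac{k-1}{r+k}.
\]
This already produces the last term of the claimed formula. It remains to show that
\[
\mathcal{K}(\tilde{Q}) - \mathcal{K}(Q) = \frac{k(k-1)}{2m+k(k-1)}\,\alpha_Q(v_0),
\]
and then to substitute $\alpha_Q(v_0) = \alpha_1 - \frac{2m(k-1)}{kr}$ from Proposition~\ref{prop:acc_quotient}(c).

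\textbf{Identifying the perturbation.} The matrices $Q$ and $\tilde{Q}$ differ only in their first row, the row of $v_0$. Let $q_1^\top = e_1^\top Q = \left[\begin{array}{c|c|c} 0 & \frac{1}{r}\mathbbm{1}_r^\top & 0^\top\end{array}\right]$ and set $c = \frac{k-1}{r+k-1}$. A direct check shows that the first row of $\tilde{Q}$ equals $c\,e_1^\top + (1-c)\,q_1^\top$. The key point is that $\frac{1}{r+k-1} = \frac{1-c}{r}$. Hence
\[
\tilde{Q} = Q + e_1 u^\top, \qquad u^\top = c\,(e_1^\top - e_1^\top Q) = c\, e_1^\top (I-Q).
\]
Since $Q$ is stochastic, $u^\top \mathbbm{1} = 0$, and $\tilde{Q}$ is clearly irreducible, so Theorem~\ref{thm:rkone} applies with $i = 1$.

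\textbf{Computing the $\theta_j$.} Because $u^\top$ carries the factor $(I-Q)$, I can use $(I-Q)(I-Q)^\# = I - \mathbbm{1}w^\top$, where $w = w_Q$. This gives $u^\top (I-Q)^\# = c\,(e_1^\top - w^\top)$. Therefore
\[
\theta_j = \frac{c(\delta_{1j} - w_j)}{1 - c(1-w_1)}.
\]

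\textbf{Evaluating the rank-one formula.} First, $\sum_j \theta_j = 0$, so the terms involving $m_{k,1}$ cancel. The formula then reduces to
\[
-\,w_1 \sum_j \theta_j m_{j,1} + \theta_1.
\]
To evaluate the sum I use two facts. One is $m_{1,1} = 1/w_1$. The other is $\sum_j w_j m_{j,1} = \alpha_Q(v_0) + w_1 m_{1,1} = \alpha_Q(v_0) + 1$. Writing $\mathrm{den} = 1 - c(1-w_1)$, the total becomes
\[
\frac{c}{\mathrm{den}}\Bigl[-1 + w_1\bigl(\alpha_Q(v_0) + 1\bigr) + (1 - w_1)\Bigr] = \frac{c\,w_1}{\mathrm{den}}\,\alpha_Q(v_0).
\]
Finally, substitute $w_1 = \frac{kr}{2m}$ and $c = \frac{k-1}{r+k-1}$. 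Clearing denominators should simplify $\frac{c\,w_1}{\mathrm{den}}$ to $\frac{kr(k-1)}{2mr + kr(k-1)} = \frac{k(k-1)}{2m + k(k-1)}$.

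\textbf{Main obstacle.} The only delicate step is recognising $u$ as a multiple of $e_1^\top(I-Q)$. This is what collapses the group-inverse expression to $e_1^\top - w^\top$. Everything after that is bookkeeping with signs and the index convention of Theorem~\ref{thm:rkone}, together with the final algebraic simplification.
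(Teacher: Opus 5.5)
Your proposal is correct and follows essentially the same route as the paper. You reduce to the quotient chains via Lemma~\ref{lem:kemeny_quotient}, write $\tilde{Q}=Q+e_1u^\top$ with $u^\top=\frac{k-1}{r+k-1}e_1^\top(I-Q)$, use $(I-Q)(I-Q)^\#=I-\mathbbm{1}w^\top$ to obtain the $\theta_j$, and use $\sum_j\theta_j=0$ to collapse Theorem~\ref{thm:rkone} to $\frac{(k-1)w_1}{r+(k-1)w_1}\alpha_Q(v_0)$; you then finish with Proposition~\ref{prop:acc_quotient}(c) and $w_1=\frac{kr}{2m}$, and your intermediate identities, such as $1-c(1-w_1)=\frac{r+(k-1)w_1}{r+k-1}$ and $\sum_j w_jm_{j,1}=\alpha_Q(v_0)+1$, all check out.
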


\begin{proof}
Let $G$ be a graph of order $n$, and let $T$ be the transition matrix for the random walk on $G$, written in the form \eqref{eq:T_ind_set}, and $Q$ its quotient matrix as described in \eqref{eq:Q_ind_set}.

By Lemma \ref{lem:kemeny_quotient},
\begin{equation}\label{eq:diffKT}\mathcal{K}(\tilde{T}) - \mathcal{K}(T) = \mathcal{K}(\tilde{Q}) - \mathcal{K}(Q) - \frac{k-1}{r+k}.\end{equation}
To analyse the difference $\mathcal{K}(\tilde{Q}) - \mathcal{K}(Q)$, note that $\tilde{Q}$ can be written as a rank-one update of $Q$. In particular,  
$\tilde{Q}=Q+e_1u^\top$
with 
\begin{eqnarray*}u^\top & = & \left[\begin{array}{c|c|c} \frac{k-1}{r+k-1} & -\frac{(k-1)}{r(r+k-1)}\mathbbm{1}_r^\top & 0_{n-r-k}^\top\end{array}\right] \\
& = & \frac{k-1}{r+k-1} \left[\begin{array}{c|c|c} 1 & \tfrac{-1}{r}\mathbbm{1}_r^\top & 0^\top_{n-r-k}\end{array}\right]\\
& = & \frac{k-1}{r+k-1} {e}_1^\top(I-Q).
\end{eqnarray*}
We substitute this into the expression $\theta_j = \frac{u^\top(I-Q)^\#e_j}{1-u^\top(I-Q)^\#e_1}$ in Theorem~\ref{thm:rkone} and simplify using properties of the group inverse; namely, the fact that $(I-Q)(I-Q)^\# = I -\mathbbm{1}w^\top$ with $w=w_Q^\top$. Thus
\[\theta_j = \left\{\begin{array}{cc} \dfrac{-(k-1)w_j}{r+(k-1)w_1}, & \mbox{if $j\neq 1$;}\\[10pt]
\dfrac{(k-1)(1-w_1)}{r+(k-1)w_1}, & \mbox{if $j=1$.}\end{array}\right.\]
%From this, we can conclude that 
It follows that $\sum_{j=1}^{n-k+1} \theta_j = 0$. Therefore, the expression $\mathcal{K}(\tilde{Q})-\mathcal{K}(Q)$ simplifies substantially. First, fix any $\ell\neq 1$. By Theorem~\ref{thm:rkone},
\begin{eqnarray*}
\mathcal{K}(\tilde{Q})-\mathcal{K}(Q) & = & w_1 \sum_{j=1}^{n-k+1} \theta_j(m_{\ell, 1} - m_{j,1}) + \theta_1 \\
&= & w_1m_{\ell,1}\sum_j \theta_j -w_1\sum_j \theta_jm_{j,1} + \theta_1 
\\
&=& 
-w_1\sum_j\theta_jm_{j,1} + \theta_1 
=-w_1\sum_{j\neq 1}\theta_jm_{j,1} -w_1\theta_1m_{1,1} + \theta_1 \\
& = & -w_1\sum_{j\neq 1}\theta_jm_{j,1} -w_1\theta_1 (\tfrac{1}{w_1}) + \theta_1 
=-w_1\sum_{j\neq 1}\theta_jm_{j,1}\\
& = & \dfrac{(k-1)w_1}{r+(k-1)w_1}\sum_{j\neq 1}w_jm_{j,1}.
\end{eqnarray*}
The final summation is the accessibility index for the first state in the quotient transition matrix $Q$. Thus by Proposition~\ref{prop:acc_quotient} (c),  this summation is $\alpha_1 - \frac{2m(k-1)}{kr}$.
Hence, by (\ref{eq:diffKT}),
\begin{eqnarray*}
\mathcal{K}(\tilde{T})-\mathcal{K}(T) & = & \mathcal{K}(\tilde{Q}) - \mathcal{K}(Q) - \frac{k-1}{r+k}\\
& = & \dfrac{(k-1)w_1}{r+(k-1)w_1}\left(\alpha_1- \frac{2m(k-1)}{kr}\right) - \dfrac{k-1}{r+k}.\end{eqnarray*}
Finally, since $w_1= \frac{kr}{2m}$, \begin{eqnarray*}
\mathcal{K}(\tilde{T})-\mathcal{K}(T) 
& = & \dfrac{k(k-1)}{2m+k(k-1)}\left(\alpha_1- \frac{2m(k-1)}{kr}\right) - \dfrac{k-1}{r+k}.
\end{eqnarray*}

\end{proof}

\begin{remark}
We note that the approach in this section borrows from the one given in \cite{hu2019complete} for the change in Kemeny's constant when adding a single edge between two non-adjacent twin vertices. In particular, the proof makes use of equitable partitions in the transition matrix in order to reduce the problem. However, we go further with rewriting the final statement of the result to clarify  the dependence of the change in Kemeny's constant on key parameters such as the accessibility index, the size of the graph, as well as the order and common degree of the independent set.
\end{remark}

\section{Existence of Braess cliques in certain graph families}

\subsection{Graphs with twin pendent vertices}
In this section, we apply the general results regarding equitable partitions of the previous section to graphs which have $k$ pendent vertices attached to some specified vertex $v$. Our goal in this section is to determine the conditions under which the independent set created by the addition of $k$ pendent vertices to a vertex in a graph forms a Braess clique. Given the statement of Theorem~\ref{thm:braess_clique_diff_acc}, this requires us to examine how the accessibility indices of different vertices in the graph $G$ change upon the addition of $k$ pendent vertices to form $\widehat{G}$.

\begin{figure}
\begin{center}
\begin{tikzpicture}[vtx/.style={circle,fill,inner sep=1.6pt},
    bubble/.style={draw,very thick,circle,minimum size=1.5cm},
    ed/.style={line width=1pt},]

\begin{scope}
  \node[bubble] (B0) at (0,0) {};
  \node at (B0.center) {$G$};
\end{scope}

\begin{scope}[xshift=3.5cm]
  % bubble and attachment vertex
  \node[bubble] (B1) at (0,0) {};
  \node at (B1.center) {$G$};
  \node[vtx,label=left:$v$] (a) at (0.75,0) {}; % on the right side of the bubble

  % pendents
  \node[vtx] (b) at (2,0) {};
  \node[vtx] (c) at (1.2,1.1) {};
  \node[vtx] (d) at (1.2, -1.1) {};
  \node[vtx] (e) at (1.85, -0.8) {};

  \node[draw=none,fill=none] at (1.75,0.7) {$\ddots$};

  \draw[ed] (a)--(b);
  \draw[ed] (a)--(c);
  \draw[ed] (a)--(d);
  \draw[ed] (a)--(e);

  \node at (0.75,-1.5) {$%G^*
  \widehat{G}$};
\end{scope}

\begin{scope}[xshift=8.0cm]
  % bubble and base path
  \node[bubble] (B2) at (0,0) {};
  \node at (B2.center) {$G$};
  \node[vtx,label=left:$v$] (a2) at (0.75,0) {};
  \node[vtx] (b2) at (2,0) {};
  \node[vtx] (c2) at (1.2,1.1) {};
  \node[vtx] (d2) at (1.2,-1.1) {};
  \node[vtx] (e2) at (1.85, -0.8) {};
  \node[draw=none,fill=none] at (1.75,0.7) {$\ddots$};

  \draw[ed] (a2)--(b2);
  \draw[ed] (a2)--(c2);
  \draw[ed] (a2)--(d2);
  \draw[ed] (a2)--(e2);

  % orange Kl among the pendent vertices
  \draw[very thick,orange, dashed] (c2)--(e2)--(d2)--(b2)--(e2)--(c2)--(d2);

  \node at (0.75,-1.5) {$\widetilde{G}%G^\circledast
  $};
\end{scope}
\end{tikzpicture}
\end{center}
\caption{A graph $G$ with $k$ twin pendent vertices attached to form $\widehat{G}$. Corollary~\ref{cor:pendent} provides a condition for when the addition of a clique to form $\widetilde{G}$ causes an increase in Kemeny's constant.}
\label{fig:k_pendent}
\end{figure}

The following result is well-known (see \cite[Theorem 6.2.1]{stevesbook}), and is referred to as a triangle inequality for mean first passage times.
\begin{lemma}\label{lem:mfp_triangle}
Let $T$ be an irreducible stochastic matrix, and denote the corresponding mean first passage matrix by $M$. Then for any triple of indices $i,j, k$,   
$m_{i,j} + m_{j,k} \geq m_{i,k},$
with equality if and only if $j$ is distinct from both $i$ and $k$, and in addition, every path from vertex $i$ to vertex $k$ passes through vertex $j$. 
\end{lemma}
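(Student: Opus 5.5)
The plan is to prove the inequality by a first-step decomposition of the first passage time into $k$. Fix indices $i,j,k$ and run the chain from state $i$. Let $\tau_k$ be the first hitting time of $k$ (the first return time if $i=k$), and let $\tau_j$ be the analogous time for $j$. Then $m_{i,k}=\mathbb{E}_i[\tau_k]$, and we will split according to whether the walk reaches $j$ before $k$.

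The central step is the bound $\tau_k \le \tau_j + \tau_k\circ\theta_{\tau_j}$. Here $\theta$ is the shift operator, so $\tau_k\circ\theta_{\tau_j}$ is the time to hit $k$ (strictly after the current step) once the walk has reached $j$. This is the time to go from $i$ to $j$ and then from $j$ to $k$. It need not be the first visit to $k$, which is exactly where the inequality comes from. Taking expectations and using the strong Markov property at $\tau_j$ gives $m_{i,k}\le m_{i,j}+m_{j,k}$. The strong Markov property applies because $\tau_j$ is a stopping time, and it is almost surely finite with finite mean by irreducibility. One should check this against the paper's convention that diagonal entries are return times. If $j=i$, the first trip is a return trip of length $m_{i,i}\ge1$ and the bound still holds. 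If $j=k$, the second leg is a full return time $m_{k,k}>0$.

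For the equality case, rewrite the bound as
\[
m_{i,j}+m_{j,k}-m_{i,k}=\mathbb{E}_i\!\left[(\tau_j+\tau_k\circ\theta_{\tau_j}-\tau_k)\right].
\]
The integrand is nonnegative, so equality holds if and only if it vanishes almost surely.
\begin{itemize}
\item If $j=k$, the integrand is at least the return time $\ge1$, so equality fails.
\item If $j=i\neq k$, the integrand includes $m_{i,i}\ge1$, so equality again fails.
\item If $j$ is distinct from both $i$ and $k$, the integrand vanishes exactly on the event $\{\tau_j<\tau_k\}$, and is positive on the complementary event $\{\tau_k<\tau_j\}$.
\end{itemize}
Note that $\tau_k=\tau_j$ is impossible when $j\neq k$. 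So equality holds if and only if $\mathbb{P}_i(\tau_k<\tau_j)=0$, that is, almost surely the walk hits $j$ before $k$.

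The main obstacle is translating this almost-sure event into the combinatorial statement about paths in the digraph of $T$. The natural approach is the following. If some directed path from $i$ to $k$ avoids $j$ before arriving, then it has positive probability as a finite product of positive entries $t_{ab}$. The walk then hits $k$ before $j$ with positive probability, and the inequality is strict. Conversely, if every path from $i$ to $k$ passes through $j$, then every realization reaching $k$ visits $j$ first, so $\mathbb{P}_i(\tau_k<\tau_j)=0$. Care is needed when $i=k$: paths are then understood as closed walks of positive length. With that reading the characterization is exactly as stated.
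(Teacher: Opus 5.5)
The paper does not prove this lemma. It quotes it as a known result (\cite[Theorem 6.2.1]{stevesbook}), so there is no in-paper argument to compare your route against.

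Your strong-Markov argument is correct and self-contained. The pathwise bound $\tau_k\le\tau_j+\tau_k\circ\theta_{\tau_j}$ gives the inequality. For $j\notin\{i,k\}$ the defect vanishes exactly on $\{\tau_j<\tau_k\}$, so equality is equivalent to $\mathbb{P}_i(\tau_k<\tau_j)=0$. The translation to the path condition, using positive-probability realizations of paths, is sound.

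One wording fix: in the case $j=i\neq k$ the integrand is not pointwise bounded below by a positive quantity. Indeed it vanishes on $\{\tau_i<\tau_k\}$. What is true, and all you need, is that the expected defect is $m_{i,i}+m_{i,k}-m_{i,k}=m_{i,i}\ge 1$.

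Your reading of the case $i=k$ (paths as closed walks of positive length) is exactly the one the paper needs. It later applies the lemma in the form $\widehat{m}_{v_1,v_1}=\widehat{m}_{v_1,v}+\widehat{m}_{v,v_1}$, where equality must hold with $i=k$, so it is good that you flagged this.

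Your decomposition also gives more than the lemma. Apply the strong Markov property once more at $\tau_k$ on the event $\{\tau_k<\tau_j\}$, which is both $\mathcal{F}_{\tau_k}$- and $\mathcal{F}_{\tau_j}$-measurable. This yields, for $j\notin\{i,k\}$ (including $i=k$), the exact identity $m_{i,j}+m_{j,k}-m_{i,k}=\mathbb{P}_i(\tau_k<\tau_j)\,(m_{k,j}+m_{j,k})$. The equality characterization is immediate from it, and it quantifies how far the triangle inequality is from tight.
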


\begin{lemma}
Let $G$ be a graph with $m$ edges and let $v\in V(G)$. If $\widehat{G}$ is the graph formed by adding $k$ pendent vertices $\{v_1, v_2, \ldots, v_k\}$ to $v$ (as in Figure~\ref{fig:k_pendent}), then 
\[\alpha_{\widehat{G}}(v_i) = \frac{2m}{2m+2k}\alpha_G(v) + \frac{(2m+2k-1)^2}{2m+2k} + \frac{k-1}{2m+2k}.\]
\end{lemma}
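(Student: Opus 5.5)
The plan is to compute $\alpha_{\widehat{G}}(v_i)$ directly from its definition,
\[
\alpha_{\widehat{G}}(v_i)=\sum_{u\neq v_i} w_u\, m_{u,v_i},
\]
in the graph $\widehat{G}$. This graph has $m+k$ edges, so $w_u=\deg_{\widehat{G}}(u)/(2m+2k)$. The key structural fact is that $v_i$ is pendent at $v$: every walk from any $u\neq v_i$ to $v_i$ must pass through $v$. For $u\neq v,v_i$, Lemma~\ref{lem:mfp_triangle} then gives the equality case
\[
m_{u,v_i}=m_{u,v}+m_{v,v_i}.
\]
So the task reduces to computing two things: $m_{v,v_i}$, and $m_{u,v}$ in $\widehat{G}$.

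First, I compute $m_{v,v_i}$. The return time to $v_i$ is $m_{v_i,v_i}=1/w_{v_i}=2m+2k$. Every return consists of one step to $v$ followed by a passage from $v$ back to $v_i$, so $m_{v,v_i}=2m+2k-1$.

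Second, I handle $m_{u,v}$. For $u\in V(G)\setminus\{v\}$, the walk in $\widehat{G}$ before it first hits $v$ never sees the pendent vertices, since they are attached only at $v$. Hence $m^{\widehat{G}}_{u,v}=m^{G}_{u,v}$. For a pendent twin $v_j$ with $j\neq i$, clearly $m_{v_j,v}=1$.

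Next, I split the sum into three groups:
\begin{itemize}
\item the vertices $u\in V(G)\setminus\{v\}$, each with weight $\deg_G(u)$;
\item the vertex $v$ itself, with weight $\deg_G(v)+k$;
\item the $k-1$ other pendent vertices $v_j$, each with weight $1$.
\end{itemize}
Multiplying through by $2m+2k$, this gives
\[
\sum_{u\in V(G)\setminus\{v\}}\deg_G(u)\,m^G_{u,v}\;+\;\Big(\sum_{u\in V(G)\setminus\{v\}}\deg_G(u)+\deg_G(v)+k\Big)(2m+2k-1)\;+\;(k-1)\cdot 1 .
\]
Here the last term accounts for the extra single step $m_{v_j,v}=1$ from each other pendent vertex; each such vertex also contributes $m_{v,v_i}=2m+2k-1$, which I have included in the middle bracket.

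The first sum equals $2m\,\alpha_G(v)$ by the definition of $\alpha_G$ with $w_u=\deg_G(u)/2m$. The degree sum in the middle bracket needs one correction. As written it is $2m+k$, but the $k-1$ other pendent vertices also contribute their own $2m+2k-1$ each, so the coefficient of $2m+2k-1$ is
\[
(2m+k)+(k-1)=2m+2k-1.
\]
Altogether,
\[
(2m+2k)\,\alpha_{\widehat{G}}(v_i)=2m\,\alpha_G(v)+(2m+2k-1)^2+(k-1).
\]
Dividing by $2m+2k$ gives exactly the claimed formula.

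The only delicate point is justifying that first passage times to $v$ from vertices of $G$ are unchanged when the pendents are added. I would argue this via the first-step recursion $M=T(M-M_{dg})+J$ restricted to columns indexed by $v$. Before absorption at $v$, the rows of $\widehat{T}$ for vertices in $V(G)\setminus\{v\}$ coincide with those of $T_G$. The rest is bookkeeping of the degree weights.
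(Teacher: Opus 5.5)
Your proof is correct and follows essentially the same route as the paper. You split the sum over $V(G)\setminus\{v\}$, $v$, and the other $k-1$ pendents, use the equality case of Lemma~\ref{lem:mfp_triangle} to write $\widehat{m}_{u,v_i}=m_{u,v}+\widehat{m}_{v,v_i}$, and get $\widehat{m}_{v,v_i}=2m+2k-1$ from the return time $1/w_{v_i}=2m+2k$. Your first-step-recursion argument that $\widehat{m}_{u,v}=m_{u,v}$ for $u\in V(G)\setminus\{v\}$ makes explicit a step the paper leaves implicit. One presentational fix: write the coefficient of $2m+2k-1$ directly as $\sum_{u\neq v}\deg_G(u)+(\deg_G(v)+k)+(k-1)=2m+2k-1$, instead of first claiming the pendent contributions are already in the bracket and then correcting it.
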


\begin{proof}
Let $G$ be a graph with $m$ edges, and $\widehat{G}$ the graph formed by adding $k$ pendents  $\{v_1, v_2, \ldots, v_k\}$ to some $v \in V(G)$. Then 
\begin{eqnarray*} 
\alpha_{\widehat{G}}(v_1) & = & \sum_{\substack{u\in V(\widehat{G})\\u\neq v_1}} \widehat{w}_u \widehat{m}_{u, v_1}  =  \sum_{\substack{u\in V(\widehat{G})\\u\neq v_1}} \frac{\deg_{\widehat{G}}(u)}{2m+2k} \widehat{m}_{u, v_1} \\
& = & \sum_{\substack{u\in V(G)\\u\neq v}} \frac{\deg_{\widehat{G}}(u)}{2m+2k} \widehat{m}_{u, v_1} + \sum_{i=2}^k \frac{1}{2m+2k} \widehat{m}_{v_i, v_1} + \frac{\deg_{\widehat{G}}(v)}{2m+2k} \widehat{m}_{v, v_1}.
\end{eqnarray*}
We now rewrite as many of these terms as possible in terms of quantities for the random walk on $G$. Note that for $u\in V(G)$, $u\neq v$, $\deg_{\widehat{G}}(u) = \deg_G(u)$, and since every walk from $u$ to $v_1$ in $\widehat{G}$ must pass through $v$, by Lemma~\ref{lem:mfp_triangle}, $\widehat{m}_{u, v_1} = m_{u, v} + \widehat{m}_{v, v_1}.$ Similarly, for $i\neq 1$, 
\begin{equation}\label{eq:onup}\widehat{m}_{v_i, v_1} = \widehat{m}_{v_i, v} + \widehat{m}_{v, v_1} = 1 + \widehat{m}_{v, v_1}.\end{equation}
Finally, it is clear that $\deg_{\widehat{G}}(v) = \deg_{G}(v) + k$. Hence
\begin{eqnarray*}
\alpha_{\widehat{G}}(v_1) & = &  \sum_{\substack{u\in V(G)\\u\neq v}} \frac{\deg_{G}(u)}{2m+2k} (m_{u, v} + \widehat{m}_{v, v_1} ) + \sum_{i=2}^k \frac{1}{2m+2k} \left(1+ \widehat{m}_{v, v_1}\right)+ \frac{\deg_{\widehat{G}}(v)}{2m+2k} \widehat{m}_{v, v_1}\\
& = & \sum_{\substack{u\in V(G)\\u\neq v}} \frac{\deg_{G}(u)}{2m+2k}m_{u, v} + \frac{\widehat{m}_{v, v_1}}{2m+2k}\left( \sum_{\substack{u\in V(G)\\u\neq v}} \deg_G(u) + (k-1) + \deg_G(v) + k\right) + \frac{k-1}{2m+2k}\\
& = & \frac{2m}{2m+2k}\sum_{\substack{u\in V(G)\\u\neq v}} \frac{\deg_{G}(u)}{2m}m_{u, v} + \frac{2m+2k-1}{2m+2k} \widehat{m}_{v, v_1} + \frac{k-1}{2m+2k}.
\end{eqnarray*}
It remains to determine the value of $\widehat{m}_{v, v_1}$. By Lemma~\ref{lem:mfp_triangle}, $\widehat{m}_{v_1, v_1} = 
%& = & 
\widehat{m}_{v_1, v} + \widehat{m}_{v, v_1}$. %\\
Since in general $m_{i,i}=\frac{1}{w_i}$,  using Equation~(\ref{eq:onup}), 
\begin{eqnarray*}
\frac{1}{\left( \frac{\deg_{\widehat{G}}(v_1)}{2m+2k}\right)} & = & 1 + \widehat{m}_{v, v_1} \qquad {\rm{\ so\ that\ \qquad }}
\widehat{m}_{v, v_1}  =  2m+2k - 1.
\end{eqnarray*}
Therefore
\[\alpha_{\widehat{G}}(v_i) = \frac{2m}{2m+2k}\alpha_G(v) + \frac{(2m+2k-1)^2}{2m+2k} + \frac{k-1}{2m+2k}.\]
\end{proof}

\begin{lemma}
Let $G$ be a graph with $m$ edges and let $v\in V(G)$. If $\widehat{G}$ is the graph formed by adding $k$ pendent vertices to $v$, then 
\[\alpha_{\widehat{G}}(v) = \frac{2m}{2m+2k} \alpha_G(v) + \frac{k}{2m+2k}.\]
\end{lemma}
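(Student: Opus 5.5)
We follow the template of the preceding lemma: expand $\alpha_{\widehat{G}}(v)$ directly from the definition and rewrite each mean first passage time in $\widehat{G}$ in terms of quantities for the random walk on $G$. Write
\[
\alpha_{\widehat{G}}(v) = \sum_{\substack{u\in V(G)\\u\neq v}} \frac{\deg_G(u)}{2m+2k}\,\widehat{m}_{u,v} + \sum_{i=1}^k \frac{1}{2m+2k}\,\widehat{m}_{v_i,v},
\]
using that $\widehat{G}$ has $m+k$ edges, that degrees of vertices in $V(G)\setminus\{v\}$ are unchanged, and that each pendent vertex $v_i$ has degree $1$.

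For the pendent terms, $\widehat{m}_{v_i,v}=1$, since the only neighbour of $v_i$ is $v$. These terms therefore contribute $\frac{k}{2m+2k}$.

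For $u\in V(G)\setminus\{v\}$, we claim $\widehat{m}_{u,v}=m_{u,v}$. A random walk started at $u$ and stopped at its first visit to $v$ never leaves $V(G)$ before that time, because the pendent vertices are reachable only through $v$. On $V(G)\setminus\{v\}$ the transition probabilities of $\widehat{G}$ coincide with those of $G$, since those vertices keep their degrees and neighbourhoods. Hence the first-passage time distribution to $v$ is the same in both chains.

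To make this rigorous, we use the recursion $M=T(M-M_{dg})+J$ restricted to column $v$. The vector $(\widehat{m}_{u,v})_{u\neq v}$ satisfies $x = \mathbbm{1} + T_{[v]}x$, where $T_{[v]}$ is the principal submatrix of $T$ with row and column $v$ deleted. In $\widehat{G}$ the rows indexed by $V(G)\setminus\{v\}$ have zero entries in the pendent columns, so this system decouples. The $V(G)\setminus\{v\}$ block of $\widehat{T}_{[v]}$ equals that of $T_{[v]}$ for $G$, and it is substochastic and irreducible-to-$v$, so $I-T_{[v]}$ is invertible. By uniqueness, the two solutions agree. This justification is the only real care point of the proof.

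Combining the two contributions,
\[
\alpha_{\widehat{G}}(v) = \frac{2m}{2m+2k}\sum_{\substack{u\in V(G)\\ u\neq v}}\frac{\deg_G(u)}{2m}\,m_{u,v} + \frac{k}{2m+2k} = \frac{2m}{2m+2k}\,\alpha_G(v) + \frac{k}{2m+2k},
\]
which is the claim.
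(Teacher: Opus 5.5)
Your proof is correct and follows the paper's argument: you split the defining sum for $\alpha_{\widehat{G}}(v)$ into the vertices of $G$ and the pendent vertices, use $\widehat{m}_{v_i,v}=1$, and identify the first part with $\frac{2m}{2m+2k}\alpha_G(v)$. The only difference is that you explicitly justify $\widehat{m}_{u,v}=m_{u,v}$ for $u\in V(G)\setminus\{v\}$ through the decoupled first-passage system and uniqueness, whereas the paper uses this equality without comment.
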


\begin{proof}
Let $G$ be a graph with $m$ edges and with some vertex $v$. Let $v_1, v_2, \ldots, v_k$ denote the $k$ pendent vertices adjacent to $v$ in $\widehat{G}$. Then 
\begin{eqnarray*}
\alpha_{\widehat{G}}(v) & = & \sum_{\substack{u\in V(\widehat{G})\\u\neq v}} \frac{\deg_{\widehat{G}}(u)}{2m+2k} \widehat{m}_{u, v} %\\
%& = & 
= \sum_{\substack{u\in V(G)\\ u\neq v}} \frac{\deg_{\widehat{G}}(u)}{2m+2k} \widehat{m}_{u, v} + \sum_{i=1}^k \frac{1}{2m+2k} \widehat{m}_{v_i, v}\\
& = & \frac{2m}{2m+2k} \sum_{\substack{u\in V(G)\\ u\neq v}} \frac{\deg_{G}(u)}{2m} m_{u, v} + \sum_{i=1}^k \frac{1}{2m+2k} \widehat{m}_{v_i, v} %\\
%& = & 
= \frac{2m}{2m+2k} \alpha_G(v) + \frac{k}{2m+2k},
\end{eqnarray*}
where the last line follows by definition of the accessibility index, and the fact that $\widehat{m}_{v_i, v} = 1$ for each $i\in\{ 1, \ldots, k\}$.
\end{proof}

\begin{theorem}\label{thm:kem_diff_k_pendent}
Let $G$ be a graph with $m$ edges, and let $v \in V(G)$. Let $\alpha_G(v)$ denote the accessibility index of $v$ in $G$. 
Let $\widehat{G}$ be the graph obtained by gluing $k$ pendent vertices to $v$, and let $T$ be the transition matrix for the random walk on $\widehat{G}$. Let $\tilde{G}$ be the graph obtained from $\widehat{G}$ by adding all possible edges between the $k$ pendent vertices, and $\tilde{T}$ the transition matrix for the random walk on $\tilde{G}$. Then 
\begin{eqnarray*}
\mathcal{K}(\tilde{T}) - \mathcal{K}(T) & = & \frac{k(k-1)}{2(m+k)+k(k-1)}\left(\frac{m}{m+k}\alpha_G(v) + \frac{k-1}{2(m+k)} + \frac{(2m+2k-1)^2}{2(m+k)}\right) \\
& & \qquad \qquad  - \frac{2(m+k)(k-1)^2}{2(m+k)+k(k-1)} - \frac{k-1}{k+1}.\end{eqnarray*}
\end{theorem}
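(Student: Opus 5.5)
This is a direct substitution into Theorem~\ref{thm:braess_clique_diff_acc}, applied to the graph $\widehat{G}$ rather than $G$. In $\widehat{G}$ the $k$ added pendent vertices $v_1,\ldots,v_k$ form an independent set of twin vertices: each has neighbour set $\{v\}$. So the parameters are order $k$, common degree $r=1$, and size $\widehat{m}=m+k$. With $T$ the transition matrix of $\widehat{G}$ and $\tilde{T}$ that of $\tilde{G}$, Theorem~\ref{thm:braess_clique_diff_acc} gives
\[
\mathcal{K}(\tilde{T})-\mathcal{K}(T)=\frac{k(k-1)}{2(m+k)+k(k-1)}\left(\alpha_{\widehat{G}}(v_1)-\frac{2(m+k)(k-1)}{k}\right)-\frac{k-1}{1+k}.
\]

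The next step is to replace $\alpha_{\widehat{G}}(v_1)$ using the earlier lemma on the accessibility index of an added pendent vertex:
\[
\alpha_{\widehat{G}}(v_1)=\frac{2m}{2m+2k}\alpha_G(v)+\frac{(2m+2k-1)^2}{2m+2k}+\frac{k-1}{2m+2k}.
\]
Rewriting $\frac{2m}{2m+2k}=\frac{m}{m+k}$, this matches exactly the bracketed expression in the statement of Theorem~\ref{thm:kem_diff_k_pendent}.

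It remains to distribute the prefactor over the subtracted term inside the bracket. We have
\[
\frac{k(k-1)}{2(m+k)+k(k-1)}\cdot\frac{2(m+k)(k-1)}{k}=\frac{2(m+k)(k-1)^2}{2(m+k)+k(k-1)},
\]
which is the middle term of the claimed formula. The final term $-\frac{k-1}{k+1}$ comes directly from $-\frac{k-1}{r+k}$ with $r=1$.

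There is no real obstacle here. The only points needing care are confirming that the hypotheses of Theorem~\ref{thm:braess_clique_diff_acc} hold for $\widehat{G}$, namely that the pendent vertices are independent twins with common degree $1$, and using the size $m+k$ of $\widehat{G}$ rather than $m$. After that the argument is bookkeeping of the algebra.
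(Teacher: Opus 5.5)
Your proposal is correct and follows the paper's route. The paper gives no separate proof of this theorem; it is obtained exactly as you do it, by applying Theorem~\ref{thm:braess_clique_diff_acc} to $\widehat{G}$ with $r=1$ and size $m+k$, substituting the preceding lemma's formula for $\alpha_{\widehat{G}}(v_1)$, and distributing the prefactor over $\frac{2(m+k)(k-1)}{k}$.
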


\begin{corollary}\label{cor:pendent}
Let $G$ be a connected graph with $m$ edges and $v\in V(G)$. 
Let $\widehat{G}$ be the graph obtained by attaching $k$ pendent vertices to $v$. A sufficient condition for the resulting independent set to form a Braess clique is that 
\[m > \frac{k+1}{4}.\]
\end{corollary}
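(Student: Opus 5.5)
The plan is to start from the exact formula in Theorem~\ref{thm:kem_diff_k_pendent}, reduce the sign condition to one inequality, and then use the bound $\alpha_G(v)\ge \frac12$ from Lemma~\ref{lem:alpha}. We assume $k\ge 2$; for $k=1$ no edge is inserted and there is nothing to prove. Write $N=2(m+k)$ and $D=N+k(k-1)$, and note that $D>0$ and $k-1>0$. Then $\mathcal{K}(\tilde T)-\mathcal{K}(T)>0$ holds if and only if $\frac{D}{k-1}\bigl(\mathcal{K}(\tilde T)-\mathcal{K}(T)\bigr)>0$. By Theorem~\ref{thm:kem_diff_k_pendent}, this quantity equals
\[
k\left(\frac{2m}{N}\alpha_G(v)+\frac{k-1}{N}+\frac{(N-1)^2}{N}\right)-N(k-1)-\frac{D}{k+1}.
\]

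First I simplify this expression algebraically. Expanding $k(N-1)^2/N=kN-2k+k/N$, the bracketed terms combine with $-N(k-1)$ to give
\[
N-2k+\frac{k\bigl(2m\alpha_G(v)+k\bigr)}{N}=2m+\frac{k\bigl(2m\alpha_G(v)+k\bigr)}{2(m+k)}.
\]
For the last term, $D=2m+k^2+k$, so $\frac{D}{k+1}=\frac{2m}{k+1}+k$. After dividing through by $k$, the Braess condition becomes exactly
\[
\frac{2m}{k+1}+\frac{2m\,\alpha_G(v)+k}{2(m+k)}>1.
\]
I expect the main risk to be this bookkeeping, in particular keeping the factor $\frac{D}{k-1}$ correct and spotting that the $2m$ terms partially cancel. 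I would double-check the reduction on a small case, such as $G=K_2$ with $k=2$.

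Next I apply Lemma~\ref{lem:alpha}, which gives $\alpha_G(v)\ge\frac12$ for every connected $G$. This yields $2m\alpha_G(v)+k\ge m+k$, so the second fraction is at least $\frac{m+k}{2(m+k)}=\frac12$. It therefore suffices that $\frac{2m}{k+1}>\frac12$, which is equivalent to $m>\frac{k+1}{4}$. Under this hypothesis the inequality is strict, so $\mathcal{K}(\tilde T)>\mathcal{K}(T)$ and the independent set of pendent vertices is a Braess clique.
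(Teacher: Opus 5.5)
Your proposal is correct and follows the paper's own route: it reduces the sign of the difference in Theorem~\ref{thm:kem_diff_k_pendent} to an inequality in $\alpha_G(v)$ and then applies $\alpha_G(v)\ge\frac12$ from Lemma~\ref{lem:alpha}. The only difference is how the algebra is organized. The paper isolates $\alpha_G(v)$ against the threshold $\frac{k^2-2km-4m^2+k+2m}{2m(k+1)}$ and compares that threshold to $\frac12$ through the quadratic $4m^2+(3k-1)m-(k^2+k)\ge 0$. Your normalized form $\frac{2m}{k+1}+\frac{2m\alpha_G(v)+k}{2(m+k)}>1$ is equivalent; I checked it, including on $G=K_2$ with $k=2$, where it gives the difference $\frac{1}{24}$ from Figure~1. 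It yields the linear condition $m>\frac{k+1}{4}$ directly.
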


\begin{proof}
Suppose that $\mathcal{K}(\tilde{T}) - \mathcal{K}(T)\geq 0$. Then rearranging the expression in Theorem~\ref{thm:kem_diff_k_pendent} for $\alpha_G(v)$ gives
\[ 
\alpha_G(v)  \geq  \frac{k^2-2km-4m^2+k+2m}{2m(k+1)}.
\] 
 Note that $\alpha_G(v) \geq \frac12$ by Lemma~\ref{lem:alpha}.
Note that 
 \[\frac{k^2-2km-4m^2+k+2m}{2m(k+1)} \leq \frac{1}{2}\]
 if and only if
 $4m^2 + (3k-1)m-(k^2+k)\geq 0.$
 The roots of this quadratic are $m=\frac14(k+1)$ and $m=-k$. Thus a sufficient condition for the addition of $k$ pendent twins to form a Braess clique is that 
$m>\frac{k+1}{4}.$
\end{proof}

Given a graph $G$ that has a set of $k$ pendent vertices, Theorem~\ref{thm:braess_clique_diff_acc} already provides an expression for the change in Kemeny's constant upon addition of the edges of a clique. This expression can be analyzed to determine when this clique is Braess. However, our goal for this section was to consider a graph operation by which we could create the circumstances and structure of a Braess clique in a given graph. Any restrictions on the given graph would have then given some insight into the intuition governing the existence of a Braess clique in a graph. Somewhat surprisingly, the sufficient condition we produce is not very restrictive. Adding multiple edges between pendent twins was considered in \cite[Section 4.1]{faught20221}, without a specific  focus on cliques.

We also note 
that our results in this section extend the results of \cite{ciardo2020braess}. 
The proof of \cite[Theorem 2.2]{ciardo2020braess} and the results in \cite{faught20221} use techniques from resistance distances in graphs; our results focus on the accessibility index. 

In \cite{kirkland2016kemeny} it was demonstrated that almost all trees have a Braess edge.
In \cite{ciardo2020braess}, it was demonstrated that almost every connected planar labeled graph has a Braess edge. The argument can be extended to a Braess $K_\ell$ for $\ell\geq 2$ using the results of this section by simply replacing the graph $P_3$ in the argument in \cite{ciardo2020braess} with a star graph on $\ell+1$ vertices.  
\begin{corollary}
    Given $\ell\geq 2,$ almost every connected planar labeled graph has
    a Braess $K_\ell$.
\end{corollary}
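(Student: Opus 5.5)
The plan follows the argument of \cite{ciardo2020braess} for Braess edges in planar graphs, substituting the star $K_{1,\ell}$ for $P_3$. The key tool is the appearance theorem of McDiarmid, Steger and Welsh. It says that for any fixed connected planar graph $H$, almost every connected labelled planar graph on $n$ vertices contains a \emph{pendant copy} of $H$ as $n\to\infty$. Here a pendant copy is an induced copy of $H$ joined to the rest of the graph by a single edge (a bridge) incident to exactly one vertex of the copy.

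Take $H=K_{1,\ell}$, whose centre we call $c$ and whose leaves are $v_1,\dots,v_\ell$. Attach a new root $x$ to $c$ and view the rooted star as the pendant piece. Then almost every connected planar labelled graph $\Gamma$ contains vertices $c,v_1,\dots,v_\ell$ with the following properties. The $v_i$ have degree $1$ and are adjacent only to $c$. The vertex $c$ has exactly one further neighbour $x$, and the edge $cx$ is a bridge to the rest of the graph.

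Set $G=\Gamma-\{v_1,\dots,v_\ell\}$. This $G$ is connected and contains at least the edge $cx$, and $\Gamma$ is exactly the graph $\widehat{G}$ obtained by attaching $k=\ell$ pendent vertices to $c$. Corollary~\ref{cor:pendent} then says that $\{v_1,\dots,v_\ell\}$ is a Braess clique as soon as $m=|E(G)|>\frac{\ell+1}{4}$.

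To check this condition, note that $\Gamma$ is connected, so $m\ge |V(G)|-1=n-\ell-1$. For fixed $\ell$ this exceeds $(\ell+1)/4$ once $n$ is large. So the condition holds for all sufficiently large $n$, and the exceptional graphs form a vanishing proportion. The case $\ell=2$ recovers the Braess-edge result, and adding the clique keeps the graph planar only for $\ell\le 4$, but planarity of the modified graph is not required.

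The main obstacle is quoting the appearance theorem in the correct form. I need a version that yields pendant copies of a fixed rooted connected planar graph attached by a bridge at the root. That is precisely what makes $\Gamma=G\oplus$ (pendent star at $c$) with the leaves being independent twins of degree $1$. I would cite it exactly as it is used in \cite{ciardo2020braess}, where the pendant $P_3$ gives two pendent twins. Beyond that, the only verification is that the hypothesis of Corollary~\ref{cor:pendent} depends only on the edge count, which grows linearly in $n$.
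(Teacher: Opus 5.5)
Your proposal is correct and is the argument the paper sketches. The paper reruns Ciardo's appearance-theorem proof with the star on $\ell+1$ vertices, rooted at its centre, in place of $P_3$, and then applies Corollary~\ref{cor:pendent}. Its hypothesis $m>\frac{\ell+1}{4}$ holds automatically for large $n$ because $G$ is connected with about $n-\ell$ vertices. Your write-up usefully makes explicit the two details the paper leaves implicit: the bridge must attach at the centre of the star, and $|E(G)|$ grows linearly in $n$.
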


\subsection{Complete bipartite graphs}
In  \cite{hu2019complete}, it was demonstrated that a complete bipartite graph $K_{\ell,n-\ell}$ has a Braess edge ($K_2$) in the part of order 
$\ell$ if and only if $n<\frac{7\ell-8}{3}$.
In this section, we extend  this result to Braess cliques.

It is known (see e.g. \cite{G2016}) that for a complete bipartite graph $K_{a,b}$ having parts
$A$ and $B$ with $|A|=a$ and $|B|=b,$ the resistance distance between a pair of vertices is 
\begin{equation*}
r(x,y)=\left\{ 
\begin{array}{cl}
\frac{2}{a}& \text{if } x,y\in B\\[6pt]
\frac{2}{b}& \text{if } x,y\in A\\[6pt]
\frac{a+b-1}{ab}& \text{if } x \text{ is adjacent to } y.\\
\end{array}
\right.
\end{equation*}
Thus for $v\in A$,
\begin{eqnarray*}
\mu_{K_{a,b}}(v)&=&\sum_{\substack{i\in A\\i\neq v}} d_i r(i,v) +\sum_{i\in B} d_i r(i,v)
= (a-1)b\left(\frac{2}{b}\right)+ba\left(\frac{a+b-1}{ab}\right)=3a+b-3.
\end{eqnarray*}
Likewise, for $v\in B$, $\mu_{K_{a,b}}(v)=3b+a-3.$ Further, as noted in \cite{KD}, 
\begin{equation}\label{eq:Kbip}
\mathcal{K}(K_{a,b})=a+b-\frac{3}{2}.
\end{equation}
Therefore, using~(\ref{eq:amu}),
\begin{equation}\label{eq:alphabip}
\alpha_{K_{a,b}}(v) = \left\{
\begin{array}{cl}
\frac{4a-3}{2} & \text{if } v\in A \\[6pt]
\frac{4b-3}{2} & \text{if } v\in B.
\end{array}
\right.
\end{equation}
\begin{theorem}\label{thm:bipart}
Let $G$ be the complete bipartite graph $K_{\ell, n-\ell}$, $2\leq \ell \leq n-1$, and let $2\leq k\leq \ell$. Then $G$ has a Braess clique of order $k$ in the part of order $\ell$ if and only if 
\[k< \frac{7\ell-3n+2}{5}.\]
\end{theorem}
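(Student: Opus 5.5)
The plan is to apply Theorem~\ref{thm:braess_clique_diff_acc} directly. In $G=K_{\ell,n-\ell}$, let $A$ be the part of order $\ell$. Any $k$ vertices of $A$ form an independent set of twin vertices, since every vertex of $A$ has neighbourhood exactly the other part. So the hypotheses of Theorem~\ref{thm:braess_clique_diff_acc} hold with
\[
r=n-\ell,\qquad m=\ell(n-\ell),\qquad \alpha_1=\frac{4\ell-3}{2},
\]
where the value of $\alpha_1$ comes from \eqref{eq:alphabip} with $a=\ell$. Inserting a $K_k$ on these vertices gives a Braess clique exactly when $\mathcal{K}(\tilde T)-\mathcal{K}(T)>0$. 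This is the sense of ``increase'' used in the statement, which is why the inequality is strict. The result does not depend on which $k$ vertices of $A$ are chosen, so the whole proof reduces to a sign computation.

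First I simplify the bracket in Theorem~\ref{thm:braess_clique_diff_acc}. Since $2m/(kr)=2\ell/k$,
\[
\alpha_1-\frac{2m(k-1)}{kr}=\frac{4\ell-3}{2}-\frac{2\ell(k-1)}{k}=\frac{4\ell-3k}{2k}.
\]
Substituting, the difference becomes
\[
\frac{(k-1)(4\ell-3k)}{2\big(2m+k(k-1)\big)}-\frac{k-1}{r+k}.
\]
Both denominators are positive and $k-1>0$. So the difference is positive if and only if
\[
(4\ell-3k)(r+k)>2\big(2\ell r+k(k-1)\big).
\]

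Next I expand both sides. The $4\ell r$ terms cancel, leaving $4\ell k-3kr-5k^2+2k>0$. Dividing by $k>0$ gives $4\ell-3r-5k+2>0$. Substituting $r=n-\ell$, this is $5k<7\ell-3n+2$, which is the claimed condition. Every step is an equivalence, so both directions of the ``if and only if'' follow together.

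The main place to be careful is the algebraic simplification of the bracket and the clearing of denominators. I need to check that each quantity I divide by or multiply through is positive: $k-1$, $k$, $r+k$ and $2m+k(k-1)$ all are, given $2\le k\le \ell$ and $\ell\le n-1$. As a sanity check, setting $k=2$ should recover the Braess edge criterion of \cite{hu2019complete}. It does: $10<7\ell-3n+2$ is equivalent to $n<\frac{7\ell-8}{3}$.
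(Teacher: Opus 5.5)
Your proposal is correct and is essentially the paper's proof: substitute $r=n-\ell$, $m=\ell(n-\ell)$ and $\alpha_1=\frac{4\ell-3}{2}$ into Theorem~\ref{thm:braess_clique_diff_acc}, then reduce the sign of the difference to the sign of $7\ell-3n+2-5k$. Your simplification also gives the correct denominator factor $r+k=n-\ell+k$; the paper's displayed factor $n+k-1$ is a slip, but it does not affect the sign argument.
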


\begin{proof}
Let $A$ and $B$ denote the two partite sets of $K_{\ell, n-\ell}$ of orders $\ell$ and $n-\ell$, respectively. 
By (\ref{eq:alphabip}), 
 $\alpha(v) = \frac{4l-3}{2}$ for $v\in A$.  The common degree of vertices in the set $A$ is $n-\ell$. Thus by
 Theorem~\ref{thm:braess_clique_diff_acc},  
\begin{eqnarray*}
\mathcal{K}(\tilde{T}) - \mathcal{K}(T) & = & \frac{k(k-1)}{2\ell(n-\ell)+k(k-1)}\left(\frac{4\ell-3}{2} - \frac{2\ell(n-\ell)(k-1)}{k(n-\ell)}\right) - \frac{k-1}{n-\ell+k}\\
& = & \frac{-k(k-1)(5k-7\ell+3n-2)}{2(k(k-1)+2\ell(n-\ell))(n+k-1)}.
\end{eqnarray*}
This rational expression is positive 
exactly when the factor   
$-(5k-7\ell +3n-2)$ is positive. 
That is, there is an increase in Kemeny's constant if and only if 
$5k < {7\ell-3n+2}.$
\end{proof}

One way to understand the known results regarding Braess edges in \cite{hu2019complete} is by observing that $K_{\ell, n-\ell}$ has a Braess edge in the part of order $\ell$ if and only if \[\ell>\tfrac{3}{4}(n-\ell) + 2;\]
that is, when the part in which the edge is placed is at least a little larger than three-quarters the size of the other part. If one of the partite sets is too small compared to the other, then that set does not contain a Braess edge.

Similarly, the conclusion of Theorem \ref{thm:bipart} may be investigated further by rewriting the condition in several ways. In particular, the complete bipartite graph $K_{\ell, n-\ell}$ has a Braess clique of order $k\leq \ell$ in the part of order $\ell$ if and only if 
\[k \leq \tfrac45 \ell -\tfrac35(n-\ell) + \tfrac25.\]
Alternatively, the complete bipartite graph $K_{a, b}$ has a Braess clique of order $k\leq a$ in the part of order $a$ if and only if 
\[k \leq \tfrac45 a -\tfrac35 b + \tfrac25.\]
If $b=ra$ (that is, $b$ is some linear multiple of $a$),  $K_{a, ra}$ has a Braess clique of order $k\leq a$ in the part of order $a$ if and only if $k\leq \frac{4-3r}{5} a + \frac25$. If $b$ is approximately $a$, then a Braess clique can take up approximately one-fifth of the vertices in one partite set. If $b$ is significantly larger than $a$, then there is no Braess clique on the smaller side. If $b$ is smaller than $a$, the maximum size of a clique which is Braess is larger (for example, if $b=\frac12 a$, up to approximately half of the vertices in the larger side create a Braess clique). 

\begin{example}
While the graph $G=K_{90,10}$ has many Braess edges, it also has many Braess cliques. In this graph $G$, for cliques up to order 32, inserting a larger clique gives a larger increase in Kemeny's constant. In some sense, we could say the larger cliques are more Braess than the small cliques up to a maximum.
In particular, the expression for the difference in Kemeny's constant in the proof of Theorem~\ref{thm:bipart} can be used to show that inserting a clique of order 33 in $G$ will have the largest increase in Kemeny's constant. See Figure~\ref{fig:cb_clique} for a plot of the difference in Kemeny's constant when inserting a clique of order $k$, $2\leq k\leq \ell,$ into 
$K_{\ell, 100-\ell}$ for $\ell=75, 80, 85, 90$.
\end{example}
\begin{figure}[h]
    \begin{center}
    \includegraphics[width=5in]{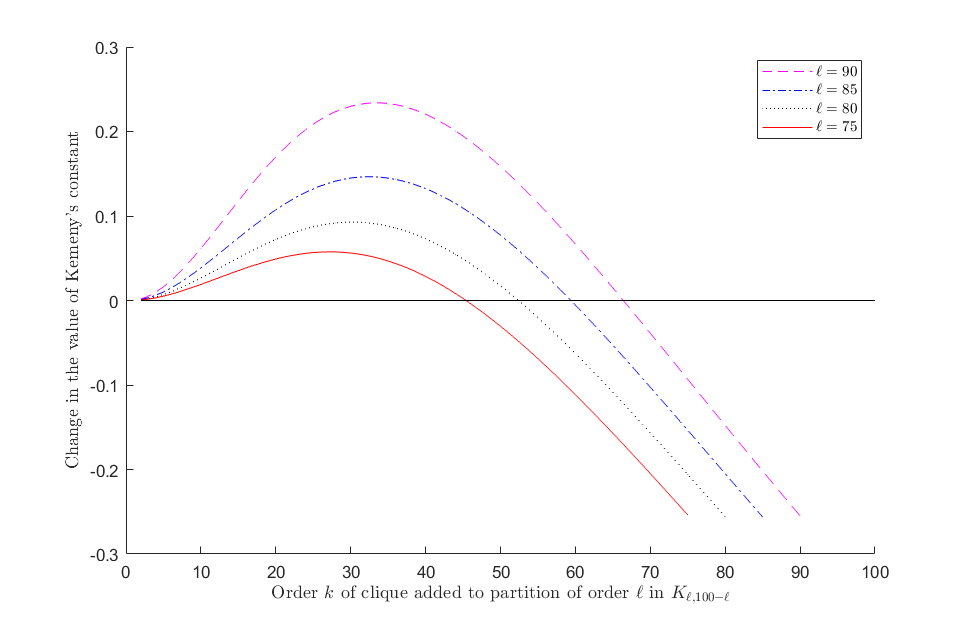}
    \end{center}
    \caption{Difference in Kemeny's constant when adding different size cliques to $K_{\ell, 100-\ell}$}\label{fig:cb_clique}
\end{figure}

\section{Interplay of Braess edges and Braess cliques}

In the previous section, we observed the existence of Braess cliques of size $\ell$ for some $\ell>2$. In the case of bipartite graphs, Theorem~\ref{thm:bipart}, and also
in the graphs with pendent twins, Corollary~\ref{cor:pendent}, the Braess cliques were composed of edges, each of which is a Braess edge in the original graph. This prompts natural questions about the composition of Braess cliques  cliques of order $\ell>2$. For example, is this a method for locating Braess cliques of order $\ell>2$? In this section we observe that not every clique of Braess edges will be a Braess clique. We will also see that a Braess clique 
could include some edges that are not Braess edges. In fact, there are also Braess cliques which have no edges that are Braess edges. 
The goal we pursue is to further our understanding of the behaviour of Kemeny's constant of a graph, and when the addition of edges or cliques causes counterintuitive behaviour, as well as determining the features in such graphs on which this behaviour depends.

In the next theorem, we demonstrate that %\begin{remark}
    a coclique  of $G$ consisting of Braess edges is not necessarily a Braess clique for $G$.
%\end{remark}

\begin{theorem}\label{thm: K2,m}
For $\ell \geq 4$, the complete bipartite graph $K_{\ell,2}$ has a coclique consisting of Braess edges between the $\ell$ vertices of degree $2$, but that coclique is not a Braess clique of $K_{\ell,2}$. 
\end{theorem}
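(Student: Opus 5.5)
The plan is to read both claims off Theorem~\ref{thm:bipart}, applied with $n=\ell+2$, so that $G=K_{\ell,n-\ell}=K_{\ell,2}$. The large part $A$ has order $\ell$ and its vertices have common degree $2$. The "coclique consisting of Braess edges" will be the whole part $A$: every pair of vertices of $A$ is a pair of independent twin vertices.

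For the Braess edges, I take $k=2$ in Theorem~\ref{thm:bipart}. The condition becomes
\[2<\frac{7\ell-3(\ell+2)+2}{5}=\frac{4\ell-4}{5},\]
which is equivalent to $4\ell>14$, that is, $\ell\geq 4$. So for $\ell\geq 4$, adding any single edge between two vertices of $A$ increases Kemeny's constant. Every non-edge inside $A$ is therefore a Braess edge, and $A$ is a coclique all of whose missing edges are Braess. This is consistent with the criterion $\ell>\tfrac34(n-\ell)+2=\tfrac72$ quoted from \cite{hu2019complete}.

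For the clique, I take $k=\ell$, which inserts the full clique on $A$. I would not only check that the strict inequality $\ell<\frac{4\ell-4}{5}$ fails (it would require $\ell<-4$). I would also substitute into the explicit difference formula from the proof of Theorem~\ref{thm:bipart}, to get the sign outright. With $k=\ell$ and $n=\ell+2$, the factor $5k-7\ell+3n-2$ equals $\ell+4>0$. Hence
\[\mathcal{K}(\tilde{T})-\mathcal{K}(T)=\frac{-\ell(\ell-1)(\ell+4)}{2\bigl(\ell(\ell-1)+4\ell\bigr)(2\ell+1)}<0,\]
so Kemeny's constant strictly decreases and $A$ is not a Braess clique.

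The only real obstacle is bookkeeping. The roles of $\ell$ and $n-\ell$ must match the theorem (the clique goes in the part of order $\ell$, whose vertices have degree $n-\ell=2$), and the hypotheses $2\leq\ell\leq n-1$ and $2\leq k\leq\ell$ must hold; they do for $n=\ell+2$, $\ell\geq4$ and $k\in\{2,\ell\}$. As an optional remark, the same substitution shows that a Braess $K_k$ exists in $A$ exactly when $k<\frac{4\ell-4}{5}$. This illustrates that Braess cliques of intermediate order exist even though the full coclique fails.
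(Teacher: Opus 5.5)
Your proposal is correct and follows the paper's proof. Like the paper, you apply Theorem~\ref{thm:bipart} with $n=\ell+2$, taking $k=2$ (which gives $\ell\geq 4$) and $k=\ell$ (which would need $\ell<-4$). Your extra check via the factor $5k-7\ell+3n-2=\ell+4>0$ is a nice strengthening, since it gives a strict decrease.

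One small correction: in the combined difference formula in the proof of Theorem~\ref{thm:bipart}, the last denominator factor should be $r+k=n-\ell+k$ (here $\ell+2$), as the line just before it shows, not $n+k-1$. So the exact change is $-\frac{(\ell-1)(\ell+4)}{2(\ell+2)(\ell+3)}$; for example, $\ell=4$ gives $-\frac{2}{7}$, whereas your expression gives $-\frac{4}{21}$. Since that factor is positive either way, the sign and your conclusion are unaffected.
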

\begin{proof}
By Theorem~\ref{thm:bipart},  $K_{\ell,2}$ will have a Braess edge  incident to a pair of vertices of degree $2$ if and only if 
$\ell\geq 4.$
Similarly,  $K_{\ell,2}$ has a Braess $K_\ell$ if and only if 
$\ell \leq -4$. 
\end{proof}

%\vspace{2cm}
\begin{remark}
    Not every Braess clique contains a Braess edge (see Figure~\ref{fig:cliquenoedge}). The graph in Figure~\ref{fig:cliquenoedge}
belongs to a  family of graphs $H$ that have a Braess clique, none of whose edges are Braess edges for $H$, as demonstrated in Theorem~\ref{thm:spider}.  
\end{remark}

\begin{figure}[h] 
\begin{center}\begin{tikzpicture}[scale=0.5, every node/.style={circle, fill,draw, minimum size=4pt, inner sep=0pt}]

% Main path on 4 vertices
\node (v1) at (0,0) {};
\node (v2) at (1.5,0) {};
\node (v3) at (3.0,0) {};
\node (v4) at (4.5,0) {};

\draw (v1)--(v2)--(v3)--(v4);

% Left broom (2 legs attached to v1, each length 2)
\node (l1a) at (-0.8,  0.5) {};
\node (l1b) at (-1.6,  1) {};
\draw (v1)--(l1a)--(l1b);

\node (l2a) at (-0.8, -0.5) {};
\node (l2b) at (-1.6, -1) {};
\draw (v1)--(l2a)--(l2b);

% Right broom (3 legs attached to v4, each length 2)
\node (r1a) at (5.5,  1.2) {};
\node (r1b) at (6.7,  2.4) {};
\draw (v4)--(r1a)--(r1b);

\node (r2a) at (6,  0.0) {};
\node (r2b) at (7.5,  0.0) {};
\draw (v4)--(r2a)--(r2b);

\node (r3a) at (5.5, -1.2) {};
\node (r3b) at (6.7, -2.4) {};
\draw (v4)--(r3a)--(r3b);

% Braess K3 edges (between the 3 inner right broom vertices)
\draw[very thick,orange, dashed] (r1a)--(r2a);
\draw[very thick,orange, dashed] (r2a)--(r3a);
\draw[very thick,orange, dashed] (r1a)--(r3a);

\end{tikzpicture}
\end{center}
\caption{A graph having a Braess $K_3$ composed of edges that are not Braess edges.}\label{fig:cliquenoedge}
\end{figure}
%\end{example}

 A \emph{spider graph} is a tree with one vertex of degree at least 3, called the \emph{center vertex}, and all other vertices of degree one or two. Given $a\geq 1$ and $b\geq 3$, let $S_{a,b}$ denote the spider graph with center vertex of degree $b$ and with each pendent vertex at distance $a$ from the center. It was observed in \cite{jang2025kemeny} 
that $\K (S_{a,b})=a^2\left(b-{\frac{2}{3}}\right)+\frac{1}{6}$.

\begin{theorem}\label{thm:spider}
    Let $G$ be a graph and $v\in V(G)$ with $\alpha=\alpha_{G}(v)$ and $m=|E(G)|$. Let $H$ be the graph obtained by attaching  the center of a spider graph $S_{2,3}$ to $G$ at $v$, as illustrated in Figure~\ref{fig:K3noK2}. 
    Then $H$  
     will have a Braess $K_3$ containing no Braess edges if and only if  
    \begin{equation} \label{eq:K3noK2}
        3m^2+93m+360<6m\alpha<4m^2+94m+330.
    \end{equation}
\end{theorem}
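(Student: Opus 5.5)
The plan is to reduce everything to the 1-separation formula, Corollary~\ref{cor:1sepA}. Write $H=G\oplus_v S$ with $S=S_{2,3}$ and $v$ the center of $S$. Let $x_1,x_2,x_3$ be the neighbours of $v$ in $S$, and let $y_i$ be the pendent vertex adjacent to $x_i$. Following Figure~\ref{fig:cliquenoedge}, the candidate clique is $\{x_1,x_2,x_3\}$.

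These vertices are not twins, so Theorem~\ref{thm:braess_clique_diff_acc} does not apply. However, every edge we insert lies inside the $S$ side of the cut vertex $v$. So if $\widetilde S$ is $S$ with the triangle on $\{x_1,x_2,x_3\}$ inserted, and $S'$ is $S$ with the single edge $x_1x_2$ inserted, then $\widetilde H=G\oplus_v\widetilde S$ and $H'=G\oplus_v S'$. By the symmetry of the spider, every edge of the triangle gives a graph isomorphic to $H'$. Corollary~\ref{cor:1sepA} with $|E(S)|=6$, $|E(S')|=7$ and $|E(\widetilde S)|=9$ gives
\[
\mathcal{K}(\widetilde H)-\mathcal{K}(H)=\mathcal{K}(\widetilde S)-\mathcal{K}(S)+\frac{m\,\alpha_{\widetilde S}(v)+9\alpha}{m+9}-\frac{m\,\alpha_{S}(v)+6\alpha}{m+6},
\]
and the analogous identity for $H'$ has $7$ and $S'$ in place of $9$ and $\widetilde S$. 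The quantity $\mathcal{K}(G)$ cancels in both. Each difference is therefore an explicit rational function of $m$ that is affine in $\alpha$.

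The second step is to compute the six constants $\mathcal{K}(S)$, $\alpha_S(v)$, $\mathcal{K}(S')$, $\alpha_{S'}(v)$, $\mathcal{K}(\widetilde S)$ and $\alpha_{\widetilde S}(v)$.
\begin{itemize}
\item For the spider, $\mathcal{K}(S_{2,3})=4(3-\tfrac23)+\tfrac16=\tfrac{57}{6}$. The accessibility index comes from \eqref{eq:amu} as $\alpha_S(v)=\mu_S(v)-\mathcal{K}(S)$, where the moment uses tree resistances (path distances): $\mu_S(v)=3(2\cdot1+1\cdot2)=12$.
\item $\widetilde S$ is $K_4$ on $\{v,x_1,x_2,x_3\}$ with a pendent edge at each $x_i$.
\item $S'$ is a triangle $vx_1x_2$, a path $v x_3 y_3$, and pendent edges at $x_1,x_2$.
\end{itemize}
Both $\widetilde S$ and $S'$ are built from small blocks by gluing at cut vertices. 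I would obtain their Kemeny constants by repeated application of Theorem~\ref{thm:1sep}, using known values for $K_2$, $K_3$, $K_4$ (e.g. $\mathcal{K}(K_n)=(n-1)^2/n$) and for paths. For the moments $\mu(v)$ I would use resistances: resistance $\tfrac12$ in $K_4$, $\tfrac23$ in $K_3$, additive across cut vertices. Then \eqref{eq:amu} again gives $\alpha(v)$.

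Finally, the triangle should be a Braess clique, i.e. $\mathcal{K}(\widetilde H)-\mathcal{K}(H)>0$. For the edges, we need $\mathcal{K}(H')-\mathcal{K}(H)\le 0$; this also covers the other two edges by symmetry. Multiplying each inequality through by the positive denominators, I expect $(m+6)(m+9)$ and $(m+6)(m+7)$ times constants, and isolating $6m\alpha$ should turn one inequality into $6m\alpha>3m^2+93m+360$ and the other into $6m\alpha<4m^2+94m+330$. The converse direction is immediate since every step is an equivalence.

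The main obstacle is the bookkeeping in the second and third steps. The six constants must be exactly right, or the polynomial coefficients will not match \eqref{eq:K3noK2}. I also need to confirm which bound comes from which condition. Because the coefficient of $\alpha$ in each difference is $\tfrac{9}{m+9}-\tfrac{6}{m+6}>0$ (respectively $\tfrac{7}{m+7}-\tfrac{6}{m+6}>0$), adding more edges increases the difference as $\alpha$ grows. So the clique condition should give the lower bound on $\alpha$ and the no-Braess-edge condition the upper bound. The strict versus non-strict inequality at the upper end depends on the convention for a Braess edge; the paper's introduction uses $\geq$, so I will fix that convention and check it against the strict inequality stated. As a sanity check, I would verify the resulting formula numerically on a small example such as Figure~\ref{fig:cliquenoedge}.
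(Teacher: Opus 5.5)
Your proposal is correct and follows the paper's proof: the paper also applies the 1-separation formula to $G$ glued to $S$, $S'$ and the triangle-augmented spider $\widetilde S$. It uses the $\mu$-form of Theorem~\ref{thm:1sep} rather than your Corollary~\ref{cor:1sepA}, which is an equivalent rewriting via \eqref{eq:amu}, and then cross-multiplies to obtain $\mathcal{K}(H)<\mathcal{K}(\widetilde H)$ iff $6m\alpha>3m^2+93m+360$ and $\mathcal{K}(H')<\mathcal{K}(H)$ iff $6m\alpha<4m^2+94m+330$. As a check on your bookkeeping, $\mathcal{K}(S')=\tfrac{172}{21}$, $\alpha_{S'}(v)=\tfrac{22}{7}$, $\mathcal{K}(\widetilde S)=\tfrac{37}{6}$, $\alpha_{\widetilde S}(v)=\tfrac{13}{3}$ and $\alpha_S(v)=\tfrac52$ reproduce \eqref{eq:K3noK2} exactly, with the strict upper inequality matching the $\geq$ convention for Braess edges.
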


\begin{figure}
\begin{center}
\begin{tikzpicture}[vtx/.style={circle,fill,inner sep=1.6pt},
    bubble/.style={draw,very thick,circle,minimum size=1.5cm},
    ed/.style={line width=1pt},]

\begin{scope} 
  %\node[bubble] (B2) at (0,0) {};
  %\node at (B2.center) {$G$};
  \node[vtx,label=left:$v$] (a2) at (0.75,0) {};
  \node[vtx] (b2) at (2,0) {};
  \node[vtx] (c2) at (2.8,0) {};
  \node[vtx] (g2) at (1.75,0.6) {};
  \node[vtx] (h2) at (1.75,-0.6) {};
  \node[vtx] (d2) at (2.5,1.1) {};
  \node[vtx] (e2) at (2.5,-1.1) {};

  \draw[ed] (a2)--(b2);
  \draw[ed] (b2)--(c2);
  \draw[ed] (a2)--(g2);
  \draw[ed] (a2)--(h2);
  \draw[ed] (d2)--(g2);
  \draw[ed] (e2)--(h2);

  \node at (0.75,-1.2) {$S=S_{2,3}$};
\end{scope}

\begin{scope}[xshift=4cm]
  %\node[bubble] (B) at (0,0) {};
  %\node at (B.center) {$G$};
  \node[vtx,label=left:$v$] (a) at (0.75,0) {};
  \node[vtx] (b) at (2,0) {};
  \node[vtx] (c) at (2.8,0) {};
  \node[vtx] (g) at (1.75,0.6) {};
  \node[vtx] (h) at (1.75,-0.6) {};
  \node[vtx] (d) at (2.5,1.1) {};
  \node[vtx] (e) at (2.5,-1.1) {};

  \draw[ed] (a)--(b);
  \draw[ed] (b)--(c);
  \draw[ed] (a)--(g);
  \draw[ed] (a)--(h);
  \draw[ed] (d)--(g);
  \draw[ed] (e)--(h);

  \draw[very thick,orange, dashed] (b)--(h);

  \node at (0.75,-1.2) {$S'$};
\end{scope}

\begin{scope}[xshift=8cm]
  %\node[bubble] (B1) at (0,0) {};
  %\node at (B1.center) {$G$};
  \node[vtx,label=left:$v$] (a1) at (0.75,0) {};
  \node[vtx] (b1) at (2,0) {};
  \node[vtx] (c1) at (2.8,0) {};
  \node[vtx] (g1) at (1.75,0.6) {};
  \node[vtx] (h1) at (1.75,-0.6) {};
  \node[vtx] (d1) at (2.5,1.1) {};
  \node[vtx] (e1) at (2.5,-1.1) {};

  \draw[ed] (a1)--(b1);
  \draw[ed] (b1)--(c1);
  \draw[ed] (a1)--(g1);
  \draw[ed] (a1)--(h1);
  \draw[ed] (d1)--(g1);
  \draw[ed] (e1)--(h1);

  \draw[very thick,orange, dashed] (b1)--(h1)--(g1)--(b1);

  \node at (0.75,-1.2) {$S''$};
\end{scope}

\end{tikzpicture}
\end{center}
\caption{Graphs used in the proof of Theorem~\ref{thm:spider}.}\label{fig:spiderw}
\end{figure}

\begin{figure}
\begin{center}
\begin{tikzpicture}[vtx/.style={circle,fill,inner sep=1.6pt},
    bubble/.style={draw,very thick,circle,minimum size=1.5cm},
    ed/.style={line width=1pt},]

\begin{scope} 
  \node[bubble] (B2) at (0,0) {};
  \node at (B2.center) {$G$};
  \node[vtx,label=left:$v$] (a2) at (0.75,0) {};
  \node[vtx] (b2) at (2,0) {};
  \node[vtx] (c2) at (2.8,0) {};
  \node[vtx] (g2) at (1.75,0.6) {};
  \node[vtx] (h2) at (1.75,-0.6) {};
  \node[vtx] (d2) at (2.5,1.1) {};
  \node[vtx] (e2) at (2.5,-1.1) {};

  \draw[ed] (a2)--(b2);
  \draw[ed] (b2)--(c2);
  \draw[ed] (a2)--(g2);
  \draw[ed] (a2)--(h2);
  \draw[ed] (d2)--(g2);
  \draw[ed] (e2)--(h2);

  \node at (0.75,-1.3) {$H$};
\end{scope}

\begin{scope}[xshift=4cm]
  \node[bubble] (B) at (0,0) {};
  \node at (B.center) {$G$};
  \node[vtx,label=left:$v$] (a) at (0.75,0) {};
  \node[vtx] (b) at (2,0) {};
  \node[vtx] (c) at (2.8,0) {};
  \node[vtx] (g) at (1.75,0.6) {};
  \node[vtx] (h) at (1.75,-0.6) {};
  \node[vtx] (d) at (2.5,1.1) {};
  \node[vtx] (e) at (2.5,-1.1) {};

  \draw[ed] (a)--(b);
  \draw[ed] (b)--(c);
  \draw[ed] (a)--(g);
  \draw[ed] (a)--(h);
  \draw[ed] (d)--(g);
  \draw[ed] (e)--(h);

  \draw[very thick,orange, dashed] (b)--(h);

  \node at (0.75,-1.3) {$H'$};
\end{scope}

\begin{scope}[xshift=8cm]
  \node[bubble] (B1) at (0,0) {};
  \node at (B1.center) {$G$};
  \node[vtx,label=left:$v$] (a1) at (0.75,0) {};
  \node[vtx] (b1) at (2,0) {};
  \node[vtx] (c1) at (2.8,0) {};
  \node[vtx] (g1) at (1.75,0.6) {};
  \node[vtx] (h1) at (1.75,-0.6) {};
  \node[vtx] (d1) at (2.5,1.1) {};
  \node[vtx] (e1) at (2.5,-1.1) {};

  \draw[ed] (a1)--(b1);
  \draw[ed] (b1)--(c1);
  \draw[ed] (a1)--(g1);
  \draw[ed] (a1)--(h1);
  \draw[ed] (d1)--(g1);
  \draw[ed] (e1)--(h1);

  \draw[very thick,orange, dashed] (b1)--(h1)--(g1)--(b1);

  \node at (0.75,-1.3) {$H''$};
\end{scope}

\end{tikzpicture}
\end{center}
\caption{A graph $H$ having a Braess $K_3$, with none of the edges in the $K_3$ a Braess edge, when Equation~(\ref{eq:K3noK2}) is satisfied.}\label{fig:K3noK2}
\end{figure}

\begin{proof}
Let $G$ be any graph with $v\in V(G)$. 
Let $S, S'$ and $S''$ be the graphs in Figure~\ref{fig:spiderw}. Let
$H=G\oplus_v S, H'=G\oplus_v S'$ and
$H''=G\oplus_v S''$, as shown in Figure~\ref{fig:K3noK2}.
We will show that 
$\K(H')<\K(H)<\K(H'')$. 

Using the 1-separation formula in  Theorem~\ref{thm:1sep},   
$$\K(H) = \frac{m_S(\K(S)+\mu_G(v))+m_G(\K(G)+\mu_S(v))}{m_S+m_G}$$
with $m_S = 6,\ \K(S) = 9.5,$ and $ \mu_S(v) 
= 12,$ gives 
\begin{eqnarray*}
\K(H) 
&=& \frac{57 +6\mu_G(v)+m\K(G)+12m}{m+6}.
\end{eqnarray*}
Further 
$m_{S'} = 7,\ \K(S') = \frac{172}{21}$ and $\mu_{S'}(v) = 
\frac{34}{3},$
 hence 
$$\K(H') = \frac{\frac{172}{3}+7\mu_G(v)+m\K(G)+\frac{34}{3}m}{m+7}.$$
Finally, 
$m_{S''} = 9,\ \K(S'') = \frac{37}{6},$ and $\mu_{S''}(v)  
= \frac{21}{2},$
thus
$$\K(H'') = \frac{\frac{111}{2}+9\mu_G(v)+m\K(G)+\frac{21}{2}m}{m+9}.$$
Thus, using Equation~(\ref{eq:amu}), 
$\K(H)<\K(H'')$ 
if and only if
\begin{equation} \label{eq:upper}
    2m\alpha-m^2-31m-120>0.
\end{equation}
Further, 
$\K(H')<\K(H)$ 
if and only if 
\begin{equation}\label{eq:lower}
3m\alpha-2m^2-47m-165<0.
\end{equation}
Combining Equations~(\ref{eq:upper}) and (\ref{eq:lower}),
$H$ has a Braess $K_3$ as indicated with no edge of the coclique being a Braess edge if and only if
$$3m^2+93m+360<6m\alpha<4m^2+94m+330.$$
\end{proof}

Attaching a spider graph to a complete bipartite graph gives an infinite family of graphs that satisfy the inequalities in line (\ref{eq:K3noK2}):

\begin{corollary}\label{cor:infinite}
Suppose $a\geq 36$. Let $H$ be the graph obtained from 
$K_{a,3}$ by attaching the center of a spider graph $S_{2,3}$ to a vertex of degree 3 in $K_{a,3}$. Then $H$ will have a Braess $K_3$ containing no Braess edges.  
\end{corollary}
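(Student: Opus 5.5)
The plan is to apply Theorem~\ref{thm:spider} with $G=K_{a,3}$, where $v$ is a vertex of degree $3$. Such a vertex lies in the partite set of order $a$ (the set $A$, whose vertices are adjacent to the $3$ vertices of the other part). Since Theorem~\ref{thm:spider} is an ``if and only if'' statement, it suffices to check that the pair $(m,\alpha)$ for this choice satisfies the double inequality \eqref{eq:K3noK2} exactly when $a\geq 36$, or at least whenever $a\geq 36$.

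First I compute the two parameters. The size is $m=|E(K_{a,3})|=3a$. By \eqref{eq:alphabip}, a vertex $v$ in the part of order $a$ has $\alpha=\alpha_{K_{a,3}}(v)=\frac{4a-3}{2}$. Hence
\[
6m\alpha = 18a\cdot\frac{4a-3}{2} = 36a^2-27a .
\]

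Next I substitute into the two sides of \eqref{eq:K3noK2}. For the upper inequality, the right-hand side is $4m^2+94m+330 = 36a^2+282a+330$. This exceeds $36a^2-27a$ for every $a\geq 1$, so the upper bound holds unconditionally. For the lower inequality, the left-hand side is $3m^2+93m+360 = 27a^2+279a+360$. The condition $27a^2+279a+360<36a^2-27a$ is equivalent to $9a^2-306a-360>0$, that is, $a^2-34a-40>0$. The positive root of this quadratic is $17+\sqrt{329}\approx 35.14$, so the lower inequality holds precisely for integers $a\geq 36$.

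The only step needing care is correctly identifying which part $v$ lies in, and hence which case of \eqref{eq:alphabip} applies: a vertex of degree $3$ lies in the part of order $a$, giving $\alpha=\frac{4a-3}{2}$ rather than $\frac{9}{2}$. Once that is settled, Theorem~\ref{thm:spider} gives the conclusion directly. The resulting threshold $a\geq 36$ also matches the hypothesis, which confirms the identification.
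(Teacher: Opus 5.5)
Your proposal is correct and follows the paper's proof: both apply Theorem~\ref{thm:spider} with $G=K_{a,3}$, $m=3a$, and $\alpha=\frac{4a-3}{2}$ from \eqref{eq:alphabip}, since a degree-$3$ vertex lies in the part of order $a$. The paper leaves the algebra to the reader, and you supply it: the lower inequality reduces to $a^2-34a-40>0$, with threshold $17+\sqrt{329}\approx 35.14$, and the upper inequality holds for every $a\geq 1$.
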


\begin{proof}
    Let $G=K_{a,b}$ with bipartition $V(G)=A\cup B$ such that $|A|=a$ and $|B|=b$. As per (\ref{eq:Kbip}), $\K(G)=a+b-\frac{3}{2}$ %(see \cite{KD}) 
    and by (\ref{eq:alphabip}),
$\alpha_G(v)=\frac{4a-3}{2}$ if $v\in A$ and
$\alpha_G(v)=\frac{4b-3}{2}$ if $v\in B$. %(see \cite[Theorem 4.1]{G2016}.)
The corollary now follows from Theorem~\ref{thm:spider}.
\end{proof}

Our next result, Theorem~\ref{thm:BraessEdgesAndNot} will demonstrate the following observation:

\begin{remark}
    There exist examples of Braess cliques of a graph $G$ for which some of the edges of the clique are Braess edges for $G$, but not all of the edges of the clique. See Figure~\ref{fig:G4etal} for an example.  
\end{remark}

\begin{figure}
\begin{center}
\begin{tikzpicture}[vtx/.style={circle,fill,inner sep=1.6pt},
    sbubble/.style={draw,very thick,rectangle,minimum size=1.5cm},
    bubble/.style={draw,very thick,circle,minimum size=2.15cm},
    ed/.style={line width=1pt},]

\begin{scope}
\node[bubble] (B05) at (0,0) {};
  %\node[sbubble] (B0) at (0,0) {};
  \node at (B0.center) {$K_k$};
  \node[vtx] (v) at (0.93,-0.55) {}; % on the right bottom side of the bubble
  \node at (0.95, -1) {$v$};
  %\node[vtx] (w) at (0.93,0.55) {};
  %\node at (0.95, 1) {$w$};
  \node[vtx] (a) at (1.95, 0.55) {};
  \node[vtx] (b) at (1.95, -0.55) {};
  \node at (1.95, 1) {$w$}; 
  \node at (1.95,-1) {$x$};
     \draw[ed] (v)--(a);
    \draw[ed] (v)--(b);
  %  \draw[very thick,orange] (a)--(w);
  %   \draw[very thick,orange] (a)--(b);
  %  \draw[very thick,orange] (b)--(w);
\node at (0, -1.7) {$G$};
\end{scope}

 \begin{scope}[xshift=3.5cm]
  \node[bubble] (B05) at (0,0) {};
  %\node[sbubble] (B0) at (0,0) {};
  \node at (B05.center) {$K_k$};
  \node[vtx] (v) at (0.93,-0.55) {}; % on the right bottom side of the bubble
  \node at (0.95, -1) {$v$};
  %\node[vtx] (w) at (0.93,0.55) {};
 % \node at (0.95, 1) {$w$};
  \node[vtx] (a) at (1.95, 0.55) {};
  \node[vtx] (b) at (1.95, -0.55) {};
  \node at (1.95, 1) {$w$}; 
  \node at (1.95,-1) {$x$};
     \draw[ed] (v)--(a);
    \draw[ed] (v)--(b);
    %\draw[very thick,orange] (a)--(w);
     \draw[very thick,orange, dashed] (a)--(b);
   % \draw[very thick,orange] (b)--(w);
\node at (0, -1.7) {$G_{wx}$}; %{$G'$};
\end{scope}

\begin{scope}[xshift=7cm]
  \node[bubble] (B05) at (0,0) {};
  %\node[sbubble] (B0) at (0,0) {};
  \node at (B05.center) {$K_k$};
  \node[vtx] (v) at (0.93,-0.55) {}; % on the right bottom side of the bubble
  \node at (0.95, -1) {$v$};
  \node[vtx] (u) at (0.93,0.55) {};
  \node at (0.95, 1) {$u$};
  \node[vtx] (a) at (1.95, 0.55) {};
  \node[vtx] (b) at (1.95, -0.55) {};
  \node at (1.95, 1) {$w$}; 
  \node at (1.95,-1) {$x$};
   \draw[ed] (v)--(a);
   \draw[ed] (v)--(b);
    \draw[very thick,orange, dashed] (a)--(u);
    % \draw[very thick,orange] (a)--(b);
   % \draw[very thick,orange] (b)--(w);
\node at (0, -1.7) {$G_{uw}$}; % {$G_{3x}$};
\end{scope}

\begin{scope}[xshift=10.5cm]
  \node[bubble] (B05) at (0,0) {};
  %\node[sbubble] (B0) at (0,0) {};
  \node at (B05.center) {$K_k$};
  \node[vtx] (v) at (0.93,-0.55) {}; % on the right bottom side of the bubble
  \node at (0.95, -1) {$v$};
  \node[vtx] (u) at (0.93,0.55) {};
  \node at (0.95, 1) {$u$};
  \node[vtx] (a) at (1.95, 0.55) {};
  \node[vtx] (b) at (1.95, -0.55) {};
  \node at (1.95, 1) {$w$}; 
  \node at (1.95,-1) {$x$};
     \draw[ed] (v)--(a);
    \draw[ed] (v)--(b);
    \draw[very thick,orange, dashed] (a)--(u);
     \draw[very thick,orange, dashed] (a)--(b);
    \draw[very thick,orange, dashed] (b)--(u);
\node at (0, -1.7) {$G_{uwx}$}; %{$G_4$};
\end{scope}
\end{tikzpicture}
\end{center}
\caption{A graph $G$ with a Braess edge $\{w,x\}$, a non-Braess edge 
$\{w,u\}$ or $\{x,u\}$, %(as in $G_{uw}$) 
and a Braess $K_3$. %(as in $G_{uwx}$).
}\label{fig:G4etal}
\end{figure}

For any vertex $v$ of a complete graphs $K_n$ on $n$ vertices as well as a center vertex $v$ of a star graph $S_n=K_{1,n-1}$, the following values can be found in~\cite{faught20221}:
\begin{eqnarray} \label{rem:Kmoments}
    \K(K_n) = \frac{(n-1)^2}{n},&\rm{\ and\ }&  \mu_{K_n}(v) = \frac{2(n-1)^2}{n},\\ 
    \label{rem:Smoments}
    \K(S_n) = \frac{2n-3}{2},&\rm{\ and\ }&     \mu_{S_n}(v) = n-1. 
\end{eqnarray}
\begin{lemma} \label{lem:kappaG}
Let $G$ be the graph with $k+2$ vertices obtained from $K_k$ by inserting two pendent vertices adjacent to
some vertex $v$ of $K_k$, (that is, $G=K_k \oplus_v S_3$   where $v$ is the center vertex of $S_3$ and any vertex of $K_k$). Then 
$$\K{(G)}=\frac{k^4-k^3+9k^2-11k+8}{k^3-k^2+4k}.$$
\end{lemma}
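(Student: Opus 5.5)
The plan is to apply the 1-separation formula of Theorem~\ref{thm:1sep} directly. We use the decomposition $G = G_1 \oplus_v G_2$ with $G_1 = K_k$ and $G_2 = S_3$, where the cut vertex $v$ is an arbitrary vertex of $K_k$ and the centre of $S_3$. All four ingredients the formula needs are already recorded in \eqref{rem:Kmoments} and \eqref{rem:Smoments}, so no new computation of mean first passage times or resistances is required.

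First, I would record the data:
\begin{itemize}
\item $m_1 = \binom{k}{2} = \frac{k(k-1)}{2}$, $\K(K_k) = \frac{(k-1)^2}{k}$, and $\mu_{K_k}(v) = \frac{2(k-1)^2}{k}$;
\item $m_2 = 2$, and with $n=3$ in \eqref{rem:Smoments}, $\K(S_3) = \frac{3}{2}$ and $\mu_{S_3}(v) = 2$.
\end{itemize}
Substituting into Theorem~\ref{thm:1sep} gives
\[
\K(G) = \frac{\frac{k(k-1)}{2}\left(\frac{(k-1)^2}{k} + 2\right) + 2\left(\frac{3}{2} + \frac{2(k-1)^2}{k}\right)}{\frac{k(k-1)}{2} + 2}.
\]

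Next, I would clear denominators by multiplying the numerator and denominator by $2k$. The denominator becomes $k\bigl(k(k-1)+4\bigr) = k^3 - k^2 + 4k$. The numerator becomes
\[
k(k-1)\bigl((k-1)^2 + 2k\bigr) + 2\bigl(3k + 4(k-1)^2\bigr) = k(k-1)(k^2+1) + 8k^2 - 10k + 8,
\]
which expands to $k^4 - k^3 + 9k^2 - 11k + 8$. This is exactly the claimed expression.

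There is no real obstacle here; the only point needing care is bookkeeping. In Theorem~\ref{thm:1sep}, each graph's Kemeny constant is paired with the other graph's moment and weighted by its own edge count. It is easy to swap $\mu_{K_k}(v)$ and $\mu_{S_3}(v)$ by mistake. As a sanity check I would test $k=2$: then $G$ is the star $S_4$, and the formula gives $\frac{16-8+36-22+8}{8-4+8} = \frac{30}{12} = \frac{5}{2}$, which agrees with $\K(S_4) = \frac{5}{2}$ from \eqref{rem:Smoments} and Figure~\ref{fig:small_example}.
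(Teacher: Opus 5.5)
Your proposal is correct and matches the paper's proof: apply the 1-separation formula (Theorem~\ref{thm:1sep}) to $K_k \oplus_v S_3$, using the recorded values of $\K$ and the moments for $K_k$ and $S_3$. Your algebra, including the simplification $(k-1)^2+2k=k^2+1$, checks out, and so does the $k=2$ check against $\K(S_4)=\tfrac{5}{2}$.
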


\begin{proof}
The formula follows directly from 1-separation formula of Theorem~\ref{thm:1sep} as well as the known
Kemeny values and moments for the complete graph $K_k$ and star $S_3$ noted in (\ref{rem:Kmoments}) and (\ref{rem:Smoments}).
\end{proof}

\begin{lemma}\label{lem:kappaG4}
    Let $G_{uwx}$ be the graph on $k+2$ vertices obtained from $K_k$ by inserting two vertices that are both adjacent to the same two vertices in $K_k$, and these two vertices are adjacent to each other. Then
    $$\K(G_{uwx}) = \frac{24k^7 + 30k^6 + 318k^5 + 234k^4 - 890k^3 + 1151k^2 + 398k - 840}{4(6k^6 + 6k^5 + 42k^4 + 114k^3 - 23k^2 - 70k)}.$$
\end{lemma}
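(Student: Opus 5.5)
I would compute $\mathcal{K}(G_{uwx})$ from the spectrum, using the equitable-partition technique from Lemma~\ref{lem:equitable_partitions} and the eigenvalue formula \eqref{eq:Keig}, rather than the 1-separation formula. As in Figure~\ref{fig:G4etal}, $w$ and $x$ are adjacent to each other and to both $u$ and $v$. So $G_{uwx}$ has $k+2$ vertices and $m=\binom{k}{2}+5$ edges, and three families of adjacent twins:
\begin{itemize}
\item the class $C$ of the $k-2$ vertices of $K_k$ other than $u,v$, each of degree $k-1$;
\item the pair $P=\{u,v\}$, each of degree $k+1$;
\item the pair $R=\{w,x\}$, each of degree $3$.
\end{itemize}

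Following the proof of Lemma~\ref{lem:kemeny_quotient}, vectors $e_i-e_j$ with $i,j$ in the same twin class are eigenvectors of $T=D^{-1}A$. The eigenvalue is $-1/\deg$ of the class, because the twins are adjacent. This gives $k-3$ eigenvalues equal to $-\tfrac{1}{k-1}$, one equal to $-\tfrac{1}{k+1}$, and one equal to $-\tfrac13$. Their contribution to \eqref{eq:Keig} is
\[
\frac{(k-3)(k-1)}{k}+\frac{k+1}{k+2}+\frac34 .
\]
By Lemma~\ref{lem:equitable_partitions}, the remaining three eigenvalues, which include $1$, are those of the quotient matrix
\[
Q=\begin{bmatrix} \frac{k-3}{k-1} & \frac{2}{k-1} & 0\\[2pt] \frac{k-2}{k+1} & \frac{1}{k+1} & \frac{2}{k+1}\\[2pt] 0 & \frac23 & \frac13\end{bmatrix},
\]
with rows and columns ordered $C,P,R$. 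These eigenvectors are constant on classes, so they are orthogonal to the difference vectors above and the count is complete.

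Next, write $\det(xI-Q)=(x-1)(x^2-sx+p)$, where $s=\operatorname{tr}Q-1$ and $p=\det Q$. The two nontrivial quotient eigenvalues $\lambda_2,\lambda_3$ then contribute
\[
\frac{1}{1-\lambda_2}+\frac{1}{1-\lambda_3}=\frac{2-s}{1-s+p}.
\]
Both $s$ and $p$ are explicit rational functions of $k$. Adding this to the contribution above gives a closed form for $\mathcal{K}(G_{uwx})$. The final step is to put everything over a common denominator and check, by cross-multiplication, that it equals the stated expression.

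The main obstacle I expect is that last algebraic reconciliation. My route naturally produces a denominator built from $k$, $k+2$, $4$ and $1-s+p$. The stated denominator $4(6k^6+\cdots-70k)$ may contain extra common factors, or it may be written unreduced, so I would verify the identity polynomially rather than try to match forms term by term. If the identity still fails, the likely cause is a misreading of which vertices of $K_k$ are joined to $w$ and $x$. In that case I would switch to the method of Theorem~\ref{thm:braess_clique_diff_acc}. Start from the graph with $w,x$ independent twins of common degree $r=2$, and compute its Kemeny constant and $\alpha(w)$ from the same quotient construction (Lemma~\ref{lem:kemeny_quotient}, Proposition~\ref{prop:acc_quotient}). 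Then add the $K_2$ on $\{w,x\}$ using the theorem with $k=2$, $r=2$.
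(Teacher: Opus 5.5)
Your method is the paper's method. You use the same three twin classes, the same difference eigenvectors (eigenvalue $-\frac{1}{k-1}$ with multiplicity $k-3$, plus $-\frac{1}{k+1}$ and $-\frac13$), the same quotient $Q$, and \eqref{eq:Keig}. All of that is correct, and your trace/determinant shortcut is cleaner than writing out the roots.

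\textbf{The gap is the last step.} Your final cross-multiplication cannot succeed, because the displayed formula is not $\mathcal{K}(G_{uwx})$. Carrying out your own plan gives
\[
s=\frac{k^2-3k-10}{3(k^2-1)},\qquad p=\det Q=\frac{13-5k}{3(k^2-1)},\qquad \frac{2-s}{1-s+p}=\frac{5k^2+3k+4}{2(k^2-k+10)},
\]
and therefore
\[
\mathcal{K}(G_{uwx})=\frac{(k-1)(k-3)}{k}+\frac{k+1}{k+2}+\frac34+\frac{5k^2+3k+4}{2(k^2-k+10)}=\frac{4k^5+5k^4+57k^3+44k^2-108k+240}{4k(k+2)(k^2-k+10)}.
\]
Two checks show the stated formula is wrong:
\begin{itemize}
\item At $k=3$ the value above is $\frac{269}{80}=3.3625$. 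The spectrum $1,-\frac14,-\frac13,\frac{-5\pm\sqrt{73}}{24}$ is confirmed by $\operatorname{tr}T=0$ and $\operatorname{tr}T^2=\frac{109}{72}$. The stated expression instead gives $\frac{52423}{15860}\approx 3.3054$.
\item At $k=2$ we have $G_{uwx}=K_4$. The stated expression gives $\frac{511}{241}$ rather than $\mathcal{K}(K_4)=\frac94$.
\end{itemize}

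The discrepancy comes from the paper's own proof. The discriminant of the nontrivial quotient eigenvalues is
\[
(k^2-3k-10)^2-12(13-5k)(k^2-1)=k^4+54k^3-167k^2+256,
\]
not $k^4+54k^3-167k^2+156$ as printed. The stated closed form is exactly what the wrong constant $156$ produces. Its denominator is $4k(k+2)(6k^4-6k^3+54k^2+6k-35)$, and
\[
6k^4-6k^3+54k^2+6k-35=6(k^2-1)(k^2-k+10)+25.
\]
So your fallback plan is misdirected. Your reading of the graph agrees with the paper's transition matrix, and the route through Theorem~\ref{thm:braess_clique_diff_acc} would return the same correct value. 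The right conclusion is that the lemma must be restated with the corrected formula, which your argument then proves.

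The two expressions differ only in the quotient term: $\frac{N}{E}$ versus $\frac{N}{E+25}$, where $N=3(k^2-1)(5k^2+3k+4)>0$ and $E=6(k^2-1)(k^2-k+10)>0$. Hence the stated value strictly underestimates the true one for every $k\ge 3$. The positivity argument for the Braess $K_3$ in Theorem~\ref{thm:BraessEdgesAndNot} therefore survives, although the difference displayed there must be recomputed.
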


\begin{proof}
We observe that $G_{uwx}$ has the transition matrix
$$T=
\renewcommand{\arraystretch}{2} % Default value: 1
\left[ \begin{array}{c|c|c} 
\frac{1}{k-1}(J-I)_{k-2} &\frac{1}{k-1}J_{k-2,2}& O_{k-2,2} \\ \hline
\frac{1}{k+1}J_{2,k-2}& \frac{1}{k+1}(J-I)_2& \frac{1}{k+1}J_2\\ \hline
O_{2,k-2}&\frac{1}{3}J_2&\frac{1}{3}(J-I)_2 
\end{array}\right]$$
\renewcommand{\arraystretch}{1} % Default value: 1
Then $T$ has $k-3$ eigenvectors  $e_i-e_{k-2}$, $q\leq i \leq k-3$ each with eigenvalue $\frac{-1}{k-1}$, as well as one eigenvector
$e_{k+1}-e_{k+2}$ with eigenvalue $\frac{-1}{3}$, and one
eigenvector $e_{k-1}-e_{k}$ with eigenvalue $\frac{-1}{k+1}$. In addition, $T$ has three eigenvalues from the quotient matrix 
$$Q=\renewcommand{\arraystretch}{1.5} % Default value: 1
\left[ \begin{array}{ccc}
\frac{k-3}{k-1}&\frac{2}{k-1}&0\\
\frac{k-2}{k+1}& \frac{1}{k+1}& \frac{2}{k+1}\\
0&\frac{2}{3}&\frac{1}{3}
\end{array}\right].\renewcommand{\arraystretch}{1} % Default value: 1
$$
Now $Q$ has eigenvalue 1 as well as the two eigenvalues
$$\frac{k^2-3k-10\pm \sqrt{k^4+54k^3-167k^2+156}}{6(k^2-1)}. $$
We obtain $\K(G_{uwx})$ by applying the eigenvalue formula (\ref{eq:Keig}). 
\end{proof}

Next we intend to develop Kemeny's constant for $G_{uw}$. As an intermediate step, we determine Kemeny's constant and a moment of $G_3$ in Figure~\ref{fig:G3}.

\begin{figure}[h]
\begin{center}
  \begin{tikzpicture}[vtx/.style={circle,fill,inner sep=1.6pt},
    sbubble/.style={draw,very thick,rectangle,minimum size=1.5cm},
    bubble/.style={draw,very thick,circle,minimum size=2.15cm},
    ed/.style={line width=1pt},]

\begin{scope}%[xshift=7cm]
  \node[bubble] (B05) at (0,0) {};
  %\node[sbubble] (B0) at (0,0) {};
  \node at (B05.center) {$K_k$};
  \node[vtx] (v) at (0.93,-0.55) {}; % on the right bottom side of the bubble
  \node at (0.95, -1) {$v$};
  \node at (1.95, 1) {$w$};
  \node[vtx] (u) at (0.93,0.55) {};
  \node at (0.95, 1) {$u$};
  \node[vtx] (a) at (1.95, 0.55) {};
  %\node[vtx] (b) at (1.95, -0.55) {};
   \draw[ed] (v)--(a);
   % \draw[ed] (v)--(b);
    \draw[ed] (a)--(u);
    % \draw[very thick,orange] (a)--(b);
   % \draw[very thick,orange] (b)--(w);
\node at (2, -0.7) {$G_3$};
\end{scope}
\end{tikzpicture}
\end{center}
\caption{An intermediate graph $G_3$.}\label{fig:G3}
\end{figure}

\begin{lemma}\label{lem:kappaG3}
If $G_3$ is the graph on $k+1$ vertices obtained the graph $K_k$ by inserting a vertex adjacent to exactly two vertices of $K_k,$ then
$$\kappa(G_3)=\frac{k^5-k^4+6k^3-4k^2-6k+12}{k^4+3k^2+4k}.$$
\end{lemma}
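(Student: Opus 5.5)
We will follow the same strategy as in the proof of Lemma~\ref{lem:kappaG4}: split the spectrum of the transition matrix of $G_3$ into eigenvalues with explicit "difference" eigenvectors and eigenvalues of a small quotient matrix. Then we apply the eigenvalue formula \eqref{eq:Keig}. (We read $\kappa(G_3)$ in the statement as $\K(G_3)$.) Order the vertices of $G_3$ as follows: the $k-2$ vertices of $K_k$ other than $u,v$ (each of degree $k-1$), then $u,v$ (each of degree $k$), then $w$ (degree $2$). The transition matrix then has the block form
$$T=\left[\begin{array}{c|c|c}
\frac{1}{k-1}(J-I)_{k-2} & \frac{1}{k-1}J_{k-2,2} & 0_{k-2}\\ \hline
\frac{1}{k}J_{2,k-2} & \frac{1}{k}(J-I)_2 & \frac{1}{k}\mathbbm{1}_2\\ \hline
0_{k-2}^\top & \frac12\mathbbm{1}_2^\top & 0
\end{array}\right].$$
Each block has constant row sums, so Lemma~\ref{lem:equitable_partitions} applies to the partition $\{\{\text{$k-2$ vertices}\},\{u,v\},\{w\}\}$.

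First, we record the eigenvalues coming from differences of twins. The $k-3$ vectors $e_i-e_{k-2}$, supported on the first block, are eigenvectors of $T$ with eigenvalue $\frac{-1}{k-1}$. The vector $e_u-e_v$ is an eigenvector with eigenvalue $\frac{-1}{k}$; its $w$-coordinate in $T(e_u-e_v)$ vanishes because $w$ sees $u$ and $v$ equally. As in the proof of Lemma~\ref{lem:kemeny_quotient}, these $k-2$ vectors are independent of the block-constant eigenvectors lifted from the quotient. Together they contribute
$$(k-3)\frac{k-1}{k}+\frac{k}{k+1}$$
to $\K(G_3)$.

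The remaining three eigenvalues are those of the quotient matrix
$$Q=\left[\begin{array}{ccc}\frac{k-3}{k-1}&\frac{2}{k-1}&0\\ \frac{k-2}{k}&\frac1k&\frac1k\\ 0&1&0\end{array}\right],$$
one of which is $1$. To avoid the square roots that appear in Lemma~\ref{lem:kappaG4}, we will not compute $\lambda_2,\lambda_3$ explicitly. Write $p(x)=\det(xI-Q)=(x-1)(x-\lambda_2)(x-\lambda_3)$. Then
$$\frac{1}{1-\lambda_2}+\frac{1}{1-\lambda_3}=\frac{2-(\lambda_2+\lambda_3)}{(1-\lambda_2)(1-\lambda_3)},$$
where the numerator uses $\lambda_2+\lambda_3=\operatorname{tr}Q-1$ and the denominator equals $p'(1)$. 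Both are quick computations from the $3\times3$ matrix. Equivalently, the denominator is the sum of the principal $2\times2$ minors of $I-Q$.

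Finally, we add the two contributions and simplify. The expected common denominator is $k(k+1)(k^2-k+4)=k^4+3k^2+4k$, which matches the statement; this is a useful consistency check on $p'(1)$. The main obstacle is this last algebraic simplification into the stated rational function. If it does not match, the first things to re-check are the eigenvalue $\frac{-1}{k}$ and the entries of $Q$. As a sanity check we can evaluate at $k=3$, where $G_3$ is $K_4$ minus an edge, and compare with a direct spectral computation. The degenerate case $k=2$, where the first block is empty, would be checked separately.
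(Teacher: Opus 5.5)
Your proposal is correct and is essentially the paper's proof. You use the same $k-3$ eigenvectors $e_i-e_{k-2}$ (eigenvalue $-\frac{1}{k-1}$), the same eigenvector $e_u-e_v$ (eigenvalue $-\frac1k$), the same $3\times 3$ quotient $Q$, and then \eqref{eq:Keig}; the only difference is that you evaluate $\frac{1}{1-\lambda_2}+\frac{1}{1-\lambda_3}$ via $\mathrm{tr}\,Q$ and $p'(1)$ rather than via the explicit surd eigenvalues. This works cleanly: $\lambda_2+\lambda_3=-\frac{k+1}{k(k-1)}$ and $p'(1)=\frac{k^2-k+4}{k(k-1)}$ give a quotient contribution of $\frac{2k^2-k+1}{k^2-k+4}$, and adding $\frac{(k-3)(k-1)}{k}+\frac{k}{k+1}$ yields exactly the stated rational function.
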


\begin{proof}
The transition matrix for $G_3$ is  
$$T=\renewcommand{\arraystretch}{2} % Default value: 1
\left[\begin{array}{c|c|c}
\frac{1}{k-1}(J-I)_{k-2}&\frac{1}{k-1}J_{k-2,2}&O_{k-2,1}\\ \hline
\frac{1}{k}J_{2,k-2}& \frac{1}{k}(J-I)_2 & \frac{1}{k}J_{2,1} \\ \hline
O_{1,k-2}&\frac{1}{2}J_{1,2}&0
\end{array}\right].$$
The matrix $T$  has one eigenvector
$e_{k-1}-e_{k}$ with eigenvalue $-\frac{1}{k}$ and $k-3$ eigenvectors 
$e_i-e_{k-2}$ for $1\leq i\leq k-3$ with eigenvalue $-\frac{1}{k-1}$ as well
as the eigenvalues of the quotient matrix
$$
Q=\renewcommand{\arraystretch}{1.5} % Default value: 1
\left[ \begin{array}{ccc}
\frac{k-3}{k-1}&\frac{2}{k-1}&0 \\
\frac{k-2}{k} & \frac{1}{k} & \frac{1}{k} \\
0&1&0
\end{array}\right].
\renewcommand{\arraystretch}{1} % Default value: 1
$$
The eigenvalues of $Q$ are 1 as well as the two eigenvalues
$$ -\frac{k+1 \pm \sqrt{4k^3 - 15k^2 + 14k + 1}}{2(k^2 - k)}. $$
Using equation (\ref{eq:Keig}) the formula for $\K(G_3)$ is obtained.
\end{proof}

To calculate moments of $G_3$,  the following theorem of Cayley (see e.g, \cite{takac}) is useful.

\begin{theorem}\label{thm:Cayley}{\rm{[Cayley]}} The number of spanning forests of $K_n$ with $t$ distinct trees such that $t$ specific vertices are in 
different trees is $tn^{n-t-1}.$
\end{theorem}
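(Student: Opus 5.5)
We will prove Cayley's formula in the form stated in Theorem~\ref{thm:Cayley}: the number of spanning forests of $K_n$ with exactly $t$ trees, in which $t$ prescribed vertices $v_1,\dots,v_t$ lie in distinct trees (so each tree contains exactly one $v_i$), equals $tn^{n-t-1}$. The plan is a reduction to the ordinary Cayley formula for trees, via the matrix-tree theorem.

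\textbf{Step 1: reduce to counting spanning trees.} Add a new vertex $s$ to $K_n$ and join it to each of $v_1,\dots,v_t$, giving a graph $G^+$ on $n+1$ vertices. Spanning trees of $G^+$ in which $s$ has degree $t$ correspond bijectively to the forests we want: delete $s$ from such a tree to obtain a forest of $t$ trees, each containing exactly one $v_i$. Conversely, given such a forest, join $s$ to every $v_i$.

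\textbf{Step 2: identify $s$ with the roots.} In any spanning tree of $G^+$, $s$ can only be adjacent to the $v_i$. So forests of the desired type are the spanning trees of $G^+$ using all $t$ edges at $s$. Equivalently, contract $v_1,\dots,v_t$ (and $s$) into a single vertex $z$. The forests then correspond to spanning trees of the resulting multigraph $H$ on $n-t+1$ vertices. In $H$, the $n-t$ remaining vertices form a clique, and each of them has $t$ parallel edges to $z$. (Edges among the $v_i$ become loops and are discarded.)

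\textbf{Step 3: count spanning trees of $H$ by the matrix-tree theorem.} Delete the row and column of $z$ from the Laplacian of $H$. The reduced Laplacian is the $(n-t)\times(n-t)$ matrix
\[
L_0 = (n-t-1+t)I - (J-I) = nI - J.
\]
Its eigenvalues are $n$, with multiplicity $n-t-1$, and $n-(n-t)=t$, with multiplicity one. Hence
\[
\det L_0 = t\,n^{n-t-1},
\]
which is the claimed count.

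\textbf{Edge cases and the main obstacle.} When $t=n$, the forest consists of isolated vertices. The count is then $1 = n\cdot n^{-1}$, and the determinant of the empty matrix is $1$, so the formula holds. The only real care needed is in Step 2: we must justify that spanning trees of $H$ are in bijection with the forests. This holds because a spanning tree of $H$ uses some of the parallel $z$-edges; un-contracting $z$ back to the separate vertices $v_1,\dots,v_t$ gives an acyclic spanning subgraph with exactly $t$ components, each containing exactly one $v_i$, and the reverse direction is clear.
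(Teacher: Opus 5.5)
Your proof is correct. The paper does not prove this statement at all: it quotes Cayley's forest formula from the literature (citing Tak\'acs) and uses only the cases $t=1,2,3$, in Lemmas~\ref{lem:tau} and~\ref{lem:momentG3}. Your matrix-tree argument is therefore a self-contained proof where the paper has none.

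The bijection you flag as the main obstacle is handled most cleanly by combining your Steps 1 and 2. Spanning trees of $G^+$ that contain every edge of the star at $s$ correspond bijectively to spanning trees of the contraction of $G^+$ along that star. That contraction is exactly $H$, with the parallel $z$--$u$ edges labelled by the $v_i$ they came from and loops discarded. So no separate un-contraction argument is needed.

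Your reduced Laplacian $nI-J$ is precisely the principal submatrix of $L(K_n)=nI_n-J_n$ with the rows and columns of $v_1,\dots,v_t$ deleted. In other words, you have proved the principal-minor case of the all-minors matrix-tree theorem and specialized it to $K_n$. This gives more than the cited statement: in any graph, the number of spanning forests with $t$ trees separating prescribed vertices $v_1,\dots,v_t$ is the determinant of the Laplacian with those rows and columns removed. That would also give the forest counts $f_{i,v}$ for $G_3$ in Lemma~\ref{lem:momentG3} directly. The only cost compared with a purely combinatorial proof is that you rely on Kirchhoff's theorem for multigraphs.
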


\begin{lemma}\label{lem:tau}
If $G_3$ is the graph on $k+1$ vertices obtained from the graph $K_k$ by inserting a vertex $w$ adjacent to exactly two vertices, say $u$ and $v$,  of $K_k,$ then
$\tau(G_3)=2k^{k-3}(k+1).$    
\end{lemma}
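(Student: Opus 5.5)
Every spanning tree of $G_3$ contains at least one of the two edges $wu$, $wv$ at the new vertex $w$, since $w$ has degree $2$. I will therefore split the spanning trees of $G_3$ into two classes. In the first class, exactly one of these edges is used; in the second, both are used. The counts in each class will come from Cayley's formula, Theorem~\ref{thm:Cayley}, applied to $K_k$.

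\emph{Exactly one edge at $w$.} Here $w$ is a leaf attached to $u$ or to $v$. Removing $w$ leaves an arbitrary spanning tree of $K_k$. By Theorem~\ref{thm:Cayley} with $t=1$ there are $k^{k-2}$ such trees, so this class contributes $2k^{k-2}$.

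\emph{Both edges at $w$.} The path $u\,w\,v$ lies in the tree. Deleting $w$ and its two edges leaves a spanning forest of $K_k$ with exactly two trees, one containing $u$ and the other containing $v$. Conversely, each such $2$-forest yields exactly one spanning tree of $G_3$ when $w$ is reattached to $u$ and $v$, and the result is acyclic precisely because $u$ and $v$ lie in different components. So this map is a bijection. Theorem~\ref{thm:Cayley} with $t=2$ gives $2k^{k-3}$ such forests.

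Adding the two classes gives
\[
\tau(G_3)=2k^{k-2}+2k^{k-3}=2k^{k-3}(k+1),
\]
as claimed. The only step that needs care is checking that the second case really is a bijection, i.e.\ that acyclicity of the reattached tree corresponds exactly to $u$ and $v$ being separated in the forest. The rest is direct application of Cayley's formula.

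As a sanity check, take $k=3$. Then $G_3$ is $K_4$ minus one edge, which has $8$ spanning trees, and the formula gives $2\cdot 1\cdot 4=8$.
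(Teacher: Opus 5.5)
Your proof is correct and matches the paper's argument: you split the spanning trees by whether $w$ uses one or both of its edges, obtain $2k^{k-2}$ from Cayley's formula with $t=1$, and obtain $2k^{k-3}$ from the bijection with spanning $2$-forests of $K_k$ that separate $u$ and $v$. Your explicit justification of that bijection (acyclicity holds exactly when $u$ and $v$ are separated) and the $k=3$ sanity check are helpful details that the paper leaves implicit.
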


\begin{proof}
    Note that a spanning forest could include both edges $\{w,v\}$ and $\{w,u\}$, or just one of these edges.
In the first case, any spanning tree that contains both of these edges can be constructed from a unique spanning forest of $K_k$ containing exactly two trees, with $u$ in one tree and $v$ in the other. Thus by Theorem~\ref{thm:Cayley}, there are $2k^{k-3}$ such trees. 

In the second case, suppose a spanning tree of $G_3$ contains the edge $\{w,v\}$ but not $\{ w,u\}$. The number of such trees is simply the number of spanning trees of $K_k$, namely $k^{k-2}$, by Theorem~\ref{thm:Cayley}. By symmetry, the same count will be obtained if $G_3$ contains $\{w,u\}$ but not $\{w,v\}.$
Therefore $\tau(G_3)=2k^{k-3}+2k^{k-2}=2k^{k-3}(k+1).$
\end{proof}

\begin{lemma}\label{lem:momentG3}
    If $G_3$ is the graph on $k+1$ vertices obtained the graph $K_k$ by inserting a vertex $w$ adjacent to exactly two vertices, say $u$ and $v$,  of $K_k.$ Then
    $$\mu_{G_3}(v)=
    \frac{4k^3-3k^2+3k+6}{2k(k+1)}.
    $$
\end{lemma}

\begin{proof}
Focusing on formula (\ref{eq:momentTree}), we  determine the entries $f_{i,v}$ of $F_v$,  corresponding to vertex $v_i$.

Suppose $v_i=u$. Then, in any spanning $2$-tree forest of $G_3$ separating $u$ and $v$, either vertex $w$ is in the tree with vertex $u$ or in the tree with vertex $v$. Hence $f_{i,v}$ is twice the number of spanning $2$-trees of $K_k$ separating $u$ and $v$.  By Theorem~\ref{thm:Cayley}, $f_{i,v}=4k^{k-3}.$

Suppose $v_i=w$. Any spanning $2$-tree forest of $G_2$ which separates $w$ and $v$ could either include edge $\{w,u\}$ or not. In the former case, the number of spanning 2-tree forests of $G_3$ separating $v$ and $w$ is the same as the number of spanning $2$-tree forest of $K_k$ separating $v$ and $u$ (which 
is $2k^{k-3}$ by Theorem~\ref{thm:Cayley}). In the latter case, the number of spanning $2$-tree forests is simply the number of spanning trees of $K_k.$ Therefore $f_{i,v}=k^{k-3}(2+k).$

Suppose $v_i\neq w$ and $v_i\neq u$. A spanning 2-tree forest of $G_3$ separating $v_i$ and $v$ could contain both edges $\{v,w\}$ and $\{w,u\}$, or just one of of these edges. The number of spanning $2$-tree forests containing both edges separating $v_i$ and $v$ will be equal to the number of spanning $3$-tree forests of $G_3$ separating $v_i$, $v$ and $u$ (that is, $3k^{k-4}$, by Theorem~\ref{thm:Cayley}). The number of spanning 2-tree forests of $G_3$ separating $v_i$ and $v$ that includes exactly one of the edges
$\{v,w\}$ or $\{w,u\}$ would be the number of spanning 2-tree forests of $K_k$ separating $v_i$ and $v$ (that is, $2k^{k-3}$ for each edge, by Theorem~\ref{thm:Cayley}). Thus $f_{i,v}=3k^{k-4}+2(2k^{k-3})=k^{k-4}(4k+3).$

We note that the degree of $v_i$ is $k$ if $v_i=u$, the degree is 2 if $v_i=w$ and the degree is $k-1$ for the $k-2$ vertices  $v_i\not\in \{u,w\}$.
Therefore, by (\ref{eq:moment}) and Lemma~\ref{lem:tau}, 
\begin{eqnarray*}
    \mu_G(v)&=&
\frac{1}{2k^{k-3}(k+1)}\left[ 
k(4k^{k-3})+2k^{k-3}(2+k)+(k-2)(k-1)k^{k-4}(4k+3)\right] 
\\[1em]
&=& \frac{4k^3-3k^2+3k+6}{2k(k+1)}.
\end{eqnarray*}  
\end{proof}

\begin{lemma}\label{lem:G3x}
    If $G_{uw}$ is the graph obtained from $G_3$ by inserting a pendent vertex $x$ adjacent to $v$,
    then 
    $$\K(G_{uw})=
\frac{k^5+10k^3-3k^2+2k+18}{k^4+5k^2+6k}.
    $$
\end{lemma}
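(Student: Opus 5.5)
The plan is to recognize $G_{uw}$ as a 1-separation and apply Theorem~\ref{thm:1sep} directly, using the two facts about $G_3$ already established. Adding a pendent vertex $x$ at $v$ means $G_{uw}=G_3\oplus_v K_2$, where $K_2=S_2$ is the single edge $\{v,x\}$. So I take $G_1=G_3$ and $G_2=K_2$ with the cut vertex $v$. The edge counts are
\[
m_1=\binom{k}{2}+2=\frac{k^2-k+4}{2},\qquad m_2=1,\qquad m_1+m_2=\frac{k^2-k+6}{2}.
\]

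The ingredients for $K_2$ come from (\ref{rem:Smoments}) with $n=2$: $\K(K_2)=\tfrac12$ and $\mu_{K_2}(v)=1$. The latter is also immediate, since the only other vertex has degree $1$ and resistance $1$ to $v$. For $G_3$ I invoke Lemma~\ref{lem:kappaG3} for $\K(G_3)$ and Lemma~\ref{lem:momentG3} for $\mu_{G_3}(v)$. Here $v$ is one of the two vertices of $K_k$ adjacent to $w$, which matches the attachment point of $x$ in Figure~\ref{fig:G4etal}. Theorem~\ref{thm:1sep} then gives
\[
\K(G_{uw})=\frac{m_1\bigl(\K(G_3)+1\bigr)+\tfrac12+\mu_{G_3}(v)}{m_1+1}.
\]

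What remains is a rational simplification, which I expect to be the main (purely computational) obstacle. The denominators of the pieces are $k^4+3k^2+4k=k(k+1)(k^2-k+4)$ for $\K(G_3)$ and $2k(k+1)$ for $\mu_{G_3}(v)$. The factor $m_1=\frac{k^2-k+4}{2}$ cancels the $k^2-k+4$ in the first denominator, so the numerator collects over the common denominator $2k(k+1)$. Dividing by $m_1+1=\frac{k^2-k+6}{2}$ then produces the target denominator, since $k(k+1)(k^2-k+6)=k^4+5k^2+6k$.

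As a sanity check, the stated denominator factors exactly this way. So the argument reduces to verifying that the combined numerator equals $k^5+10k^3-3k^2+2k+18$. I would expand the numerator carefully, possibly double-checking small values such as $k=3$ against a direct eigenvalue computation via (\ref{eq:Keig}).
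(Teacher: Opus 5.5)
Your proposal is correct and is essentially the paper's own proof: you write $G_{uw}=G_3\oplus_v K_2$, apply Theorem~\ref{thm:1sep} with $m_1=\binom{k}{2}+2$, $m_2=1$, $\K(K_2)=\tfrac12$, $\mu_{K_2}(v)=1$, and substitute Lemmas~\ref{lem:kappaG3} and~\ref{lem:momentG3}. Your denominator bookkeeping is right, and the combined numerator over $2k(k+1)$ does expand to $k^5+10k^3-3k^2+2k+18$; for example, at $k=3$ both the formula and a direct resistance computation give $85/24$.
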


\begin{proof}
    Noting that $G_{uw}= G_3 \oplus_v \{v,x\}$,
    using the 1-separation 
    Theorem~\ref{thm:1sep},
    $$\K(G_{uw})=\frac{\left[ \binom{k}{2}+2\right]\left[ \K(G_3)+1\right] +1\left[ \frac{1}{2} + \mu_{G_3}(v) \right]}{\binom{k}{2} +2+1}
    $$
since $\K(\{v,x\})=\K(K_2)=\frac{1}{2}$ by (\ref{eq:moment}). 
The result follows from 
Lemma~\ref{lem:kappaG3} and Lemma~\ref{lem:momentG3}.
\end{proof}

\begin{theorem} \label{thm:BraessEdgesAndNot}
If $k\geq 6$ and $G$ is the graph obtained from $K_k$ by inserting two pendent vertices %of degree 1 
to some vertex of $K_k$, then $G$ has a Braess $K_3$, with the property that one edge of the $K_3$ is a Braess edge in $G$ but not the other two edges. 
\end{theorem}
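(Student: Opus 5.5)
The plan is to compare the four Kemeny constants computed in Lemmas~\ref{lem:kappaG}, \ref{lem:kappaG4} and \ref{lem:G3x}, working throughout with the labelling of Figure~\ref{fig:G4etal}. In $G$, the vertex $v$ lies in $K_k$, the vertices $w,x$ are the two pendent vertices at $v$, and $u$ is any other vertex of $K_k$. The set $\{u,w,x\}$ is independent in $G$, since $w,x$ are adjacent only to $v$. Inserting the three edges yields exactly $G_{uwx}$: the vertices $w,x$ become adjacent to each other and to both $u$ and $v$. So we must verify three inequalities for $k\ge 6$:
\begin{enumerate}[(i)]
\item $\K(G_{wx})>\K(G)$, so the edge $\{w,x\}$ is Braess;
\item $\K(G_{uw})<\K(G)$, and by the symmetry swapping $w$ and $x$ also $\K(G_{ux})<\K(G)$, so these edges are not Braess;
\item $\K(G_{uwx})>\K(G)$, so $\{u,w,x\}$ is a Braess $K_3$.
\end{enumerate}

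For (i) I would not compute anything new. The vertices $w,x$ are pendent twins at $v$ in the graph $K_k$ with $m=\binom{k}{2}$ edges. Corollary~\ref{cor:pendent} with $k$ replaced by $2$ requires only $\binom{k}{2}>3/4$, which holds. This immediately gives that $\{w,x\}$ is a Braess edge, consistent with \cite{ciardo2020braess}. Equivalently, one could evaluate Theorem~\ref{thm:kem_diff_k_pendent} with $\alpha_{K_k}(v)=\mu_{K_k}(v)-\K(K_k)=(k-1)^2/k$.

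For (ii) and (iii) I would form the differences of the rational functions in the lemmas. For (ii), $\K(G)-\K(G_{uw})$ is the difference of the expressions from Lemma~\ref{lem:kappaG} and Lemma~\ref{lem:G3x}. The denominators factor as $k(k^2-k+4)$ and $k(k^3+5k+6)=k(k+1)(k^2-k+6)$. After clearing the positive denominator, the difference becomes a single polynomial in $k$, and I would show it is positive for $k\ge 6$. For (iii), I subtract Lemma~\ref{lem:kappaG} from Lemma~\ref{lem:kappaG4} over the common positive denominator $4k(k^2-k+4)(6k^5+6k^4+42k^3+114k^2-23k-70)$. This leaves a numerator polynomial $p(k)$ of degree about $9$, which must be shown positive for $k\ge 6$. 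The standard approach is to substitute $k=j+6$ and check that all coefficients in $j$ are nonnegative, or else to bound the leading terms against the lower ones directly.

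I expect (iii) to be the main obstacle, for two reasons. First, the expression in Lemma~\ref{lem:kappaG4} is messy. Second, the threshold $k\ge6$ suggests that $p(k)$ actually changes sign near $k=5$ or $6$, so crude bounds will not suffice and an exact expansion is needed. If the shifted-polynomial check has a negative coefficient, I would fall back on splitting off the leading term and showing it dominates for $k\ge 7$, and then verifying $k=6$ numerically. The same sign-change concern means I would also double-check (ii) at small $k$ to confirm that it holds throughout $k\ge6$.
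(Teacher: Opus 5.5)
Your plan matches the paper's proof step for step: the edge $\{w,x\}$ is settled by Corollary~\ref{cor:pendent}, and the other two claims by the signs of $\K(G)-\K(G_{uw})$ and $\K(G_{uwx})-\K(G)$ computed from Lemmas~\ref{lem:kappaG}, \ref{lem:kappaG4} and \ref{lem:G3x}. One caution for step (iii), which affects the paper's proof equally: the closed form in Lemma~\ref{lem:kappaG4} is incorrect, since the discriminant of the quotient eigenvalues should be $k^4+54k^3-167k^2+256$, not $+156$ (e.g.\ at $k=6$ the true value is $6.65$, while the lemma gives about $6.6425$); the correct value $\K(G_{uwx})=\frac{(k-1)(k-3)}{k}+\frac{k+1}{k+2}+\frac34+\frac{5k^2+3k+4}{2(k^2-k+10)}$ yields
\[
\K(G_{uwx})-\K(G)=\frac{k^6+4k^5-33k^4-120k^3+124k^2-48k+320}{4k(k^2-k+4)(k^3+k^2+8k+20)},
\]
whose numerator $k^3(k^3+4k^2-33k-120)+(124k^2-48k)+320$ is negative at $k=5$ and positive for every $k\ge 6$, so the conclusion and your expected sign change near $k=5,6$ both survive once you subtract this corrected expression instead.
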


\begin{proof}
To establish that $G$ has a Braess $K_3$, it is sufficient to show that the $\K(G_{uwx})>\K(G)$.
By Lemmas~\ref{lem:kappaG4} and \ref{lem:kappaG}, 
$\K(G_{uwx})-\K(G)=$
\[
\frac{(6k^8 + 24k^7 - 204k^6 - 744k^5) + (717k^4 - 93k^3) + (1226k^2 + 88k - 1120)}{4(6k^8 + 60k^6 + 96k^5 + 31k^4 + 409k^3 - 22k^2 - 280k}.
\]
The numerator is partitioned into three summands, each of which is positive for $k\geq 6$ and the denominator is also positive for $k\geq 6$. Thus $G$ has a Braess $K_3$. 

The edge incident to the pendent twins of $G$ is a Braess edge in $G$ by Corollary~\ref{cor:pendent}. 
To show that the two remaining edges of the Braess $K_3$ are not Braess edges in $G$ it is sufficient to show that
$\K(G_{uw})\leq \K(G).$
By Lemma~\ref{lem:kappaG} and Lemma~\ref{lem:G3x},
\[
\K(G)-\K(G_{uw})
=
\frac{3k^4 +2k^3 - 5k^2 - 16k - 24}{k^6 - k^5 + 9k^4 + k^3 + 14k^2 + 24k}.
\]
The denominator is positive for $k>1$ and the numerator is positive for $k>2$. 
\end{proof}

\section{Concluding notes}

In this paper we have explored the phenomena of graphs having a Braess clique. We provided several classes of graphs and constructions that realize a Braess clique. 
We noted that, 
given any fixed $\ell\geq 2$, almost every connected planar labelled graph has a Braess $K_\ell.$  We also observed that there is a nontrivial relationship between Braess cliques of order at least three, and Braess edges.

There are various ways that the accessibility index of a vertex shows up in various formulas and bounds when considering Braess cliques. 
We expect that it is worth exploring further information about the accessibility index and its role in Kemeny's constant in the future. Other work can also be done to explore the amount Kemeny's constant changes when adding an edge or larger clique to families of graphs with large independent sets.

\bigskip 
\noindent 
\textbf{Acknowledgement:}
KVM thanks Nicholas Paiement for initial conversations about   Kemeny's constant with respect to cliques in graphs. 

\appendix
\section{Appendix on stochastic complements}\label{app:stoch_comp}
In this appendix, we briefly outline the notation and results involving stochastic complements for reference when used in Proposition~\ref{prop:acc_quotient}. Stochastic complements are a means by which the stationary vector and mean first passage matrix for an irreducible Markov chain may be found using a divide-and-conquer approach. For further information and results, see \cite{stevesbook} or \cite{kirkland2001divide}.

Let $T$ be the $n\times n$ transition matrix of an irreducible Markov chain, block partitioned as follows:
\[T = \left[\begin{array}{c|c} T_{11} & T_{12} \\ \hline T_{21} & T_{22}\end{array}\right],\]
where $T_{11}$ is an $\ell\times \ell$ matrix and $T_{22}$ is of order $n-\ell$.
The \emph{stochastic complements} corresponding to this partition are the stochastic matrices 
\[S_1 := T_{11} + T_{12}(I-T_{22})^{-1}T_{21};\]
\[S_2 := T_{22} + T_{21}(I-T_{11})^{-1}T_{12},\]
where the orders of the identity matrices above (and in the expressions below) can be deduced from context. Note that $S_1$ and $S_2$ are themselves irreducible stochastic matrices. 

Let the stationary vector $w$ and mean first passage matrix $M$ of $T$ be partitioned conformally,   \[w^\top = \left[\begin{array}{c|c} w_1^\top & w_2^\top \end{array}\right]\]
and \[M = \left[\begin{array}{c|c} M_{11} & M_{12} \\ \hline M_{21} & M_{22}\end{array}\right].\]
 If $u_i$ is the stationary vector of $S_i$, then $w_i = a_iu_i$, where $\begin{bmatrix} a_1 & a_2 \end{bmatrix}$ is the stationary vector of the $2\times 2$ coupling matrix 
\[\left[\begin{array}{cc} u_1^\top T_{11}\mathbbm{1} & u_1^\top T_{12}\mathbbm{1} \\ u_2^\top T_{21} \mathbbm{1} & u_2^\top T_{22} \mathbbm{1}\end{array}\right].\]
Furthermore,  given $\gamma_i = \frac{1}{\mathbbm{1}^\top w_i}$,
\[M_{11} = \gamma_1 M_{S_1} + V_1,\]
\[M_{22} = \gamma_2 M_{S_2} + V_2,\]
where $M_{S_i}$ refers to the mean first passage matrix for the Markov chain represented by $S_i$, and 
\[V_1 = (I-S_1)^\#T_{12}(I-T_{22})^{-1}J_{n-\ell, \ell} -[(I-S_1)^\#T_{12}(I-T_{22})^{-1}J_{n-\ell, \ell}]^\top,\]
\[V_2 = (I-S_2)^\#T_{21}(I-T_{11})^{-1}J_{\ell, n-\ell} -[(I-S_2)^\#T_{21}(I-T_{11})^{-1}J_{\ell, n-\ell}]^\top.\]
Furthermore, 
\[M_{12} = (I-T_{11})^{-1}T_{12}(M_{22} - (M_{22})_{dg}) + (I-T_{11})^{-1}J_{\ell, n-\ell},\]
\[M_{21} = (I-T_{22})^{-1}T_{21}(M_{11} - (M_{11})_{dg}) + (I-T_{22})^{-1}J_{n-\ell, \ell}.\]

%%%%%%%%%%%%%%%%%%%%%%%%%%%%%%%%%%%%%%%%%%%%%%%%%%%%%%%%%%%%%


\begin{thebibliography}{1}
\bibitem{Breen2022Bridges}
Jane Breen, Emanuele Crisostomi, and Sooyeong Kim.
\newblock Kemeny's constant for a graph with bridges.
\newblock {\em Discrete Appl. Math.}, 322:20--35, 2022.

\bibitem{breen2025threshold}
Jane Breen, Sooyeong Kim, Alexander~Low Fung, Amy Mann, Andrei~A Parfeni, and Giovanni Tedesco.
\newblock Threshold graphs, {K}emeny's constant, and related random walk parameters.
\newblock {\em Linear Algebra Appl.}, 709:284--313, 2025.

\bibitem{brouwer2012spectra}
Andries~E. Brouwer and Willem~H. Haemers.
\newblock {\em {Spectra of Graphs}}.
\newblock Universitext. Springer, New York, 2012.

\bibitem{CL}
G.~Chartrand and L.~Lesniak.
\newblock {\em Graphs and Digraphs}.
\newblock Chapman and Hall, 2005.

\bibitem{chebotarev2020hitting}
Pavel Chebotarev and Elena Deza.
\newblock Hitting time quasi-metric and its forest representation.
\newblock {\em Optim. Lett.}, 14(2):291--307, 2020.

\bibitem{ciardo2020braess}
Lorenzo Ciardo.
\newblock The {B}raess' paradox for pendent twins.
\newblock {\em Linear Algebra Appl.}, 590:304--316, 2020.

\bibitem{ciardo2022kemeny}
Lorenzo Ciardo, Geir Dahl, and Steve Kirkland.
\newblock On {K}emeny's constant for trees with fixed order and diameter.
\newblock {\em Linear Multilinear Algebra}, 70(12):2331--2353, 2022.

\bibitem{faught2020resistance}
Nolan Faught, Mark Kempton, and Adam Knudson.
\newblock Resistance distance, {K}irchhoff index, and {K}emeny's constant in flower graphs.
\newblock {\em MATCH Commun. Math. Comput. Chem.}, 86:405--427, 2021.

\bibitem{faught20221}
Nolan Faught, Mark Kempton, and Adam Knudson.
\newblock A 1-separation formula for the graph {K}emeny constant and {B}raess edges.
\newblock {\em J. Math. Chem.}, 60(1):49--69, 2022.

\bibitem{G2016}
Severino~V. Gervacio.
\newblock Resistance distance in complete n-partite graphs.
\newblock {\em Discrete Appl. Math.}, 203:53--61, 2016.

\bibitem{hu2019complete}
Yuxiang Hu and Steve Kirkland.
\newblock Complete multipartite graphs and {B}raess edges.
\newblock {\em Linear Algebra Appl.}, 579:284--301, 2019.

\bibitem{jang2025kemeny}
Jihyeug Jang, Mark Kempton, Sooyeong Kim, Adam Knudson, Neal Madras, and Minho Song.
\newblock Kemeny's constant and enumerating {B}raess edges in trees.
\newblock {\em Linear Multilinear Algebra}, 73(8):1529--1565, 2025.

\bibitem{kemenysnell}
John~G. Kemeny and J.~Laurie Snell.
\newblock {\em Finite {M}arkov {C}hains}.
\newblock The University Series in Undergraduate Mathematics. D. Van Nostrand Co., Inc., Princeton, N.J.-Toronto-London-New York, 1960.

\bibitem{kim2022families}
Sooyeong Kim.
\newblock Families of graphs with twin pendent paths and the {B}raess edge.
\newblock {\em Electron. J. Linear Algebra}, 38:9--31, 2022.

\bibitem{stevesbook}
Stephen~J Kirkland and Michael Neumann.
\newblock {\em Group inverses of M-matrices and their applications}.
\newblock CRC Press Boca Raton, FL, 2013.

\bibitem{kirkland2001divide}
Stephen~J Kirkland, Michael Neumann, and Jianhong Xu.
\newblock A divide and conquer approach to computing the mean first passage matrix for {M}arkov chains via {P}erron complement reductions.
\newblock {\em Numer. Linear Algebra Appl.}, 8(5):287--295, 2001.

\bibitem{kirkland2016randomwalk}
Steve Kirkland.
\newblock Random walk centrality and a partition of {K}emeny's constant.
\newblock {\em Czech. Math. J.}, 66(141):757--775, 2016.

\bibitem{kirkland2016kemeny}
Steve Kirkland and Ze~Zeng.
\newblock Kemeny's constant and an analogue of {B}raess' paradox for trees.
\newblock {\em Electron. J. Linear Algebra}, 31:444--464, 2016.

\bibitem{KD}
R.E. Kooij and J.L. Dubbeldam.
\newblock Kemeny's constant for several families of graphs and real-world networks.
\newblock {\em Discrete Appl. Math.}, 25:96--107, 2020.

\bibitem{takac}
Tak\'acs Lajos.
\newblock On {C}ayley's formula for counting forests.
\newblock {\em J. Comb. Theory, Ser. A}, 53(2):321--323, 1990.

\bibitem{levene2002kemeny}
Mark Levene and George Loizou.
\newblock Kemeny's constant and the random surfer.
\newblock {\em Amer. Math. Monthly}, 109(8):741--745, 2002.

\bibitem{meyer1975role}
Carl~D. Meyer, Jr.
\newblock The role of the group generalized inverse in the theory of finite {M}arkov chains.
\newblock {\em SIAM Rev.}, 17:443--464, 1975.

\bibitem{noh2004random}
Jae~Dong Noh and Heiko Rieger.
\newblock Random walks on complex networks.
\newblock {\em Phys. Rev. Lett.}, 92(11):118701, 2004.

\bibitem{seneta}
Eugene Seneta.
\newblock {\em {Non-negative Matrices and Markov Chains}}.
\newblock Springer-Verlag, New York, 1981.

\bibitem{wang2017kemeny}
Xiangrong Wang, Johan L.~A. Dubbeldam, and Piet Van~Mieghem.
\newblock Kemeny's constant and the effective graph resistance.
\newblock {\em Linear Algebra Appl.}, 535:231--244, 2017.

\end{thebibliography}
\end{document}